\theoremstyle{plain}
\def\theequation{\thesection.\arabic{equation}}
\newtheorem{theorem}[equation]{Theorem}
\newtheorem{conjecture}[equation]{Conjecture}
\newtheorem{proposition}[equation]{Proposition}
\newtheorem{lemma}[equation]{Lemma}
\newtheorem{corollary}[equation]{Corollary}
\theoremstyle{definition}
\newtheorem*{acknowledgements}{Acknowledgements}
\newtheorem{remark}[equation]{Remark}
\newtheorem{definition}[equation]{Definition}
\newtheorem{notation}[equation]{Notation}
\newtheorem{example}[equation]{Example}
\newtheorem{fact}[equation]{Fact}
\def\leftchoice#1#2#3#4{{\def\arraystretch{0.7}
\Bigl\{\!\!\begin{array}{ll}
   \scriptstyle #1,\!\!\!&\scriptstyle #2\cr
   \scriptstyle #3,\!\!\!&\scriptstyle #4\end{array}}}
\def\leftchoicestretch#1#2#3#4#5{{\def\arraystretch{#1}
\Bigl\{\!\!\begin{array}{ll}
   \scriptstyle #2\!\!\!&\scriptstyle #3\cr
   \scriptstyle #4\!\!\!&\scriptstyle #5\end{array}}}
\def\rksel#1#2#3{\rk_{#3}(#1/#2)}
\def\lara{\langle,\rangle}
\def\blangle{\boldsymbol{\langle}}
\def\brangle{\boldsymbol{\rangle}}
\let\iff\Leftrightarrow
\let\lar\longrightarrow
\let\iso\cong
\let\tensor\otimes
\def\beq{$$\begin{array}{llllllllllllllll}}
\def\eeq{\end{array}$$}
\def\beqn{\begin{equation}\begin{array}{llllllllllllllll}}
\def\eeqn{\end{array}\end{equation}}
\font\tencyr=wncyr10
\def\sha{\text{\tencyr Sh}}
\def\tbuildrel#1\over#2{\buildrel\text{\rm\normalsize\smaller[3]#1}\over{#2}}
\def\thincdots{\raise1.3pt\hbox{$\scriptscriptstyle
  \>\cdot\>\cdot\>\cdot\>\cdot\hskip0.3pt$}}
\def\IV{\text{\rm IV}}
\def\IVS{\text{\rm IV$^*$}}
\def\comment{}
\def\endcomment{}
\def\<{\raise0.5pt\hbox{$\,\scriptstyle<\,$}}
\def\bb@symb#1|#2{\expandafter\def\csname #2#1\endcsname{\mathbb{#1}}}
\def\bbsymbols#1#2{\@for\@tmpz:=#2\do{\expandafter\bb@symb\@tmpz|{#1}}}
\def\cal@symb#1|#2{\expandafter\def\csname #2#1\endcsname{\mathcal{#1}}}
\def\calsymbols#1#2{\@for\@tmpz:=#2\do{\expandafter\cal@symb\@tmpz|{#1}}}
\def\dmth@p#1|{\expandafter\let\csname#1\endcsname\relax
  \expandafter\DeclareMathOperator\csname#1\endcsname{#1}}
\def\operators#1{\@for\@tmpz:=#1\do{\expandafter\dmth@p\@tmpz|}}
\DeclareMathOperator\vchar{char}
\def\Qp{\Q_p}
\def\Zp{\Z_p}
\def\triv{{\mathbf 1}}
\def\m{{\mathfrak m}}
\csname@addtoreset\endcsname{equation}{section}
\def\OO#1#2#3{H_{#3}^{\scriptscriptstyle #1/#2}}
\def\neron#1#2{\omega_{v}^\circ}
\def\neronnov#1#2{\omega^\circ}
\def\E{{\mathcal E}}
\def\K{{\mathcal K}}
\def\KK{{\hbox{\tiny$\scriptscriptstyle\mathcal K$}}}
\def\PP{{\hbox{\tiny$\scriptscriptstyle P$}}}
\let\PPP\PP  % don't know how to make smaller
\def\KKP{{\hbox{\tiny$\scriptscriptstyle{\mathcal K}(P)$}}}
\def\KKR{{\hbox{\tiny$\scriptscriptstyle{\mathcal K}(\alpha)$}}}  % \alpha->r
\def\FF{{\mathbb F}}
\def\F{{\mathcal F}}
\def\M{{\mathcal M}}
\def\L{{\mathcal L}}
\def\cH{{\mathcal H}}
\def\XX{{\mathcal X}}
\def\Xp#1#2{\XX_p(#1/#2)}
\def\X#1#2#3{\XX_{#1}(#2/#3)}
\def\IV{\text{\rm IV}}
\def\IVS{\text{\rm IV$^*$}}
\def\citeeq#1{\>\,{\buildrel{#1}\over=}\>\,}
\def\citeeqref#1{{\tbuildrel{{\ref{#1}}}\over=}}
\def\Tau{{\mathbf T}}
\def\Ttp{\Tau_{\Theta,p}}
\def\RC{{\mathcal C}}
\def\p{{\mathfrak p}}
\def\CCbig#1#2{C_{#1/#2}}
\def\CC#1#2{{C_{\hbox{\tiny$\scriptscriptstyle #1/#2$}}}}
\def\Cy{{\rm C}}
\def\Di{{\rm D}}
\def\Sym{{\rm S}}
\def\conjeq{\>\>{\buildrel?\over=}\>\>}
\let\qequal\conjeq
\def\daggerequation#1#2#3{\begingroup      % label, eqno mark, equation
  \def\theequation{#2}
  \begin{equation}
  \label{#1}
    #3
  \end{equation}\endgroup
  \addtocounter{equation}{-1}}
\def\Esmslash{\hskip -0.15mmE\hskip -0.3mm/\hskip -0.3mm}
\def\smminusone{(\hskip -0.15mm\raise0.243em\hbox to 5pt{\hrulefill}\hskip 0.15mm1\hskip -0.15mm)}
\def\rksel#1#2#3{\rk_{#3}#1/#2}
\begin{document}

\comment

\title{Root numbers and parity of ranks of elliptic curves}
\author{Tim and Vladimir Dokchitser}
\dedicatory{to J.W.S. Cassels on the occasion of his 87th birthday}
\date{June 9, 2009}
\address{Robinson College, Cambridge CB3 9AN, United Kingdom}
\email{t.dokchitser@dpmms.cam.ac.uk}
\address{Gonville \& Caius College, Cambridge CB2 1TA, United Kingdom}
\email{v.dokchitser@dpmms.cam.ac.uk}
\subjclass[2000]{Primary 11G05; Secondary 11G07, 11G40}

\begin{abstract}
The purpose of the paper is to complete several
global and local results concerning parity of ranks of elliptic curves.
Primarily, we show that the Shafarevich-Tate conjecture implies
the parity conjecture for all elliptic curves over number fields,
give a formula for local and global root numbers of elliptic curves
and complete the proof of a conjecture of Kramer and Tunnell
in characteristic 0.
The method is to settle the outstanding local formulae by deforming from
local fields to totally real number fields and then using global
parity results.
\end{abstract}

\maketitle

\setcounter{tocdepth}{1}
\tableofcontents

\section{Introduction}

The principal arithmetic invariant of an elliptic curve $E$
defined over a number field $K$ is its Mordell-Weil rank $\rk E/K$,
the rank of the group of rational points modulo torsion.
In view of the Birch--Swinnerton-Dyer conjecture,
its parity should be governed by another arithmetic invariant,
the global root number $w(E/K)=\pm 1$
(the expected sign in the conjectural functional equation for the
$L$-function of $E/K$):

\begin{conjecture}[Parity Conjecture]
$(-1)^{\rk E/K} = w(E/K).$
\end{conjecture}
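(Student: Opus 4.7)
The statement is the full Parity Conjecture, which remains open in general; my plan is the conditional proof advertised in the abstract, showing that finiteness of $\sha$ implies it. The strategy has three stages: reduce Mordell--Weil parity to $p$-Selmer parity via finiteness of $\sha$, reduce the resulting $p$-parity conjecture to a finite list of purely local root-number/Selmer-parity comparisons via representation-theoretic Brauer relations, and finally settle the outstanding local comparisons by deforming from local fields to totally real number fields, where $p$-parity is already known.

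\medskip

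\emph{Reduction to $p$-parity.} If $\sha(E/K)[p^\infty]$ is finite for some prime $p$, then the $\Z_p$-corank of $\Sel_{p^\infty}(E/K)$ equals $\rk E/K$, so $(-1)^{\rk E/K} = (-1)^{\rk_p(E/K)}$ and it suffices to prove the $p$-parity statement $(-1)^{\rk_p(E/K)}=w(E/K)$ for a single convenient prime, which one is free to choose.

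\medskip

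\emph{Global-to-local via Brauer relations.} Using the regulator-constant machinery developed in earlier work of the authors, every Brauer relation $\Theta=\sum n_i\,\Ind_{H_i}^G\triv$ in the rational representation ring of a Galois group $G=\Gal(F/K)$ yields an identity in which both $\prod w(E/K^{H_i})^{n_i}$ and $\prod(-1)^{n_i\rk_p(E/K^{H_i})}$ decompose as products of local factors at the places $v$ of $K$, each depending only on $E_v$ and $\Res_{G_v}\Theta$. Choosing $F/K$ appropriately, the $p$-parity conjecture for $E/K$ reduces to verifying the required local formula at every place. The bulk of these local formulae are already in the literature; what is missing are certain additive-reduction cases in wildly ramified extensions (together with the parallel cases that enter the Kramer--Tunnell conjecture).

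\medskip

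\emph{Local-to-totally-real deformation, and main obstacle.} For each missing local identity, realise the given pair consisting of an elliptic curve over $K_v$ together with its local Galois extension as the completion at one place $w$ of a global elliptic curve $E'$ defined over a totally real number field $F$, with the other places of $F$ of types for which the local comparison is already known. One then applies the $p$-parity conjecture for $E'/F$, available over totally real fields from earlier Dokchitser work built on Hilbert modularity and 2-parity; after cancelling the controlled places, the resulting global identity collapses to precisely the desired local formula at $w$. The crux is this realisation step: producing, for arbitrary prescribed local reduction and inertia data, a totally real $F$ and a global $E'/F$ realising that data at one place with controllable behaviour everywhere else, and with a prime $p$ for which the totally-real $p$-parity input actually applies. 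This is essentially a weak-approximation-in-moduli problem, intertwined with the choice of auxiliary Brauer relation in the previous step, and is where I expect most of the technical effort to be concentrated.
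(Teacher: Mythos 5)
Your proposal captures the paper's overall architecture: convert Mordell--Weil parity to Selmer parity, express Selmer parities via isogeny/Brauer-relation identities as products of local terms, and establish the outstanding local identities by approximating the given local curve and Galois data by a global elliptic curve over a totally real field, where $p$-parity is already known, then cancelling the places one controls. The deformation step (realising local data at one place of a totally real field via Krasner's lemma and weak approximation, while keeping everything else tame) and its use together with Kramer--Tunnell and Cassels-type formulae are exactly the paper's method.

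However, the reduction step as you state it is wrong in a way that matters. You write that ``it suffices to prove the $p$-parity statement \ldots\ for a single convenient prime, which one is free to choose.'' There is no prime $p$ for which $p$-parity is unconditionally available for an arbitrary $E/K$; the paper does not enjoy this freedom. Instead it exploits the particular structure of $G=\Gal(K(E[2])/K)\subset\Sym_3$, and uses \emph{both} $p=2$ and $p=3$ in an intertwined way: when $E$ has a $K$-rational $2$-torsion point one applies $2$-parity via the $2$-isogeny formula (Cassels/\S\ref{sisogroot}); when $[K(E[2]):K]=3$ one descends from the cubic extension where $E$ has full $2$-torsion; and when $G\cong\Sym_3$ one combines the $2$-isogeny $2$-parity over the quadratic and cubic intermediate fields with the $\Sym_3$ Brauer relation, which is a \emph{$3$-parity} statement (\S\ref{stamroot}). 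Correspondingly, the hypothesis needed is finiteness of the $2$- and $3$-primary parts of $\sha\big(E/K(E[2])\big)$, not of $\sha(E/K)$ at a single prime, because the chain of reductions passes through all intermediate fields of $K(E[2])/K$ and each link requires identifying $\rk E$ with a $p^\infty$-Selmer corank over that subfield. Your sketch would work once this is repaired, but as written the choice-of-$p$ step is an actual gap, not a stylistic simplification.
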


The purpose of this paper is to clean up some known results
concerning this conjecture. Primarily, we show that it follows
from finiteness of the Tate-Shafarevich group,
and give an explicit formula for $w(E/K)$
and thus, conjecturally, for the parity of the rank:

\begin{theorem}[=Thm. \ref{main}]
\label{imain}
Let $E$ be an elliptic curve over a number field~$K$, and suppose
$\sha(E/K(E[2]))$ has finite 2- and 3-primary parts. Then
$$
  (-1)^{\rk E/K} = w(E/K).
$$
\end{theorem}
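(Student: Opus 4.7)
My plan is to establish the $2$-parity statement $(-1)^{\rk_2(E/K)} = w(E/K)$, where $\rk_2$ denotes the $\Q_2$-corank of the $2^\infty$-Selmer group. Since $\rk$ and $\rk_2$ differ by twice the $\Q_2$-corank of $\sha_{2^\infty}$, the hypothesis of finiteness of $\sha(E/F)_{2^\infty}$ (with $F = K(E[2])$) identifies the two modulo $2$ over $F$; a standard restriction--corestriction argument on $H^1(\bar K/L, E[2^\infty])$ propagates the $2$- and $3$-primary $\sha$-finiteness from $F$ down to every intermediate subfield---and in particular to $K$---since $[F:K]$ divides $6$. The theorem is therefore equivalent to the $2$-parity identity above.

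Over $F$, the curve has full rational $2$-torsion and hence three rational $2$-isogenies. The $2$-parity conjecture for elliptic curves admitting a rational $2$-isogeny---via classical $2$-descent, the Cassels--Tate pairing, and ideas going back to Monsky---gives $(-1)^{\rk_2(E/F)} = w(E/F)$. The same argument in fact yields $2$-parity over every intermediate subfield $M$ for which $\Gal(F/M)$ contains a transposition in $S_3 \cong \GL_2(\FF_2)$, since such an $M$ still sees a rational $2$-isogeny of $E$.

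Let $G = \Gal(F/K) \le S_3$. The cases $G \in \{1, C_2\}$ are immediate, as $E/K$ then admits a rational $2$-isogeny directly. For $G = C_3$ and $G = S_3$ I would invoke the regulator-constant formalism: to every Brauer relation $\sum n_i\,[G/H_i] = 0$ in $R_{\Q}(G)$ it attaches an identity of the form $\prod_i w(E/F^{H_i})^{n_i} = (-1)^{\sum n_i \rk_2(E/F^{H_i})}$. Applying this to the unique primitive $S_3$-Brauer relation $2\,[S_3/S_3] + [S_3/1] = [S_3/C_3] + 2\,[S_3/C_2]$ couples the parities over $K$ and the quadratic subfield $F^{C_3}$ to those over $F$ and the cubic subfield $F^{C_2}$, the latter two being covered by the previous step.

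The main obstacle is that this $S_3$-relation pairs $K$ only with $F^{C_3}$, and $E/F^{C_3}$ does \emph{not} admit a rational $2$-isogeny (its Galois group over $F$ is the transposition-free $C_3$). Closing the loop requires an independent handle on $F^{C_3}$, and this is where the companion prime $p = 3$ enters the hypothesis: the finiteness of $\sha(E/F)_{3^\infty}$ furnishes a parallel $3$-parity statement on an auxiliary $3$-division subfield, which combined with the local root-number formulae completed earlier in the paper (by the deformation-from-totally-real-fields method flagged in the abstract) forces the necessary compatibility at $F^{C_3}$ and hence at $K$. Matching the local contributions on both sides across all Kodaira reduction types, and for $p = 2$ and $p = 3$ simultaneously, is the most delicate step of the argument.
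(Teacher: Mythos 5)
You have identified the right ingredients (the $2$-isogeny theorem for curves with rational $2$-torsion, the $S_3$-Brauer relation, the bottleneck at the quadratic intermediate field $M=F^{C_3}$), but the $S_3$ step as you state it does not work, and you omit the one idea that actually closes the gap at $M$.

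First, the regulator-constant formalism applied to the $S_3$-relation
$\Theta=\{1\}-2\,C_2-C_3+2\,S_3$ must be run at $p=3$, not $p=2$. The
regulator constants $\RC_\Theta(\triv)=\RC_\Theta(\epsilon)=\RC_\Theta(\rho)=3$
have $\ord_2=0$, so $\Tau_{\Theta,2}$ contains no non-trivial self-dual twist and the
$2$-adic version of the formalism says nothing; the identity
$\prod_i w(E/F^{H_i})^{n_i}=(-1)^{\sum n_i \rk_2(E/F^{H_i})}$ you write down
is simply not what the machine produces. Running it at $p=3$
gives $\triv\oplus\epsilon\oplus\rho\in\Tau_{\Theta,3}$, and via inductivity this yields
$w(E/K)w(E/M)w(E/L)=(-1)^{\rksel EK3+\rksel EM3+\rksel EL3}$ (Theorem~\ref{thms3glo} in the paper). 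This is precisely why the hypothesis demands finiteness of the \emph{$3$-primary} part of $\sha$: it is needed to identify $\rksel E\cdot3$ with $\rk E/\cdot$ for the three fields appearing in this $3$-adic identity, not to manufacture parity on some ``auxiliary $3$-division subfield'' (which plays no role here).

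Second, and independently, you still need $2$-parity (hence rank parity, using the finite $2$-primary $\sha$) over $L=F^{C_2}$ and over $M=F^{C_3}$ separately. Over $L$ this is the $2$-isogeny theorem, as you say. Over $M$ the mechanism is much simpler than what you sketch: $F/M$ is a Galois extension of odd degree $3$, so both $\rk E/M\bmod2$ and $w(E/M)$ coincide with their values over $F$, and over $F$ you already have full $2$-torsion and hence the $2$-isogeny theorem. This odd-degree descent is also what handles the case $G\cong C_3$; invoking Brauer relations there is hopeless, since $C_3$ has no non-trivial Brauer relations. Once these two pieces are in place, dividing the $3$-adic $S_3$ identity by the results over $M$ and $L$ gives the theorem over $K$. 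Without the odd-degree descent at $M$, your argument is circular: the $S_3$-relation alone cannot isolate $K$.
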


\begin{theorem}[=Thm. \ref{gloroot}]
\label{igloroot}
Let $E$ be an elliptic curve over a number field~$K$,
and set $F\!=\!K(E[2])$ and $d\!=\![F\!:\!K]$. Choose a non-trivial
\hbox{2-torsion} point $P$ of $E$, defined over $K$ if $d\le 2$,
and write $E'=E/\{O,P\}$ for the 2-isogenous curve
over $L=K(P)$. Then
\beq
  w(E/K) = \left\{
  \begin{array}{ll}
     (-1)^{\ord_2\frac{\CC EL\CC EF}{\CC{E'}L\CC{E'}F}
     +\ord_3\frac{\CC EF\CC EK^2}{\CC E{K(\sqrt{\Delta_E})}\CC EL^2}}&\>\>\>\text{if }d=6,\cr
     (-1)^{\ord_2\frac{\CC EL}{\CC{E'}L}}&\>\>\>\text{if }d<6.\cr
  \end{array}
  \right.
\eeq
\end{theorem}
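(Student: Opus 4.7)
The plan is to reduce the formula to local identities at each place of $K$, then invoke the local root number formulas developed in the earlier sections of the paper (where the hardest local cases are settled by deformation to totally real number fields). Factor $w(E/K) = \prod_v w(E/K_v)$ and each Tamagawa number $\CC EM = \prod_{w\mid M}\CC E{M_w}$; the right-hand side then splits as a product of local contributions indexed by places of $K$ after grouping places of $L$, $F$, $K(\sqrt{\Delta_E})$ by their underlying place of $K$. It suffices to verify the matching local identity at each $v$.

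For $d \le 2$ (so $L=K$), the required identity
$$w(E/K_v) \;=\; (-1)^{\ord_2(\CC E{K_v}/\CC{E'}{K_v})}$$
is the local 2-isogeny parity formula. This is classical at places of good or multiplicative reduction, and is established in the local sections of this paper for the additive cases --- in particular at places of residue characteristic $2$, where the deformation technique is used.

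For $d = 6$, we use in addition the Brauer relation
$$2\,\triv_{S_3} \,+\, \Ind_{\{e\}}^{S_3}\triv \;=\; \Ind_{C_3}^{S_3}\triv \,+\, 2\,\Ind_{C_2}^{S_3}\triv$$
in $\Gal(F/K) = S_3$, whose fixed fields are $K$, $F$, $K(\sqrt{\Delta_E})$, and $L$ respectively. Applied to root numbers this forces $w(E/F) = w(E/K(\sqrt{\Delta_E}))$; applied locally to Tamagawa numbers it yields exactly the $\ord_3$ factor of the statement, a defect whose $3$-part is controlled by the regulator constant of this relation. The $\ord_2$ factor is then the combined contribution of the $d\le 2$ formula over the two fields $L$ and $F$, in each of which $E$ has the rational 2-torsion point $P$ and hence the 2-isogeny $\phi \colon E \to E'$. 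The case $d = 3$ (subsumed in $d<6$, with $L=F$ cubic Galois) is similar but carries no $\ord_3$ correction, as there is no non-trivial Brauer relation among subfields of a $C_3$-extension.

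The main obstacle is establishing the local identities at primes of residue characteristic $2$ or $3$, where direct computation of local root numbers and Tamagawa numbers is impeded by wild ramification. This is precisely where the paper's deformation method intervenes: given $E/K_v$, one constructs a global elliptic curve over a totally real number field having $E/K_v$ as one local completion and prescribed tame behaviour elsewhere, invokes the corresponding global parity result for that curve, and descends the resulting identity to the desired local statement at $v$.
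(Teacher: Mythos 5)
Your proposal takes a more local route than the paper's actual proof, and in doing so introduces a genuine error. The paper's proof of this theorem is a short, purely global argument: for $d \le 2$ it simply invokes Corollary~\ref{isogroot23} (the global 2-isogeny parity theorem) over $K$; for $d = 3$ it uses $w(E/K) = w(E/L)$ for the cubic extension $L = F$ (invariance of the root number in odd Galois extensions) and then applies the $d=1$ case over $L$; for $d = 6$ it applies Corollary~\ref{isogroot23} over $L$ and $F$ to identify the $\ord_2$ term with $w(E/L)w(E/F)$, applies Theorem~\ref{thms3glo} to identify the $\ord_3$ term with $w(E/L)w(E/F)w(E/K)$, and multiplies the two so the $w(E/L)^2 w(E/F)^2$ drops out. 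No place-by-place comparison is needed at this stage, because the local-to-global and deformation arguments are already packaged into those two global theorems.

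The concrete gap in your version is the claimed local identity for $d \le 2$, namely $w(E/K_v) = (-1)^{\ord_2(\CC E{K_v}/\CC{E'}{K_v})}$. This is \emph{false} as a local statement. The correct local 2-isogeny formula (Conjecture~\ref{isogrootconj}, proved in Theorem~\ref{isogroot23thm}) reads $w(E/\K) = \sigma_\phi(E/\K)\cdot h_\K$, where $h_\K$ is the Hilbert-symbol factor $(a,-b)_\K(-2a,a^2-4b)_\K$ (or $(-2,-b)_\K$ if $a=0$), and $h_\K \ne 1$ in general even at places of residue characteristic $\ne 2$. The two sides of your identity only agree after taking the product over all places $v$ of $K$, where $\prod_v h_{K_v} = 1$ by the product formula for Hilbert symbols. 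So your ``matching local identity at each $v$'' does not hold at individual places; the cancellation is global. The same issue propagates into your treatment of the $d=6$ case, where you would need to track these Hilbert symbols over the towers $L$ and $F$ and show their combined global product is $1$. The remedy is exactly the paper's: work with the global Corollary~\ref{isogroot23} (where the product formula has already been applied) rather than trying to match contributions place-by-place. Your identification of the key ingredients --- the 2-isogeny parity formula, the $S_3$ Brauer relation $\{1\} - 2C_2 - C_3 + 2G$, and the deformation-to-totally-real method for the wild local cases --- is correct, but those are the ingredients for the earlier Theorems~\ref{isogroot23thm} and~\ref{thms3loc}; the present theorem just assembles their global consequences.
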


\noindent
Here $\Delta_E$ is the discriminant of $E/K$,
and $\CCbig EK$ the product%
\footnote{
  Computational note:
  $\CCbig EK\!=\!\prod_{v} C(E/K_v,\omega)$ for any
  invariant differential $\omega$ for $E/K$. At finite places,
  $C(E/K_v,\omega)\!=\!c_v \cdot |\omega/\neron{E}{K_v}|^{}_v$,
  where $c_v$ is the local Tamagawa number at~$v$ and $\neron{E}{K_v}$ the
  minimal %(N\'eron)
  differential at $v$, and hence can be computed with Tate's algorithm.
  The contributions from $v|\infty$ cancel in the
  $\ord_3$ term when using any $\omega$ for $E/K$.
  In the $\ord_2$ term,
  use an $L$-rational $\omega'$ for $E'$ and $\omega\!=\!\phi^*\omega'$ for $E$,
  where $\phi: E\!\to\! E'$ \hbox{is~the~2-isogeny;}
  then every $v|\infty$ where
  $\phi$ maps $E(L_v)$ onto $E'(L_v)$ contributes $-1$ to the $\ord_2$ term.
}
of the `local fudge factors' and periods in the conjectural Birch--Swinnerton-Dyer
formula for $L(E/K,s)$ at $s=1$, see \S\ref{ssnotation} for the precise
definition.

Theorem \ref{imain} is already known under mild technical restrictions
on $E$ (\cite{Squarity}~Thm. 1.3).
We remove the restrictions by a roundabout
approach that avoids unpleasant local computations at the troublesome
primes above~2 and~3.
This method of dealing with difficult reduction cases
applies to other parity-related local formulae.
In particular, we will use it to derive
a formula for local root numbers (analogous to Theorem \ref{igloroot})
and to prove the remaining case of the Kramer-Tunnell conjecture
for elliptic curves over $p$-adic fields.

Most of these local formulae have a global counterpart
related to Selmer groups.
Recall that
there is a %more tractable
version of the parity conjecture for the \hbox{$p^\infty$-Selmer}
rank $\rksel EKp$, the $\Zp$-corank of the $p^\infty$-Selmer group:

\begin{conjecture}[$p$-Parity Conjecture]
$(-1)^{\rksel EKp} = w(E/K).$
\end{conjecture}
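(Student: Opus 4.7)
The plan is to establish the $p$-parity conjecture in the cases where it can be reduced to Theorem~\ref{imain}, and to outline how to extend the argument to handle the remaining cases.

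The key input is the exact sequence
$$ 0 \to E(K) \otimes \Qp/\Zp \to \Sel_{p^\infty}(E/K) \to \sha(E/K)[p^\infty] \to 0, $$
which gives $\rksel EKp = \rk E/K + \mathrm{corank}_{\Zp}\sha(E/K)[p^\infty]$. Hence, whenever $\sha(E/K)[p^\infty]$ is finite (e.g.\ under the Shafarevich--Tate conjecture), the $p$-parity conjecture follows immediately from the parity conjecture, and thus from Theorem~\ref{imain}, provided $\sha(E/K(E[2]))$ has finite $2$- and $3$-primary parts. This already settles an unconditional form of the conjecture at every $p$ for which one can verify $p$-primary finiteness of Sha.

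To remove the reliance on Sha-finiteness, I would work intrinsically with $\rksel EKp$. The root-number side $w(E/K)$ admits the purely local decomposition of Theorem~\ref{igloroot}. On the Selmer side, the idea is to invoke the machinery of regulator constants and Brauer relations in soluble Galois extensions $F/K$: for a Brauer relation $\Theta$ in $\Gal(F/K)$, the alternating sum of $p^\infty$-Selmer ranks over the intermediate subfields is determined modulo~$2$ by a product of local regulator constants, which in turn factor into local contributions $\CCbig E{F^{H_i}}$. One then matches these local contributions against the local pieces of $w(E/K)$ given by Theorem~\ref{igloroot}, place by place, using the paper's deformation-to-totally-real-fields technique to replace any intractable local identity by a global parity statement; the global input required is the $p$-parity conjecture over totally real fields, which is available through the work of Nekov\'a\v{r}.

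The main obstacle is precisely the matching at places of additive reduction above $p\in\{2,3\}$, where both the root number formula and the regulator constant are genuinely delicate, and where the $\ord_2$ and $\ord_3$ corrections in Theorem~\ref{igloroot} appear. The roundabout method of this paper---constructing a totally real global deformation realising a prescribed local behaviour, and then extracting the desired local identity from global parity---should again be the correct tool, provided the auxiliary global curve can be chosen to (i)~realise the required local Galois representation at the distinguished place, and (ii)~satisfy the Sha-finiteness needed to invoke Nekov\'a\v{r}'s totally real $p$-parity theorem. Arranging both simultaneously, especially in the additive reduction cases at $p=2$, is where I expect the genuinely new difficulties to lie.
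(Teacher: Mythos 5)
The statement you are addressing is an \emph{open conjecture}, not a theorem: the paper does not prove the $p$-parity conjecture in general, and no such proof is known. What the paper actually establishes is a collection of special cases (the $2$-parity conjecture over quadratic extensions, $p$-parity for curves with a $p$-isogeny when $p\in\{2,3\}$, $p$-parity over totally real fields for curves with non-integral $j$-invariant, and $p$-parity for certain Brauer-relation twists) and, separately, Theorem~\ref{imain}, which is a statement about the Mordell--Weil rank assuming finiteness of $\sha(E/K(E[2]))[2^\infty]$ and $[3^\infty]$. So there is no ``paper's own proof'' to compare against. Your first paragraph is correct as far as it goes: the exact sequence $0\to E(K)\otimes\Qp/\Zp\to\Sel_{p^\infty}(E/K)\to\sha(E/K)[p^\infty]\to0$ gives $\rksel EKp=\rk E/K+\mathrm{corank}_{\Zp}\sha(E/K)[p^\infty]$, so under $p^\infty$-finiteness of $\sha(E/K)$ the $p$-parity conjecture reduces to the rank-parity conjecture and Theorem~\ref{imain} applies. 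That is exactly the paper's own remark after stating the conjecture, and it is conditional, not a proof.

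Your second and third paragraphs sketch a program that cannot close the gap, for concrete reasons. First, the regulator-constant machinery controls the parity of $\blangle\tau,\Xp EF\brangle$ only for $\tau$ in the sets $\Ttp$; for a general Galois group $G$ and general prime $p$ the trivial representation need not lie in any $\Ttp$, so this method does not compute $\rksel EKp\bmod2$ for an arbitrary $E/K$---that is precisely why the paper's applications are restricted to $\Sym_3$, $\Di_{2p}$ with $p\equiv3\bmod4$, false Tate curve extensions, and the like. Second, the deformation-to-totally-real-fields technique runs in one direction only: it derives an unknown \emph{local} identity from a \emph{known} global parity theorem applied to an auxiliary curve; it cannot manufacture an unknown global parity statement. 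Third, the global input (Nekov\'a\v r's theorem, and its $p=2$ extension in Theorem~\ref{2parthm}) requires non-integral $j$-invariant over a totally real base, and removing that hypothesis is itself open. Your proposal faithfully reconstructs the conditional equivalence and the scope of the paper's partial results, but the outlined extension does not, and as of this paper cannot, yield the unconditional conjecture.
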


\noindent
It is of course expected that $\rksel EKp=\rk E/K$ for each $p$,
as the difference is the number of copies of $\Q_p/\Z_p$ in the
(supposedly finite) group $\sha(E/K)$.
The cases that we address are
2-parity over quadratic extensions,
$p$-parity for curves with a $p$-isogeny when $p=2$ and 3, and
a slight improvement on~\cite{Tamroot} for $p$-parity results coming
from Brauer relations
and in $p$-adic towers.
See \S\ref{ssother} for precise statements.

\subsection{The method \& the Kramer--Tunnell conjecture}

There are several results that express the parity of a specific Selmer rank
in terms of purely local data of the elliptic curve
(e.g. \cite{CFKS,Squarity,Selfduality,FisA,Kra,MR}).
Now recall that the global root number is defined as the product
over all places of local root numbers,
$$
  w(E/K) = \prod_v w(E/K_v).
$$
Hence, if this `local data'
can be related to the corresponding local root number,
it can be used to prove the parity conjecture for this Selmer rank.

For example, Kramer's theorem \cite{Kra} Thm. 1
asserts that for a quadratic extension $F=K(\sqrt\alpha)$,
$$
  (-1)^{\rksel EF2} = \prod_v
  (-1)^{\dim_{\FF_2}\smash{\frac{E(K_v)}{N E(F_w)}}},
$$
the product taken over all places $v$ of $K$, choosing any $w|v$ of $F$.
Here \hbox{$N: E(F_w)\to E(K_v)$} is the norm map on points.
In \cite{KT} Kramer and Tunnell found the conjectural relation to the root number,
\daggerequation{iktloc}{$*$}
  {(-1)^{\dim_{\FF_2}\smash{\frac{E({K_v})}{N E({F_w})}}} =
  w(E/{K_v})w(E_\alpha/{K_v})(-\Delta_E,\alpha)_{K_v},}%
where
$E_\alpha$ is the quadratic twist of $E$ by $\alpha$, and
$(\cdot,\cdot)$ is the Hilbert symbol.
Granting this relation and taking the product over all $v$, we get
the 2-parity conjecture for $E/F$,
$$
  (-1)^{\rksel EF2} = w(E/F).
$$
(Observe that the Hilbert symbols disappear globally by the product formula.)
Kramer and Tunnell established \eqref{iktloc} in all cases but
the most horrible one, namely when $v|2$, $F_w/K_v$ is ramified
and $E/K_v$ has additive reduction.

We now explain how to prove \eqref{iktloc}
in the remaining cases (see \S\ref{skratun} for details).
Instead of confronting residue characteristic 2, we
use a `global-to-local' \linebreak approach.
The point is that the $p$-parity conjecture can be seen to hold
over totally real fields for all elliptic curves with non-integral
$j$-invariant, by considerations that exploit modularity
(Nekov\'a\v r \cite{NekIV} for odd $p$ and Theorem~\ref{2parthm} for $p=2$).
Therefore we can prove the local formula by reversing the argument
above:
if $K$ and $F$ are totally real and we know that \eqref{iktloc} holds
at all places $v$ but one, it must hold over the remaining place as well.
Because there is a sufficient supply of such fields and curves and
all the terms in \eqref{iktloc} are continuous (as functions of the
coefficients of the curve), it follows that the local formula must
always be true.

To summarise, we obtain:

\begin{theorem}[Kramer-Tunnell conjecture in characteristic 0]
Let $\K$ be a local field of characteristic zero,
$\F=\K(\sqrt\alpha)$ a quadratic extension,
$E/\K$ an elliptic curve and $E_\alpha/\K$ its quadratic twist by~$\alpha$.
Then
$$
  w(E/\K)w(E_\alpha/\K)(-\Delta_E,\alpha)_\K %= \kappa(E,\F/\K).
    = (-1)^{\dim_{\FF_2}\smash{\frac{E(\K)}{N E(\F)}}}.
$$
Here $\Delta_E$ is the discriminant of some model of $E/\K$,
$N: E(\F)\to E(\K)$ the norm map on points, and
$(\,\cdot\,,\,\cdot\,)_\K$ the Hilbert symbol in $\K$.
\end{theorem}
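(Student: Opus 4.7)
The plan is to execute the global-to-local strategy outlined in \S\ref{skratun} of the introduction. By the theorem of Kramer--Tunnell \cite{KT}, the identity $(*)$ is already established in every case except when $\K$ is a finite extension of $\Qp$ with $p=2$, the quadratic extension $\F/\K$ is ramified, and $E/\K$ has additive reduction. The task therefore reduces to proving $(*)$ in this one recalcitrant local configuration.

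The first step is to realise the local data $(\K,\alpha,E)$ as the $v_0$-adic completion of a global triple. Specifically, I would construct a totally real number field $K$ together with a place $v_0$ satisfying $K_{v_0}\cong\K$; an element $a\in K^\times$ approximating $\alpha$ in $\K^\times/\K^{\times 2}$ so that $F=K(\sqrt{a})$ is totally real and the ramification of $F/K$ outside $v_0$ and the archimedean places is concentrated at `harmless' finite places; and an elliptic curve $\E/K$ of non-integral $j$-invariant whose Weierstrass coefficients approximate those of $E$ closely enough at $v_0$ to force $\E/K_{v_0}\cong E/\K$, while the reduction of $\E$ at every other finite place $v$ either has residue characteristic $\neq 2$, splits in $F$, or is good. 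Under these constraints, each place $v\neq v_0$ falls into a regime in which $(*)$ was already handled in \cite{KT}.

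With such an $(K,F,\E)$ in hand, I would invoke the 2-parity conjecture for $\E/F$: since $F$ is totally real and $\E$ has non-integral $j$-invariant, this is covered by Theorem~\ref{2parthm}, yielding $(-1)^{\rksel \E F 2}=w(\E/F)$. Combining with Kramer's formula \cite{Kra}~Thm.\ 1 and the factorisation $w(\E/F)=\prod_v w(\E/K_v)w(\E_a/K_v)$ arising from $\Ind_F^K\triv$, and using the global product formula $\prod_v(-\Delta_\E,a)_{K_v}=1$ to insert the Hilbert symbols, the identity becomes exactly $\prod_v [\text{LHS}/\text{RHS of }(*)\text{ at }v]=1$. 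By construction each factor with $v\neq v_0$ is $1$, so the factor at $v_0$ is also $1$, i.e.\ $(*)$ holds at $v_0$ for $(\E/K_{v_0},a)$. Finally, since all terms in $(*)$ are locally constant in $\alpha\bmod\K^{\times 2}$ and in the coefficients of $E$ (the Hilbert symbol is discrete, and Krasner-type arguments identify $w$ and the norm-index term as locally constant invariants), the identity transfers from the approximating pair back to the original $(\K,\alpha,E)$.

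The main obstacle is the realisation step. One must simultaneously (i) match the arbitrary given local triple at $v_0$, (ii) keep $K$ and $F$ totally real, (iii) arrange non-integral $j$-invariant so that Theorem~\ref{2parthm} applies, and (iv) tame the behaviour at every other finite place so that \cite{KT} already covers $(*)$ there. The last condition is the delicate one, since $\E$ and $F$ inherit ramified primes above $2$ from their global construction; handling these requires either moving them away via weak approximation and auxiliary twists, or ensuring locally that $F/K$ is unramified or $\E$ is of multiplicative reduction at such places. Once this geometric/arithmetic bookkeeping is organised, the rest of the argument is formal.
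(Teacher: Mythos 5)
Your proposal is correct and follows essentially the same route as the paper's proof: reduce to the outstanding case ($v\mid 2$, $\F/\K$ ramified, $E$ additive) via the known cases in \cite{KT}; realise $(\K,\F,E)$ as the completion at one place $v_0$ of a totally real pair $F/K$ and a global curve with non-integral $j$-invariant, arranging that all other places above $2$ are split in $F/K$ (so the local formula is trivial there); apply Theorem~\ref{2parthm} and Kramer's Theorem~\ref{krathm} to conclude the formula at $v_0$; and use local constancy (Proposition~\ref{continuity}, Lemma~\ref{hilbcont}, Remark~\ref{kappacont}) to transfer back to the original data. The only cosmetic difference is that you invoke $2$-parity directly for $\E/F$, whereas the paper applies Theorem~\ref{2parthm} to $E/K$ and $E_\alpha/K$ separately and uses $\rksel EF2 = \rksel EK2 + \rksel{E_\alpha}K2$; these are equivalent.
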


\begin{corollary}[2-parity in quadratic extensions]
Let $E$ be an elliptic curve over a number field $K$.
For every quadratic extension $F/K$,
$$
  (-1)^{\rksel EF2} = w(E/F).
$$
\end{corollary}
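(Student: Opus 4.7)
The plan is simply to combine Kramer's formula for $(-1)^{\rksel EF2}$ with the Kramer--Tunnell theorem just established, and then take a product over all places. Write $F=K(\sqrt\alpha)$ and, for each place $v$ of $K$, pick a place $w$ of $F$ above $v$. Kramer's theorem provides
$$
  (-1)^{\rksel EF2} = \prod_v (-1)^{\dim_{\FF_2}\frac{E(K_v)}{N E(F_w)}}.
$$
Substituting the preceding theorem term-by-term, this becomes
$$
  (-1)^{\rksel EF2} = \prod_v w(E/K_v)\,w(E_\alpha/K_v)\,(-\Delta_E,\alpha)_{K_v}.
$$

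Now I would split this product into three pieces. The Hilbert symbols collapse to $1$ by Hilbert reciprocity (the product formula $\prod_v(\beta,\gamma)_{K_v}=1$ for $\beta,\gamma\in K^*$), so the $(-\Delta_E,\alpha)_{K_v}$ factors contribute nothing. The remaining two products yield the global root numbers $w(E/K)$ and $w(E_\alpha/K)$, by the very definition of $w(\cdot)$ as the product of its local constituents.

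Finally, I would invoke the standard inductive relation $w(E/F)=w(E/K)\,w(E_\alpha/K)$ for the quadratic extension $F=K(\sqrt\alpha)$. This is immediate from the factorisation $L(E/F,s)=L(E/K,s)\,L(E_\alpha/K,s)$, or equivalently, on the Galois side, from $\Ind_{F/K}\triv=\triv\oplus\chi_\alpha$ together with the inductivity of local epsilon factors. Combining everything gives $(-1)^{\rksel EF2}=w(E/F)$.

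There is no real obstacle left at the corollary stage: all the arithmetic difficulty has been absorbed into the Kramer--Tunnell theorem, whose proof (by the global-to-local deformation through totally real fields outlined in \S\ref{skratun}) is the heart of this part of the paper. Given that theorem, the corollary reduces to bookkeeping: Kramer's local expression for the 2-Selmer parity, Hilbert reciprocity, and inductivity of root numbers in quadratic extensions.
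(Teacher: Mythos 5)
Your proof is correct and is essentially identical to the paper's argument (which is given for Corollary \ref{kratuncor} and then cited verbatim for Corollary \ref{corkratun}): Kramer's formula, term-by-term substitution via the Kramer--Tunnell relation, Hilbert reciprocity to kill the symbols, and inductivity $w(E/F)=w(E/K)w(E_\alpha/K)$. The only cosmetic differences are that the paper phrases the twist as $w(E/K_v,\chi_v)$ rather than $w(E_\alpha/K_v)$, and it explicitly notes that the local relation also holds at split places (where $F_w=K_v$), a small point you leave implicit.
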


\subsection{Other parity results}
\label{ssother}

The following two theorems are slight extensions of known results
for general $p$ (see \S\ref{stotreal} and \S\ref{sisogroot}).

\begin{theorem}[see Thm. \ref{2parthm}]
Let $K$ be a totally real field, and $E/K$ an elliptic curve with
non-integral $j$-invariant. Then the $2$-parity conjecture holds for $E/K$.
\end{theorem}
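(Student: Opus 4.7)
The strategy is to combine modularity of $E/K$ with a $2$-descent over $F=K(E[2])$.

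First, since $j(E)$ is non-integral, $E$ has potentially multiplicative reduction at some finite place of $K$. For elliptic curves over totally real fields with such a place, modularity is known unconditionally by Skinner--Wiles and subsequent modularity-lifting work, supplying the automorphic $L$-function and the analytic tools needed on the root-number side.

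Over $F$, $E$ has a rational $2$-torsion point and hence a rational $2$-isogeny $\phi:E\to E'$. The $2$-parity conjecture for $E/F$ can then be established via a Cassels-type exact sequence for the $\phi$- and $\hat\phi$-Selmer groups, combined with a matching local formula for $w(E/F)w(E'/F)$ at each place; this is the authors' earlier isogeny-based parity argument specialised at $p=2$, and works cleanly because over $F$ all of $E[2]$ is rational.

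To descend from $F$ to $K$, factor $G=\Gal(F/K)\subseteq S_3$ into its order-$3$ and order-$2$ parts. The order-$3$ part is automatic modulo $2$: since $|C_3|$ is invertible in $\FF_2$, the $\FF_2[C_3]$-module $\Sel_2(E/F)$ decomposes into a fixed part plus an even-dimensional non-trivial isotypic piece, while $w(E\otimes\chi/K)w(E\otimes\bar\chi/K)=1$ for any primitive order-$3$ character $\chi$ by complex conjugation of root numbers, so both sides of the $2$-parity conjecture are preserved along the cubic subextension. For the order-$2$ part, invoke a Brauer relation; in the $S_3$ case the identity
$$2[G/G]+[G/1]=[G/C_3]+2[G/C_2]$$
expresses $(-1)^{\rksel EK 2}\cdot w(E/K)^{-1}$ as a product of analogous ratios over totally real intermediate subfields, each of which is either $K$, $K(P)$, or $K(\sqrt{\Delta_E})$, and has already been handled.

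The main obstacle is controlling the local contributions in the Brauer-relation descent at primes of bad reduction, particularly above $2$, where the local root-number formulae are most delicate. Matching the arithmetic side (Selmer rank parities and their behaviour along a Brauer relation) with the analytic side (the product $\prod_v w(E/K_v)$) requires precisely the local formulae relating $w(E/K_v)$ to the Tamagawa-type quantities $\CCbig EK$ that the rest of the paper develops, together with the inductivity-in-degree-zero of local root numbers.
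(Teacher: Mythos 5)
Your proposal takes a genuinely different route from the paper, and it has a circularity that cannot be repaired within this framework. You propose to first prove 2-parity over $F=K(E[2])$ via the $2$-isogeny/Cassels argument, then descend to $K$ via an $\Sym_3$ Brauer relation, and you note at the end that the "main obstacle" is matching the local root-number and Tamagawa-type terms at primes above $2$, which "the rest of the paper develops." But that is precisely the problem: in the paper, the $2$-isogeny local formula (Theorem \ref{isogroot23thm}) at residue characteristic $2$ with additive reduction, and the $\Sym_3$ local parity formula (Theorem \ref{thms3loc}) at ramified places of residue characteristic $2$ or $3$, are both \emph{proved using} Theorem \ref{2parthm} via the deformation-to-totally-real-fields argument. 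So you cannot invoke them here. Moreover, the Brauer relation $\{1\}-2\Cy_2-\Cy_3+2\Sym_3$ has regulator constant $3$, not $2$; it naturally controls a $3$-Selmer parity, not a $2$-Selmer parity, so it does not descend $2$-parity along the quadratic layer the way you want.

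The paper's actual proof avoids $K(E[2])$ entirely and hinges on two analytic inputs you do not mention. After twisting to arrange a place of genuine multiplicative reduction, and reducing to the modular case via potential modularity plus Solomon induction (Proposition \ref{redtomodular}, not full modularity), one applies Friedberg--Hoffstein to find a quadratic twist $E_\beta$ with analytic rank $\le 1$, where $\beta$ can be chosen split at all bad primes. Zhang's Gross--Zagier/Kolyvagin-type theorem over totally real fields then gives finiteness of $\sha(E_\beta)$ and equality of algebraic and analytic rank for $E_\beta/K$, hence $2$-parity for $E_\beta/K$ for free. Finally, since $K(\sqrt\beta)/K$ is split above $2$ at the problematic primes, the \emph{unconditional} cases of Kramer--Tunnell (Corollary \ref{kratuncor}) give $2$-parity for $E/K(\sqrt\beta)$, and subtracting the $E_\beta$ contribution gives $2$-parity for $E/K$. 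The point is that the hard local computations at $v\mid 2$ are never confronted: the auxiliary twist is engineered so that only the known cases of Kramer--Tunnell are needed. This anchoring step (analytic rank $\le 1$ via Friedberg--Hoffstein and Zhang) is the missing idea in your proposal; without it there is nothing to bootstrap from.
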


\begin{theorem}[=Cor. \ref{isogroot23}]
Let $E$ be an elliptic curve over a number field~$K$ that
admits a $K$-rational isogeny of degree $p=2$ or 3. Then
%? `Number of first descents' under two-isogeny. `A dream to Cassels'
$$
  (-1)^{\rksel EKp} = w(E/K).
$$
\end{theorem}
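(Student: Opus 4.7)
The plan is to apply the global-to-local strategy of \S\ref{skratun} to a purely local statement extracted from $p$-descent. For $p=2$ or $3$, a $K$-rational isogeny $\phi: E\to E'$ of degree $p$ lets one compute $(-1)^{\rksel EKp}$ as a product of purely local invariants attached to the $\phi$- and $\hat\phi$-Selmer groups,
\[
  (-1)^{\rksel EKp} = \prod_v \sigma_v(E/K_v, \phi),
\]
where each $\sigma_v$ depends only on $E/K_v$ and the local isogeny. The corollary then reduces to identifying, at every place,
\[
  \sigma_v(E/K_v,\phi) = w(E/K_v) \cdot \epsilon_v,
\]
with auxiliary factors $\epsilon_v$ whose product over all $v$ is trivial (typically by reciprocity for a Hilbert symbol built from $\phi$).

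At almost all places this local identity is already known. One can check it by direct computation at archimedean places, places of good reduction, multiplicative places, and at additive places of residue characteristic different from $p$; these are precisely the cases settled by the existing isogeny-root-number results that \S\ref{sisogroot} is based on. What remains are additive places $v\mid p$, which for $p=2,3$ are the cases where direct local calculation is most painful, exactly mirroring the obstruction in the Kramer--Tunnell setting.

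To dispose of those places, I would run the same deformation argument as in \S\ref{skratun}. Fix a local field $\K$ of characteristic zero with residue characteristic $p$ and an elliptic curve $\E/\K$ of additive reduction admitting a $\K$-rational $p$-isogeny. Realise $(\E,\K)$ as the completion at some place $v_0$ of a global pair $(E,K)$ in which $K$ is totally real, $E/K$ has non-integral $j$-invariant, and $E/K$ carries a $K$-rational $p$-isogeny extending $\phi$. For such $E/K$ the $p$-parity conjecture is known: for $p=3$ by Nekov\'a\v r \cite{NekIV}, and for $p=2$ by Theorem~\ref{2parthm}. Combining the global equality $\prod_v w(E/K_v) = (-1)^{\rksel EKp}$ with the already-known local identity at every $v\neq v_0$ forces the identity at $v_0$ as well. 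A continuity argument in the coefficients of a Weierstrass model then propagates the identity from this dense supply of isogenous curves to arbitrary local data at $v_0$.

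The main obstacle is the global deformation step: one must realise any additive, $p$-isogenous local datum over a residue-characteristic-$p$ field as the completion at one place of a totally real global curve with non-integral $j$-invariant and a $K$-rational $p$-isogeny. Since $X_0(p)$ has genus zero for $p=2,3$, there is no shortage of $p$-isogenies over number fields, so the work is purely one of weak approximation: arranging simultaneously that $K$ be totally real, that $j(E)$ be non-integral at some place, and that $E/K_{v_0}$ reproduce the prescribed local model; continuity of both $\sigma_{v_0}$ and $w(E/K_{v_0})$ in the Weierstrass coefficients then allows the remaining passage to the limit, including across the boundary of additive reduction.
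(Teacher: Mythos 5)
Your proposal is correct and follows essentially the same route as the paper: Cassels' formula $(-1)^{\rksel EKp} = \prod_v \sigma_\phi(E/K_v)$, the local conjecture relating $\sigma_\phi(E/\K)$ to $w(E/\K)$ up to a Hilbert/Artin symbol whose global product is $1$, the previously known cases of that local identity, and the same deformation-to-a-totally-real-global-curve argument (via explicit genus-zero parametrisations of $p$-isogenies) to settle the remaining residue-characteristic-$p$ cases. The only cosmetic difference is that the paper covers the unknown local cases slightly more carefully (good supersingular as well as additive at $p=2$), but your continuity argument handles all reduction types at the deformed place in any case.
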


Next, we consider twists of elliptic curves by
Artin representations.
Recall that if $F/K$ is a Galois extension, the dual $p^\infty$-Selmer
$$
  \small
  \Xp EF=(\text{Pontryagin dual of the $p^\infty$-Selmer group of } E/F)\>\tensor\Q_p
$$
is a $\Qp$-rational representation of $G=\Gal(F/K)$.
One can compare multiplicities
$\blangle \tau, \Xp EF\brangle$ of $G$-representations in it
to the twisted root numbers $w(E/K,\tau)$; conjecturally,
$$
  \qquad\quad
  (-1)^{\blangle \tau, \Xp EF\brangle} = w(E/K,\tau)
     \qquad\qquad\!\! \text{($p$-parity conjecture for twists)},
$$
for every self-dual complex representation $\tau$ of $G$.
When $\tau=\C[G/H]$, this simply recovers the $p$-parity conjecture for
$E/F^H$.

Our main result concerning twists is somewhat technical
(Corollary \ref{gltamroot}),
but here are a few consequences (see \S\ref{sstamappl}):

\begin{theorem}
For every elliptic curve $E/\Q$ and number field $L\subset \Q(E[3^\infty])$,
$$
  (-1)^{\rksel EL3} = w(E/L).
$$
\end{theorem}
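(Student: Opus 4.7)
The plan is to derive this theorem from Corollary \ref{gltamroot}, applied to the $3$-adic tower $F_\infty := \Q(E[3^\infty])/\Q$. Set $G := \Gal(F_\infty/\Q)$; by Serre's open image theorem, $G$ is an open compact subgroup of $\GL_2(\Z_3)$ in the non-CM case, and of the normalizer of a Cartan subgroup of $\GL_2(\Z_3)$ in the CM case. In either case $G$ is a $3$-adic Lie group, so the tower $F_\infty/\Q$ is of the type to which Corollary \ref{gltamroot} applies.

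For a number field $L \subset F_\infty$, choose a finite Galois extension $F/\Q$ inside $F_\infty$ containing $L$, and set $\bar G = \Gal(F/\Q)$, $H = \Gal(F/L)$, $\tau_L = \Ind_H^{\bar G}\triv = \C[\bar G/H]$. The standard identifications
\[
\rksel EL3 = \blangle \tau_L, \Xp EF\brangle, \qquad w(E/L) = w(E/\Q, \tau_L)
\]
reduce the theorem to the twisted $3$-parity conjecture for each such permutation representation $\tau_L$ arising from the tower.

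I would then invoke Corollary \ref{gltamroot} with $\tau = \tau_L$. Its hypotheses are local conditions on ratios of Tamagawa numbers and periods of $E$ and its $3$-isogenous curves over intermediate fields of $F_\infty$. The key input is that $E$ acquires a rational $3$-torsion point over $\Q(E[3]) \subset F_\infty$, hence a rational $3$-isogeny. Over every subfield of $F_\infty$ containing $\Q(E[3])$, Theorem \ref{igloroot} and Corollary \ref{isogroot23} already express both the $3$-parity of the Selmer rank and the root number cleanly in terms of these Tamagawa/period quantities. A Brauer-relation argument in the finite quotients $\bar G$, using the $3$-isogeny formula as the base case, should then extend the verification to every $\tau_L$.

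The main obstacle will be this Brauer-relation bookkeeping: one must check that the local Tamagawa/period contributions attached to $\tau_L$ by Corollary \ref{gltamroot} cancel for every subgroup $H \subset \bar G$, not only those whose fixed field contains $\Q(E[3])$. The cancellation is governed by a Brauer relation among induced representations, and verifying it requires a case analysis on the possible images of $\Gal(\bar\Q/\Q)$ in $\GL_2(\F_3)$ (Borel, normalizer of a Cartan, and exceptional subgroups). Since $\Q(E[3])$ is a finite extension of $\Q$ and the tower is pro-$3$ above $\Q(E[3])$, this case analysis is finite and the combinatorics are tractable.
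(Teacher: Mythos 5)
There is a genuine gap in your approach, on two counts.

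First, you propose to apply Corollary \ref{gltamroot} directly with $\tau=\tau_L=\C[\bar G/H]$. But the corollary only asserts the twisted parity formula for $\tau\in\Ttp$, and a permutation module $\C[\bar G/H]$ is almost never in $\Ttp$ for any single relation $\Theta$: the set $\Ttp$ is defined by congruences involving regulator constants $\RC_\Theta(\rho)$, and there is no reason these pick out the specific multiplicities in $\C[\bar G/H]$. In the $\Sym_3$ example, for instance, it is $\triv\oplus\epsilon\oplus\rho$ that lies in $\Tau_{\Theta,3}$, not $\C[G/H]$ for an arbitrary $H$. So the ``invoke Corollary \ref{gltamroot} with $\tau=\tau_L$'' step collapses, and the weight of the argument has to be carried by the Brauer/Artin-induction bookkeeping you only sketch.

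Second, you misidentify the base of the induction. You take the base case to be subfields $L\supset\Q(E[3])$, which indeed follow from Corollary \ref{isogroot23}, but these are the easy cases and they do not generate the rest of the tower by any inductive mechanism available here. What is actually needed is $3$-parity over all subfields of $\Q(E[3])$ itself: the propagation result from \cite{Tamroot} (Thm.~4.3, fed by Proposition \ref{propdih}) takes the form ``if $p$-parity holds over all subfields of a Galois $M/\Q$ and $F/M$ is a pro-$p$-extension Galois over $\Q$, then $p$-parity holds over all subfields of $F$.'' With $M=\Q(E[3])$ and $F=\Q(E[3^\infty])$, the hypothesis requires handling subgroups $H\le\Gal(\Q(E[3])/\Q)\subset\GL_2(\FF_3)$ that do not fix a line in $\FF_3^2$ (so $E/L$ has no rational $3$-isogeny) --- e.g.\ $H=Q_8$ or $H=\SL_2(\FF_3)$. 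The paper deals with these via the $p$-parity theorem over abelian extensions of $\Q$ (\cite{Selfduality} Thm.\ 1.2) and the dihedral Proposition \ref{propdih}; your proposal never mentions either, and the $3$-isogeny result alone cannot cover them. Until those ingredients are supplied, the ``case analysis on Borel/Cartan/exceptional images'' remains a plan rather than a proof.
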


\begin{theorem}
%? Also for p=2
Let $p$ be an odd prime, and $m,n\ge 1$ integers.
For every elliptic curve $E/\Q$ and number field
$F\subset \Q(\mu_{p^n},\sqrt[p^n]m)$,
$$
  (-1)^{\rksel EFp} = w(E/F).
$$
\end{theorem}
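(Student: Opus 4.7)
The plan is to reduce the statement to the general twisted-parity result (Corollary~\ref{gltamroot}) via Brauer induction adapted to the structure of the false Tate curve tower. Set $L=\Q(\mu_{p^n},\sqrt[p^n]m)$ and $G=\Gal(L/\Q)$, so $G\cong N\rtimes\Delta$, where $N=\Gal(L/\Q(\mu_{p^n}))$ is a cyclic $p$-group and $\Delta\le(\Z/p^n\Z)^\times$ has order prime to~$p$. Writing $F=L^H$ for a subgroup $H\le G$, the $p$-parity statement for $E/F$ is equivalent to the twisted identity $(-1)^{\blangle\tau,\Xp EL\brangle}=w(E/\Q,\tau)$ for the permutation representation $\tau=\Ind_H^G\triv=\C[G/H]$.

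Next, I would decompose $\tau$ into irreducible constituents. Since $p$ is odd and $|\Delta|$ is coprime to $p$, Clifford theory partitions the irreducibles of $G$ into (a)~characters factoring through the abelian quotient $G\surjects\Delta\cong\Gal(\Q(\mu_{p^n})/\Q)$, and (b)~representations of the form $\Ind_{G_\psi}^G\tilde\psi$, where $\psi$ is a non-trivial character of some non-trivial subgroup of $N$ and $\tilde\psi$ is an extension to its stabiliser $G_\psi$. Because $\tau$ is a permutation representation and hence self-dual, one may pair complex-conjugate summands so that each piece to be handled is self-dual, and then verify the twisted $p$-parity identity constituent by constituent.

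For the type~(a) summands the identity reduces to $p$-parity of $E$ over subfields of the abelian extension $\Q(\mu_{p^n})/\Q$; these cases already follow from Corollary~\ref{gltamroot} applied to the cyclotomic layer, since the Tamagawa hypothesis for abelian $p$-extensions of $\Q$ is vacuous for $E/\Q$ with $p$ odd. For each type~(b) summand, which corresponds to an intermediate Kummer step $\Q(\mu_{p^n},\sqrt[p^n]{m^{p^k}})\subsetneq L$, I would apply Corollary~\ref{gltamroot} to the Brauer relation that cuts out that summand inside a suitable regular representation of $G_\psi$, combined with transitivity of induction.

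The main obstacle is verifying the Tamagawa-style hypothesis of Corollary~\ref{gltamroot} for the type~(b) summands. Concretely, one has to show that the local fudge-factor contributions at the primes of $\Q(\mu_{p^n})$ that ramify in the Kummer part of $L/\Q(\mu_{p^n})$ — only primes above $p$ and primes dividing $m$ — combine into a $p$-th power at each such prime, so that they contribute trivially once the multiplicities in the Brauer relation are accounted for; contributions at all remaining primes cancel automatically from the induced structure. Granting this compatibility, the identity $(-1)^{\rksel EFp}=w(E/F)$ follows by assembling the parity information across all constituents of $\tau$.
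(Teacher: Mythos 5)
Your overall strategy — reducing $p$-parity over $F=L^H$ to the twisted statement for $\tau=\C[G/H]$, decomposing $\tau$, and feeding pieces into Corollary~\ref{gltamroot} — is the right one and is essentially what the paper does. However, there is a genuine gap in the final step, and it stems from a misreading of the hypothesis of Corollary~\ref{gltamroot}.

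The hypothesis there is \emph{representation-theoretic}, not Tamagawa-theoretic: it asks that $\Res_{G_v}\tau\in B_{G_v}$ at the finitely many primes $v\mid 6$ of additive reduction, where $B_{G_v}$ is the subgroup of $R_{G_v}$ generated by permutation representations and representations of the form $\sigma\oplus\sigma^*$. In particular (as noted in the footnote to that corollary) this is automatic whenever $\tau\in\Ttp\cap B_G$. There is nothing to verify about local fudge factors at primes above $p$ or at primes dividing $m$, and the program you describe — showing that contributions at ramified primes of the Kummer layer combine into $p$th powers — is both unnecessary and, in effect, an attempt to redo the local analysis of \cite{Tamroot}, which was deliberately avoided.

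What you are missing is the decisive structural fact about $G=\Gal(L/\Q)$: the self-dual complex irreducible representations of $G$ are exactly $\triv$, the order-$2$ character $\epsilon$ of $\Gal(\Q(\mu_p)/\Q)$, and $\rho_j=\C[G/H_j]\ominus\C[G/H_{j-1}]$ with $H_j=\Gal(L/\Q(\sqrt[p^j]{m}))$, all of which are (virtual) permutation representations. Hence \emph{every} orthogonal representation of $G$ lies in $B_G$, and the hypothesis of Corollary~\ref{gltamroot} holds for free. One then checks that $\triv\oplus\epsilon\oplus\rho_j$ lies in $\Tau_{\Theta,p}$ for the $G$-relation $\Theta=U_j-U_{j-1}-(p-1)H_j+(p-1)H_{j-1}$ (with $U_j=\Gal(L/\Q(\mu_p,\sqrt[p^j]{m}))$), applies the corollary to these pieces, and disposes of the leftover copies of $\triv$ and $\epsilon$ using $p$-parity over $\Q$ for $E$ and for its quadratic twist by $\epsilon$. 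Your Clifford-theoretic decomposition is a more elaborate route to the same list of constituents, but without the $B_G$ observation it leaves you facing a local problem that neither needs to be solved nor would be solvable by the fudge-factor computation you propose.
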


\begin{theorem}
Let $p\equiv3\!\!\mod4$ be a prime number and
$E/\Q$ an elliptic curve.
Suppose $F$ is a $p$-extension of an abelian extension of $\Q$,
Galois over~$\Q$. Then
$$
  (-1)^{\blangle\tau,\Xp EF\brangle} = w(E,\tau)
$$
for every orthogonal representation $\tau$ of $\Gal(F/\Q)$; in other words,
the \hbox{$p$-parity} conjecture holds for all orthogonal twists,
and in particular over all subfields of $F$.
\end{theorem}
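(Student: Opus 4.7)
The strategy is to decompose an arbitrary orthogonal representation $\tau$ of $G=\Gal(F/\Q)$ into pieces whose $p$-parity is either already known (via the Kramer--Tunnell corollary above and the base cases of the $p$-parity conjecture over $\Q$) or follows directly from Corollary \ref{gltamroot}. The two main ingredients are a representation-theoretic reduction using the structure of $G$ and the general inductivity of both Selmer multiplicities and of root numbers.

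Let $H\lhd G$ be the normal $p$-subgroup with abelian quotient $A=G/H$. Since $p$ is odd, $H$ has odd order, and so the only real irreducible character of $H$ is the trivial one. By Clifford theory, every irreducible orthogonal $\Q G$-representation therefore either factors through $A$ (in which case it is a $\pm 1$-valued character of $A$), or is of the form $\Ind_T^G\sigma$ for some proper subgroup $T\subsetneq G$ containing $H$, with $\sigma$ itself an orthogonal representation of $T$ of smaller dimension. Iterating this alternative, one obtains a virtual identity
$$\tau \;=\; \sum_i n_i\,\Ind_{U_i}^G\chi_i,\qquad n_i\in\Z,$$
where each $\chi_i$ is either the trivial or a quadratic character of $U_i\le G$. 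The hypothesis $p\equiv 3\pmod 4$ enters here to control the Schur indicators of intermediate Clifford extensions, ensuring that the decomposition is integral and preserves orthogonality.

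Once $\tau$ is so decomposed, the inductivity of both $\blangle\cdot,\Xp EF\brangle$ and of twisted root numbers reduces the $p$-parity conjecture for $\tau$ to the $p$-parity conjecture for each $\Ind_{U_i}^G\chi_i$. When $\chi_i=\triv$ this is the $p$-parity conjecture for $E/F^{U_i}$, which falls under Corollary \ref{gltamroot}: the hypothesis on $G$ is inherited by every subgroup, so $F^{U_i}/\Q$ is still an abelian-by-$p$ extension to which the same machinery applies. When $\chi_i$ is quadratic, the twist corresponds to a quadratic extension of $F^{U_i}$, and the $p$-parity for the twist is deduced from the 2-parity in quadratic extensions (the Corollary to the Kramer--Tunnell theorem above) together with the principle, valid for orthogonal $\tau$ and $p$ odd, that $\blangle\tau,\Xp EF\brangle \equiv \blangle\tau,\XX_2(E/F)\brangle \pmod 2$; this congruence is a standard consequence of the regulator-constant identities that underlie Corollary \ref{gltamroot}. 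Summing the contributions and using the product formula to cancel the stray Hilbert-symbol terms yields the claimed equality for $\tau$.

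\textbf{Main obstacle.} The delicate step is the representation-theoretic one: showing that $p\equiv 3\pmod 4$ is precisely what is needed to force every orthogonal $\Q G$-representation into an integral $\Z$-linear combination of trivial- and quadratic-character inductions from subgroups of $G$. This requires a careful cohomological analysis of how complex conjugation acts on the $G$-orbits of $\widehat H$, and of the 2-cocycles governing extensions of stabilized $H$-characters to their inertia in $G$; failure of the decomposition in some 2-cocycle class would obstruct the entire argument, so the congruence on $p$ must be used essentially at this point.
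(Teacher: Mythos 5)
The approach here diverges from the paper's and has genuine gaps. The paper's proof is a one-liner: combine \cite{Tamroot} Thm.\ 4.5 (which does the group-theoretic reduction to dihedral extensions) with Proposition~\ref{propdih} and the known $p$-parity over $\Q$. The real content is Proposition~\ref{propdih}: for $G\iso\Di_{2p}$, it applies Corollary~\ref{gltamroot} to the $\Q_p$-rational representation $\triv\oplus\epsilon\oplus\rho$ with $\rho=\bigoplus_i\sigma_i$ (which lies in $\Ttp\cap B_G$), and then passes to a single $\sigma$ using the $\Q_p$-rationality of $\Xp EF$ (giving $\blangle\rho,\Xp EF\brangle=\tfrac{p-1}{2}\blangle\sigma,\Xp EF\brangle$) and the Galois-equivariance of root numbers (giving $w(E,\rho)=w(E,\sigma)^{(p-1)/2}$). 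The hypothesis $p\equiv 3\bmod 4$ enters exactly here: it makes $\tfrac{p-1}{2}$ odd.

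Your central representation-theoretic claim — that every orthogonal $\Q G$-representation is an integral $\Z$-linear combination of inductions of trivial or quadratic characters of subgroups — is false. Already for $G\iso\Di_{2p}$ with $p>3$, the $\Z$-span of such inductions (namely $\Ind_{\{1\}}\triv=\triv\oplus\epsilon\oplus 2\rho$, $\Ind_{\Cy_2}\triv=\triv\oplus\rho$, $\Ind_{\Cy_2}\mathrm{sign}=\epsilon\oplus\rho$, $\Ind_{\Cy_p}\triv=\triv\oplus\epsilon$, $\triv$, $\epsilon$, with $\rho=\sum_i\sigma_i$) is exactly $\Z\triv+\Z\epsilon+\Z\rho$. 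A single irreducible $\sigma_j$ is not in this span when $\tfrac{p-1}{2}>1$, so your iterated Clifford decomposition cannot reach it, and no cohomological refinement controlled by $p\equiv 3\bmod 4$ can save an identity that already fails in $R_G$. Brauer induction would let you write $\sigma_j$ as inductions of linear characters, but those characters have order $p$ and your argument has no handle on them.

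Two further problems. First, the claimed congruence $\blangle\tau,\Xp EF\brangle\equiv\blangle\tau,\XX_2(E/F)\brangle\bmod 2$ is not a known consequence of regulator-constant identities; comparing $p^\infty$- and $2^\infty$-Selmer coranks across different primes would essentially require controlling the divisible parts of $\sha$ at both, and is not available. In particular, the Kramer--Tunnell theorem gives $2$-parity in quadratic extensions, and cannot be imported into $p$-parity by a congruence of this kind. Second, you never use $p\equiv 3\bmod 4$ in the place where it is actually required, namely in bridging from the $\Q_p$-rational $\rho$ (which Corollary~\ref{gltamroot} can see) to the individual 2-dimensional $\sigma$ (which is what an arbitrary orthogonal $\tau$ decomposes into) via the parity of $\tfrac{p-1}{2}$.
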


Finally, we turn to local root numbers of elliptic curves.
The definition of these is not constructive,
but they have been classified in many cases.
For instance, $w(E/K_v)=-1$ when $v$ is Archimedean or when
$E/K_v$ has split multiplicative reduction, and $w(E/K_v)=+1$
when $E/K_v$ has good or non-split multiplicative reduction. Thus,
if $E/K$ is semistable,
$$
  w(E/K) = (-1)^{\#\{v|\infty\text{ in }K\}\>+\>\#\{\text{$v$ split mult. for $E/K$}\}}.
$$
At places of additive reduction,
an explicit classification has been given by
Rohrlich \cite{RohG} for $v\nmid 2,3$,
Halberstadt \cite{Hal} for $K=\Q$,
and Kobayashi \cite{Kob} for~$v|3$.
These express the root number in terms of the standard invariants of $E/K$,
making it easy to compute, as in the semistable case.
When $v|2$, the existing formulae for $w(E/K_v)$ (\cite{Whi, Root2})
are complicated and harder to use, as they rely on epsilon-factors
of Galois characters.

{}From the proof of the global root number formula of Theorem \ref{igloroot}
we will extract a uniform formula for the local root numbers.
It is slightly more cumbersome to state
as it involves all three 2-isogenous curves:

\begin{theorem}[see \S\ref{slocal}]
\label{ithmloc}
Let $E$ be an elliptic curve over a
finite extension~$\K$ of $\Q_p$.
Fix an invariant differential $\omega$ for $E/\K$ and write%
$$
  \OO E\K l =
    \leftchoicestretch{0.85}
      {+1}{\text{\rm if $C(E/\K,\omega)$ has even $l$-adic valuation,}}
      {-1}{\text{\rm if $C(E/\K,\omega)$ has odd $l$-adic valuation.}}
$$
Denote $\F\!=\!\K(E[2])$ and $d\!=\![\F:\K]$.
Let $E', E''$ and $E'''$ be the three curves 2-isogenous to $E$,
defined over suitable intermediate fields $\L', \ldots$  of $\F/\K$;
when $d\!=\!2$ we select $E'$ to be the one defined over $\K$.
Define $\smash{\OO {E'}\F l}$ etc. in the same way, computed with respect to
the differentials on $E', E'', E'''$ that pull back to $\omega$ under the
corresponding 2-isogenies.

If $p=2$, then $(-1)^{[\K:\Q_2]}w(E/\K)$ is given by
$$
\begin{array}{lllllll}
          \OO E\K2 \OO{E'}\K2 \OO{E''}\K2 \OO{E'''}\K2 &&\text{if}& d=1,\cr
          \OO E\K2 \OO{E'}\K2 \OO{E'}\F2 \OO{E''}\F2 &&\text{if}& d=2,\cr
          \OO E\F2 \OO{E'}\F2 &&\text{if}& d=3,\cr
          \OO E{\smash{\L'}}2 \OO{E'}{\smash{\L'}}2 \OO E\F2 \OO{E'}\F2 \OO E{\K(\smash{\sqrt{\Delta_E})}}3 \OO E\F3 &&\text{if}& d=6.\cr
\end{array}
$$
If $p\ne 2$, the same formulae compute $w(E/\K)$.
\end{theorem}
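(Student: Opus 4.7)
My plan is to derive Theorem~\ref{ithmloc} from the global root number formula of Theorem~\ref{igloroot} by the same global-to-local deformation method that handles the Kramer--Tunnell conjecture. Both sides of Theorem~\ref{igloroot} factor placewise: $w(E/K) = \prod_v w(E/K_v)$, and each $\CCbig{\cdot}{\cdot}$ on the right is a product $\prod_v C(\cdot/K_v,\omega)$ of local factors. So the $\ord_l$ of a ratio unfolds as a sum over places, and the global identity is an equality of products of local contributions over places of $K$. The asserted local formula of Theorem~\ref{ithmloc} is precisely the placewise factor of this product.

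The first step is to verify the local identity in the easy cases: Archimedean places, places of good or multiplicative reduction, and places $v \nmid 2$ of additive reduction. Here both sides can be computed from standard invariants---Kodaira type, Tamagawa number, action of $G_{K_v}$ on $E[2]$---using Rohrlich's \cite{RohG} formulae for $v \nmid 2,3$ and Kobayashi's \cite{Kob} for $v|3$. This splits into the four sub-cases $d = 1,2,3,6$ according to the image of the decomposition group in $\Aut(E[2]) \cong \Sym_3$, each verified by a routine case analysis against the known values of $w(E/K_v)$.

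The serious case is $p = 2$ with additive reduction. Here I would embed the given $E/\K$ as the completion at a place $v_0$ of a global elliptic curve $\tilde E$ over a totally real number field $K$, with $K_{v_0} = \K$, chosen so that the 2-torsion decomposition type at $v_0$ matches the given $d$ and so that $\tilde E$ has good, multiplicative, or residue-characteristic-$\ne 2$ additive reduction at every other finite place. Applying Theorem~\ref{igloroot} to $\tilde E$ and dividing out the local identities already established at all $v \ne v_0$ leaves the claimed identity at $v_0$. The sign $(-1)^{[\K:\Q_2]}$ for $p=2$ arises from the Archimedean places, each of which contributes $-1$ to $w(\tilde E/K)$ while contributing to the local $H$-invariant side only through the action of the 2-isogeny on real periods. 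A continuity argument, using that both $w(E/K_v)$ and the local $H$-invariants are locally constant on the $p$-adic moduli of elliptic curves, then extends the formula to all $E/\K$. The main obstacle is this globalisation step: realising each of the four $d$-structures at $v_0$ while simultaneously arranging benign reduction at every other place. This is parallel to, but more delicate than, the corresponding construction in \S\ref{skratun}, because one must also control the 2-torsion field.
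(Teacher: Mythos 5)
Your strategy of factoring Theorem~\ref{igloroot} placewise and re-running a global-to-local deformation does not match the paper's proof, and it contains a gap the paper explicitly flags. The issue is that the right-hand side of Theorem~\ref{igloroot} depends on a choice of 2-torsion point $P$ and on the \emph{global} extension $K(E[2])/K$; its contribution above a given place $v_0$ is therefore not an intrinsic invariant of the local curve $E/K_{v_0}$. Concretely, for global $d\le 2$ the global formula involves only one isogenous curve $E'=E/\{O,P\}$, yet the local statement of Theorem~\ref{ithmloc} for $d=1$ involves all three of $E',E'',E'''$. No amount of deforming or choosing a clever globalisation will make a placewise factor of the $\CCbig{E}{L}/\CCbig{E'}{L}$ ratio equal the symmetric local expression; you would need to show the discrepancy is always an even power, and that is precisely what requires new local input.

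The paper resolves this by first building a manifestly local, choice-independent quantity $\m(E/\K)$ that is symmetric in the three non-trivial 2-torsion points (a product of $\sigma_\PP$ over the $G$-orbits, plus an $\Sym_3$-type term), and then proving Theorem~\ref{locmorloc} \emph{directly from local results already established}: the local 2- and 3-isogeny formula (Theorem~\ref{isogroot23thm}), the local $\Sym_3$ formula (Theorem~\ref{thms3loc}), root-number inductivity/duality/determinant relations, and the purely computational Lemma~\ref{lem-1-1}, which shows the product of the Hilbert symbols from Conjecture~\ref{isogrootconj} over the 2-torsion orbits collapses to $(-1,-1)_\K$. The deformation method does enter, but only upstream, in proving Theorem~\ref{isogroot23thm} and Theorem~\ref{thms3loc}; no new globalisation is needed at the level of Theorem~\ref{ithmloc}. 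Your explanation of the $(-1)^{[\K:\Q_2]}$ factor is also misplaced: it is not inherited from Archimedean places but is intrinsically the local Hilbert symbol $(-1,-1)_\K$, evaluated in Remark~\ref{rem-1-1}; the product formula then forces it to match the Archimedean signs $(-1)^{[\K:\R]}$ globally, but that is a consistency check, not a derivation of the local constant.

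To repair your proposal you would need to replace the placewise factorisation of Theorem~\ref{igloroot} by the symmetrised $\m(E/\K)$, and you would still need Lemma~\ref{lem-1-1} (or an equivalent Hilbert-symbol computation) to identify the residual $(-1,-1)_\K$. At that point you have essentially reconstructed the paper's proof of Theorem~\ref{locmorloc}, with the deformation pushed back where it belongs, into the isogeny and $\Sym_3$ theorems.
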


\begin{acknowledgements}
We would like to thank A. Bartel, D. Cain, B.~D.~Kim and A. J. Scholl
for many helpful discussions.
The first author is supported by a Royal Society
University Research Fellowship.
\end{acknowledgements}

\medskip

\subsection{Notation}
\label{ssnotation}
Number fields are denoted by $K, F,$ etc., and
local fields of characteristic 0 by $\K, \F$, etc.;
$F/K$ and $\F/\K$ are usually Galois extensions.
The product $\prod_v\cdots$ always refers to a product over all
places $v$ of $K$.

\noindent
\begin{tabular}{lll}
\vphantom{$\int^X$}
&$|a|_\K$          & normalised absolute value of $a$ in $\K$.\cr
&$(a,\F/\K)$       & Artin symbol of $a$ in $\F/\K$. In all our cases it will have\cr
&                  & order 1 or 2 in $\Gal(\F/\K)$, and we will regard it as $\pm 1$.\cr
&$(a,b)_\K$        & Hilbert symbol in $\K$ (the same as $(a,\K(\sqrt b)/\K)$).\cr
& $\blangle\cdot,\cdot\brangle$ & usual inner product of (characters of) representations.\\[2pt]
\end{tabular}

\newpage
\noindent
Notation for an elliptic curve $E/K$ or $E/\K$:

\noindent
\begin{tabular}{lll}
\vphantom{$\int^X$}
&$\rk E/K$         & Mordell-Weil rank of $E/K$, i.e. the rank of $E(K)/$torsion. \cr
&$\Xp EK$          & $\Hom_{\scriptscriptstyle{\Z_p}}\!(\varinjlim\Sel_{p^n}(E/K), \Q_p/\Z_p)\tensor\Q_p$, the dual $p^\infty$-Selmer. \cr
&$\rksel EKp$      & $\dim_{\Q_p}\Xp EK$, the $p^{\infty}$-Selmer rank of $E/K$ \cr
&                  & ($=\rk E/K+$ number of copies of $\Q_p/\Z_p$ in $\sha(E/K)$). \cr
&$w(E/k)$          & local root number of $E/k$ for $k$ local, or\cr
&                  & global root number $\prod_{v} w(E/k_v)$ for $k$ a number field.\cr
&$w(E/k,\!\tau)$   & local/global root number for the twist of $E$ by $\tau$ (see \cite{RohG}).\cr
&$C(E/\K,\omega)$ & $=c(E/\K) \!\cdot\! |{\omega}/{\neronnov{E}{\K}}|_\K^{}$ for $\K$ non-Archimedean, where $c$ is \cr
&   & the local Tamagawa number, and $\neronnov{E}{\K}$ a N\'eron differential;\cr
&   & $=\int_{E(\K)} |\omega|$ for $\K=\R$ \ and \ $2\int_{E(\K)} \omega\wedge \bar\omega$ for $\K=\C$.\cr
&   & ($\omega$ is a non-zero invariant differential for $E/\K$.)\cr
&$\CCbig EK$          & $\prod_{v} C(E/K_v,\omega)$ for any global invariant differential $\omega\ne 0$;\cr%\\[3pt]
&   & independent of the choice of $\omega$ by the product formula.\cr
&$E_\alpha$ & quadratic twist of $E$ by $\alpha$.\\[2pt]
\end{tabular}

For the definition and standard facts about root numbers we refer to
\cite{TatN}, \cite{RohG} and the appendix to \cite{Tamroot}. The latter also
contains a summary of basic properties of $\Xp EK$ that we occassionally use.

\section{2-parity over totally real fields}
\label{stotreal}

There is a large supply of elliptic curves
over number fields for which the $p$-parity conjecture is known to hold.
Over $\Q$ it is true
for all elliptic curves (\cite{Squarity} Thm. 1.4), and for odd $p$
for elliptic curves with non-integral $j$-invariant over totally real fields
(\cite{NekIV} Thm. 1).
The purpose of this section is to extend the latter result to $p=2$
(Theorem \ref{2parthm}).
Its proof goes along similar lines to that of Monsky \cite{Mon}
for elliptic curves over $\Q$ and uses potential modularity as in
\cite{NekIV}.

We refer to Wintenberger \cite{Win} for the definition of modularity
for elliptic curves over totally real fields. The two propositions below
are known
and are only included for completeness
(see Taylor's \cite{TayO} proof of Thm. 2.4 and \cite{TayR} proof of Cor. 2.2).

\begin{proposition}
\label{taylor}
Let $F/K$ be a cyclic extension of totally real fields, and $E/K$ an
elliptic curve. If $E/F$ is modular, then so is $E/K$.
\end{proposition}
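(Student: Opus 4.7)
The plan is to invoke cyclic base change for $GL_2$, essentially as in Taylor's argument for \cite{TayO} Thm.~2.4. Write $G=\Gal(F/K)$. By hypothesis $E/F$ corresponds to a cuspidal automorphic representation $\pi$ of $GL_2(\A_F)$, meaning that the compatible system of $\ell$-adic representations attached to $\pi$ agrees with $\rho_{E/F,\ell}=\rho_{E/K,\ell}|_{G_F}$. The first observation is that $\pi$ is $G$-stable: for any $\sigma\in G$ the conjugate $\pi^\sigma$ has Galois representation $\rho_{E/F,\ell}^{\,\sigma}\cong\rho_{E/F,\ell}$ (both extend to the same representation of $G_K$), so at almost all finite places the local components of $\pi$ and $\pi^\sigma$ coincide, and strong multiplicity one (Jacquet--Shalika) gives $\pi^\sigma\cong\pi$.

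The second step is the descent. By the cyclic base change theorem of Langlands and Arthur--Clozel, a $G$-invariant cuspidal automorphic representation of $GL_2(\A_F)$ lies in the image of base change; thus $\pi=\mathrm{BC}_{F/K}(\pi_0)$ for some cuspidal automorphic representation $\pi_0$ of $GL_2(\A_K)$, and $\pi_0$ is unique up to twisting by a character of $\A_K^\times/K^\times N_{F/K}\A_F^\times$, i.e.\ by a character of $G$.

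The final step is to select the correct twist so that $\pi_0$ matches $E/K$. Attach to $\pi_0$ its compatible system $\rho_{\pi_0,\ell}$; by compatibility of base change with local Langlands, the restriction $\rho_{\pi_0,\ell}|_{G_F}$ is isomorphic to $\rho_{E/F,\ell}$. Hence $\rho_{E/K,\ell}$ and $\rho_{\pi_0,\ell}$ are two two-dimensional representations of $G_K$ whose restrictions to $G_F$ agree, so they differ by a character $\chi$ of $G$; replacing $\pi_0$ by $\pi_0\otimes\chi^{-1}$ produces an automorphic representation attached to $E/K$, proving modularity.

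The deep input is the cyclic base change theorem; all the other steps are formal. The only subtlety is disambiguating the twist in the descent, which is handled cleanly by comparing $\ell$-adic representations as above.
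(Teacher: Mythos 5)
Your proof follows essentially the same route as the paper: descend $\pi$ by cyclic base change and then remove the ambiguous character twist by comparing $\ell$-adic Galois representations. The one point the paper spells out that you leave implicit is why two representations of $G_K$ agreeing on $G_F$ must differ by a character of $\Gal(F/K)$ — this needs absolute irreducibility of $V_\ell(E/F)$, which the paper deduces from the fact that an elliptic curve over a totally real field cannot have CM.
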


\begin{proof}
Let $\pi$ be the cuspidal automorphic representation associated to $E/F$.
Pick a generator $\sigma$ of $\Gal(F/K)$.
Then $\pi^\sigma=\pi$ and therefore by Langlands'
base change theorem
 $\pi$ descends to a
cuspidal automorphic representation $\Pi$ over $K$.
Associated to $\Pi$ there is a compatible
system of representations $\rho_{\Pi,\lambda}$ (see \cite{Win}).
Let $\rho=\rho_{\Pi,\lambda}$ be one of these.
Its restriction to $\Gal(\bar F/F)$ agrees with
$V=V_l(E/F)\tensor_{\Q_l}\Q_\lambda$.
%? uses compatibility of Taylor with cyclic base change
Because $E$ cannot have complex multiplication over $F$ (it is totally real),
$V_l(E/F)$ is absolutely irreducible, and therefore the only
representations that restrict to $V$ are
$W=V_l(E/K)\tensor_{\Q_l}\Q_\lambda$ and its twists by characters of
$\Gal(F/K)$.
%? This is Frobenius reciprocity:
Hence $\rho$ and $W$ differ by a 1-dimensional twist, whence $W$ is also
automorphic and $E/K$ is modular.
\end{proof}

\begin{proposition}
\label{redtomodular}
Let $E$ be an elliptic curve over a totally real field $K$, and $p$ a prime
number. If the $p$-parity conjecture for $E$ is true over every totally
real extension of $K$ where $E$ is modular, then it is true for $E/K$.
\end{proposition}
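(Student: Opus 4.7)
\emph{The plan.}  I would combine Taylor's potential modularity theorem with a Brauer‑induction descent, the key input being Proposition~\ref{taylor} to propagate modularity to intermediate fields.

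\emph{Step 1: Potential modularity.}  By Taylor's theorem on potential modularity of elliptic curves over totally real fields, choose a totally real finite Galois extension $F/K$ such that $E/F$ is modular.  Write $G=\Gal(F/K)$.

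\emph{Step 2: Modularity over every solvable intermediate field.}  For every solvable subgroup $H\le G$, the field $F^H$ is totally real, and $F/F^H$ admits a subnormal series with cyclic quotients.  Applying Proposition~\ref{taylor} inductively along this tower shows $E/F^H$ is modular, so by hypothesis the $p$-parity conjecture holds for $E/F^H$.  In particular, for any $1$-dimensional character $\chi$ of $H$ the field $L_\chi=F^{\ker\chi}$ is totally real and $E/L_\chi$ is modular; moreover the same applies to every field between $F^H$ and $L_\chi$.

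\emph{Step 3: Brauer induction.}  By Brauer's theorem, write the trivial character of $G$ as an integer combination
$$
\triv_G \;=\; \sum_i n_i\,\Ind_{H_i}^G \chi_i
$$
with $H_i\le G$ elementary (hence solvable) and $\chi_i$ one‑dimensional.  The Artin formalism for Selmer groups and inductivity of $\varepsilon$-factors give
$$
\rksel EKp \;=\; \sum_i n_i\,\blangle \chi_i,\,\Xp EF|_{H_i}\brangle,
\qquad
w(E/K) \;=\; \prod_i w(E/F^{H_i},\chi_i)^{n_i}.
$$
Thus it suffices to establish the twisted $p$-parity statement $(-1)^{\langle\chi,\Xp EF|_H\rangle}=w(E/F^H,\chi)$ for every solvable $H\le G$ and every one‑dimensional $\chi$ of $H$.

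\emph{Step 4: Reduction of twists to untwisted $p$-parity.}  For fixed $H$ and $\chi$, the character $\chi$ factors through the cyclic quotient $\Gal(L_\chi/F^H)$.  Möbius inversion on the subgroup lattice of this cyclic group expresses $\chi$ as a $\Z$-linear combination of the permutation characters $\Ind_{H'}^H\triv_{H'}$ as $H'$ ranges over subgroups $H\le H'\le \Gal(F/F^H)$ containing $\ker\chi$.  Translating via Frobenius reciprocity and inductivity of root numbers converts the $\chi$-twisted $p$-parity statement over $F^H$ into a $\Z$-linear combination of the ordinary $p$-parity statements $(-1)^{\rksel E{M}p}=w(E/M)$ for the intermediate totally real fields $F^H\subseteq M\subseteq L_\chi$, each of which is granted by Step~2.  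Substituting back into Step~3 yields $(-1)^{\rksel EKp}=w(E/K)$.

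\emph{Main obstacle.}  The analytic input — potential modularity and propagation along solvable extensions — is essentially given by Taylor and Proposition~\ref{taylor}; the substantive bookkeeping is Step~4, where one must express the multiplicity of a non‑trivial $1$-dimensional character in a Selmer rank, and the corresponding twisted root number, purely in terms of untwisted invariants of $E$ over a chain of totally real modular intermediate fields.  The Möbius‑inversion device on a cyclic group is what makes this possible with integer coefficients, so that the final identity survives reduction modulo~$2$.
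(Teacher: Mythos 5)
There is a genuine gap in Step 4. You claim that a one-dimensional character $\chi$ of $H$ that factors through a cyclic quotient $C = H/\ker\chi$ can be written, by M\"obius inversion, as a $\Z$-linear combination of permutation characters $\Ind_{H'}^{H}\triv_{H'}$ with $\ker\chi\le H'\le H$. This is false whenever $|C|>2$. Translated to $C$, the permutation characters $\C[C/C']$ span a subgroup of the representation ring of rank equal to the number of divisors of $|C|$, which is strictly smaller than $|C|$ once $|C|>2$; a faithful character of $C$ lies outside this span. (Concretely, for $C\iso\Cy_3$ the permutation characters are $\triv$ and $\triv\oplus\chi\oplus\chi^2$, and $\chi$ is not a $\Z$-combination of these.) What M\"obius inversion on the divisor lattice actually produces is the \emph{sum} of all characters of a fixed order, not a single one. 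If you try to repair this by using rationality of $\Xp EF$ and Galois invariance of root numbers to work with a full packet of conjugate characters, the packet has $\varphi(|C|)$ members, which is even for $|C|>2$, and the parity statement you need then degenerates to a triviality.

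The paper sidesteps this entirely by invoking Solomon's induction theorem rather than Brauer's: there exist soluble subgroups $H_i\le G$ and integers $n_i$ with $\triv_G=\sum_i n_i\Ind_{H_i}^G\triv_{H_i}$. This expresses $\triv_G$ as a $\Z$-combination of \emph{permutation} characters from soluble subgroups, so no twists ever appear. One then propagates modularity to each $F^{H_i}$ by Proposition~\ref{taylor} along a cyclic chain (as in your Step~2), uses $\rksel E{F^{H}}p=\blangle\Xp EF,\Ind_H^G\triv_H\brangle_G$ and Artin formalism for $L$-functions, and sums the hypothesis over the $K_i=F^{H_i}$ modulo~$2$. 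Your Steps 1--3 are on the right track, but replacing Brauer by Solomon is exactly the idea that removes the obstacle you ran into in Step~4.
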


\begin{proof}
Because $E$ is potentially modular (\cite{Win} Thm. 1), there is a Galois
totally real extension $F/K$ over which $E$ becomes modular.
By Solomon's induction theorem,
there are soluble subgroups $H_i\<G=\Gal(F/K)$ and integers $n_i$, such that
$$
  \triv_G=\sum_i n_i\Ind_{H_i}^G\triv_{H_i},
$$
where $\triv$ denotes the trivial character.

Write $K_i$ for $F^{H_i}$.
Since $\Gal(F/K_i)=H_i$ is soluble, a repeated application
of Proposition \ref{taylor} shows that $E/K_i$ is modular.
By Artin formalism for $L$-functions,
$$
  L(E/K,s)=\prod_i L(E/K_i,s)^{n_i}.
$$
On the other hand,
writing $\XX=\Xp E{F}$, for every $H\<G$ we have
$$
  \rksel E{K^H}p=
  \dim\XX^H=\blangle\XX,\triv_H\brangle_H
    =\blangle\XX,\Ind_H^G\triv_H\brangle_G.
$$
The first equality is a standard fact about the  $p^\infty$-Selmer group,
see e.g. \cite{Squarity}~Lemma 4.14, and the last one
is Frobenius reciprocity.

By assumption, the $p$-parity conjecture holds for $E/K_i$.
Therefore
\beq
  \rksel EKp
  &=&\displaystyle
  \blangle\XX,\triv_G\brangle_G =
  \sum\nolimits_i n_i\rksel E{K_i}p\\[4pt]
  &\equiv&\displaystyle\sum\nolimits_i n_i\ord_{s=1}L(E/K_i,s)
  =\ord_{s=1}L(E/K,s) \mod 2.
\eeq
\end{proof}

\begin{remark}
\label{remmod}
The proof also shows that $L(E/K,s)$ has a meromorphic continuation to $\C$,
with the expected functional equation.
This is the same argument as in \cite{TayR}, proof of Cor. 2.2.
\end{remark}

\begin{theorem}
\label{2parthm}
Let $K$ be a totally real field, and $E/K$ an elliptic curve with
non-integral $j$-invariant. Then the $p$-parity conjecture holds for $E/K$
for every prime $p$.
\end{theorem}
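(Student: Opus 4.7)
The plan is to separate by parity of $p$. For odd $p$, the theorem is already contained in Nekov\'a\v r's \cite{NekIV} Thm.~1, cited at the opening of this section, so there is nothing new to do. All the work goes into the case $p=2$, and this is where I would focus the argument, adapting Monsky's proof \cite{Mon} for $K=\Q$ to the totally real setting via potential modularity.

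First I would apply Proposition~\ref{redtomodular} to reduce to the case where $E/K$ is actually modular; this is legitimate because $E$ is potentially modular by Wintenberger (\cite{Win} Thm.~1), and the proposition is stated exactly so as to permit reduction to modular curves. Let $f$ be the Hilbert cuspidal eigenform of parallel weight~$2$ associated to $E/K$. Modularity gives the analytic continuation and functional equation of $L(E/K,s)$ (cf.\ Remark~\ref{remmod}), with sign equal to $w(E/K)$, so the task becomes comparing $\rksel EK2\bmod2$ with $\mathrm{ord}_{s=1}L(E/K,s)\bmod 2$.

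Next I would run the Monsky-type 2-descent argument for the modular curve $E$. The hypothesis that $j(E)$ is non-integral supplies at least one finite place $v_0$ of $K$ where $E/K$ has (potentially) multiplicative reduction; this is what makes the descent flexible, as in \cite{Mon}. Using a 2-isogeny together with an auxiliary quadratic twist $E_\alpha$, I would write $(-1)^{\rksel EK 2}$ as a product of purely local factors, and separately write $w(E/K)w(E_\alpha/K)$ as a similar product of local root numbers. Kramer's formula \cite{Kra} Thm.~1 expresses the $2$-Selmer parity over $K(\sqrt\alpha)$ as a product of local norm indices on $E$, which is where the local Kramer--Tunnell comparison \eqref{iktloc} enters. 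After taking the product over all $v$, the Hilbert-symbol contributions $(-\Delta_E,\alpha)_{K_v}$ cancel by the product formula, leaving the desired congruence between the $2$-Selmer rank and the order of vanishing. Modularity is what legitimises the use of the global functional equation in this final identification.

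The main obstacle is precisely the places $v\mid 2$ of additive reduction, where the local formula \eqref{iktloc} is not available unconditionally (this unresolved case is the very thing that \S\ref{skratun} later closes by a global-to-local deformation). To circumvent it at this stage of the paper, I would choose $\alpha\in K^\times$ so that the twist $E_\alpha$ has the desired behaviour at every place above~$2$: concretely, either the twisted curve attains semistable or good reduction at each $v\mid 2$, or its reduction type is one already covered by Kramer--Tunnell. The non-integrality of $j(E)$ and the abundance of squares modulo any finite set of places in a totally real field give enough freedom to construct such an $\alpha$, and totally realness of $K(\sqrt\alpha)$ is automatic once one picks $\alpha$ totally positive. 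With this choice, all local contributions are handled by the known cases of \cite{Kra,KT}, and the global identification $(-1)^{\rksel EK2}=w(E/K)$ follows by assembling the pieces.
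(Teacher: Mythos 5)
There is a genuine gap. Your local machinery (Kramer's formula \cite{Kra} Thm.~1 plus the known cases of the Kramer--Tunnell comparison \eqref{iktloc}, multiplied over all places) can only yield the \emph{quadratic-extension} statement $(-1)^{\rksel E{K(\sqrt\alpha)}2}=w(E/K(\sqrt\alpha))$, i.e.\ the parity of the \emph{product} of the Selmer parities of $E/K$ and $E_\alpha/K$ against the product of their root numbers. It does not, by itself, give $2$-parity for $E/K$: to split the quadratic statement you must know the parity conjecture unconditionally for one of the two twists. Your appeal to modularity and the functional equation cannot supply this --- the functional equation only says $(-1)^{\ord_{s=1}L}=w$, and relating $\ord_{s=1}L$ to the Selmer rank is exactly the statement being proved, so the "final identification" in your third paragraph is circular. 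The paper closes this gap with two analytic inputs you never invoke: Friedberg--Hoffstein (\cite{FH} Thm.~B) produces a quadratic twist $E_\beta$ of analytic rank $\le 1$ with $K(\sqrt\beta)/K$ split at all bad primes, and Zhang's theorem (\cite{ZhaH} Thm.~A) then gives finiteness of $\sha(E_\beta)$ and rank equal to analytic rank, hence unconditional $2$-parity for $E_\beta/K$; combined with Corollary \ref{kratuncor} for $E/K(\sqrt\beta)$ this descends to $E/K$. (This is also why the paper first twists so that $E$ has a prime of genuinely multiplicative reduction: Zhang's theorem needs it, and multiplicative reduction persists under the extensions occurring in Proposition \ref{redtomodular}.)

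A secondary problem is your proposed workaround at $v\mid 2$. The unresolved case of \eqref{iktloc} is governed by the reduction of $E$ itself at $v\mid 2$ together with the ramification of $K_v(\sqrt\alpha)/K_v$; arranging that the \emph{twist} $E_\alpha$ be semistable at $v\mid 2$ does not put you in a known case of Theorem \ref{kratunthm} (indeed, if a ramified quadratic twist trivialises additive potentially good reduction, the extension is ramified and $E$ is additive, which is precisely the excluded situation unless the type is \IV{} or \IVS). The correct fix, as in the paper, is to force the auxiliary quadratic extension to be split (in particular unramified) at all bad primes of $E$, which is exactly the extra freedom that Friedberg--Hoffstein provides. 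So even after repairing the choice of $\alpha$, the argument still fails without the nonvanishing-plus-Gross--Zagier/Kolyvagin input described above.
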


\begin{proof}
For odd $p$ this is \cite{NekIV} Thm. 1, so suppose $p=2$.
Let $\p$ be a prime of~$K$ with $\ord_\p j(E)<0$. If $E$ has additive
reduction at $\p$, it becomes multiplicative over some totally real quadratic
extension $K(\sqrt\alpha)$, and the quadratic twist $E_\alpha/K$
has multiplicative reduction at $\p$ as well.
Because
\beq
  w(E/K(\sqrt\alpha))&=&w(E/K)w(E_\alpha/K)&&\text{and} \cr
    \rksel{E}{K(\sqrt\alpha)}2&=&\rksel EK2+\rksel {E_\alpha}K2,
\eeq
it suffices to prove the theorem for
elliptic curves with a prime of multiplicative reduction.
Since multiplicative reduction remains multiplicative in all extensions,
by Proposition \ref{redtomodular} we may also assume that $E$ is modular.

By Friedberg-Hoffstein's theorem \cite{FH} Thm. B,
there is a quadratic extension
$K(\sqrt \beta)$ of $K$ which is split at all bad primes for $E$ and such
that the quadratic twist $E_\beta$ has analytic rank $\le 1$. Since $E_\beta$ also
has a prime of multiplicative reduction, by Zhang's theorem (\cite{ZhaH} Thm. A)
$\sha(E_\beta)$ is finite and the Mordell-Weil rank of $E_\beta/K$ agrees
with its analytic rank;
in particular, the 2-parity conjecture holds for $E_\beta/K$. As it also
holds for $E/K(\sqrt \beta)$ by Kramer-Tunnell's theorem
(see Cor. \ref{kratuncor} below for a precise statement),
it is true for $E/K$.
\end{proof}

\section{Continuity of local invariants}
\label{scontinuity}

We gather a few basic facts concerning realisations of local fields
as completions of totally real fields and continuity of standard invariants.

\begin{lemma}
\label{totreal}
Let $\F/\K/\Q_p$ be finite extensions, with $\F/\K$ Galois.
There is a Galois extension of totally real number fields $F/K$ and a place
$v_0$ of $K$ with a unique place $w_0$ of $F$ above it,
such that $\K_{v_0}\iso \K$ and $\F_{w_0}\iso_\K\F$.
\end{lemma}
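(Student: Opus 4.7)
The plan is a double application of Krasner's lemma together with weak approximation.

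First, realize $\K$ as the completion of a totally real number field. Let $\alpha$ be a primitive element for $\K/\Qp$ with minimal polynomial $g_0\in\Qp[x]$. By weak approximation in $\Q$ (simultaneously at $p$ and at $\infty$), choose a monic $\tilde g\in\Q[x]$ of the same degree as $g_0$ whose coefficients are close to those of $g_0$ at $p$ (well inside the Krasner bound) and close to those of a fixed monic polynomial with distinct real roots at $\infty$. Then $\tilde g$ is irreducible over $\Qp$ (Krasner: any root of $\tilde g$ in an algebraic closure of $\Qp$ generates $\K$) and all its roots are real. Picking $\tilde\alpha\in\R$ a real root, $K_0:=\Q(\tilde\alpha)$ is totally real of degree $[\K:\Qp]$, and the embedding $\tilde\alpha\mapsto\tilde\alpha'$ into $\K$, where $\tilde\alpha'$ is a root of $\tilde g$ in $\K$ close to $\alpha$, identifies $(K_0)_{v_0}\iso\K$ with $v_0$ the unique place above $p$.

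Second, extend this to realize $\F/\K$ as the completion of a Galois extension. The cleanest route is to apply the same construction to $\F^{\mathrm{gal}}/\Qp$, the Galois closure of $\F$ over $\Qp$, rather than to $\F/\K$ directly. Suppose we have produced a totally real Galois $\mathbf F/\Q$ with a unique place $W_0$ above $p$ and $\mathbf F_{W_0}\iso\F^{\mathrm{gal}}$. Identifying $\Gal(\mathbf F/\Q)\iso\Gal(\F^{\mathrm{gal}}/\Qp)$ via $W_0$, set $H:=\Gal(\F^{\mathrm{gal}}/\K)$ and $H':=\Gal(\F^{\mathrm{gal}}/\F)$, and take $K:=\mathbf F^H$, $F:=\mathbf F^{H'}$. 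Since $\F/\K$ is Galois, $H'\normal H$, so $F/K$ is Galois with group $H/H'\iso\Gal(\F/\K)$. Both $K$ and $F$ are totally real as subfields of $\mathbf F$; the completion at the unique place $w_0:=W_0|_F$ above $v_0:=W_0|_K$ gives $F_{w_0}=(\F^{\mathrm{gal}})^{H'}=\F$ and $K_{v_0}=(\F^{\mathrm{gal}})^H=\K$, and the unique-place property follows from that of $W_0$ over $p$.

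The main obstacle is thus the intermediate claim of the second step: for an arbitrary finite Galois $\F^{\mathrm{gal}}/\Qp$ there is a totally real Galois $\mathbf F/\Q$ with $p$ totally inert and $\mathbf F_{W_0}\iso\F^{\mathrm{gal}}$. A naive Krasner/weak-approximation argument, approximating the $\Qp$-minimal polynomial of a primitive element $\theta$ of $\F^{\mathrm{gal}}$ by $\tilde h\in\Q[x]$ close at $p$ and totally split at $\infty$, only yields a simple totally real extension $\mathbf F_0=\Q(\tilde\theta)$ of the correct local shape at $p$, a priori not Galois over $\Q$. Upgrading to Galois requires pinning down the global Galois group of $\tilde h$ to coincide with its decomposition group at $p$, namely $\Gal(\F^{\mathrm{gal}}/\Qp)$. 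This is the standard inverse-Galois-with-prescribed-decomposition problem, solvable because local Galois groups are solvable: one inducts along the derived series of $\Gal(\F^{\mathrm{gal}}/\Qp)$, at each stage realizing a cyclic prescribed-local extension globally via a Grunwald--Wang-style argument while refining the archimedean approximation to preserve total realness.
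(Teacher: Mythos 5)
Your first step (Krasner plus weak approximation to realize $\K$ as $(K_0)_\p$ for a totally real $K_0$) is exactly what the paper does, and your Galois-theoretic bookkeeping in the second step — passing to the local Galois closure, identifying the decomposition group, and taking fixed fields of $H=\Gal(\F^{\mathrm{gal}}/\K)\supseteq H'=\Gal(\F^{\mathrm{gal}}/\F)$ — is sound. The problem is the load-bearing intermediate claim, which you yourself flag: you want a totally real $\mathbf F/\Q$ that is \emph{globally Galois} with $\Gal(\mathbf F/\Q)\iso\Gal(\F^{\mathrm{gal}}/\Qp)$ and with $p$ non-split. That is a genuine inverse-Galois problem with prescribed local data, and your sketch (derived-series induction, Grunwald--Wang at each stage, preserving total realness, handling the special case at the prime $2$) is not a proof; it imports serious machinery into what should be a soft lemma.

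The paper sidesteps all of this with one move. After building $K_0$, take $g\in\K[x]$ monic irreducible with splitting field $\F$ (e.g.\ a minimal polynomial of a primitive element of $\F/\K$), approximate it by a monic $\tilde g\in K_0[x]$ that is $\p$-adically close to $g$ and has only real roots at every Archimedean place of $K_0$, and let $\tilde F$ be the \emph{splitting field} of $\tilde g$ over $K_0$. Then $\tilde F$ is totally real, and by Krasner the completion of $\tilde F$ at any prime $\tilde w_0$ above $\p$ is the splitting field of $g$ over $(K_0)_\p=\K$, namely $\F$. No attempt is made to control $\Gal(\tilde F/K_0)$ --- it may be much larger than $\Gal(\F/\K)$. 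Instead one sets $K:=\tilde F^D$ and $F:=\tilde F$, where $D$ is the decomposition group of $\tilde w_0$ in $\Gal(\tilde F/K_0)$. This one step delivers everything the lemma asks for simultaneously: $F/K$ is Galois with $\Gal(F/K)=D\iso\Gal(\F_{\tilde w_0}/\K)=\Gal(\F/\K)$; the place $v_0:=\tilde w_0|_K$ has $\tilde w_0$ as its unique extension to $F$ (definition of decomposition group); and $K_{v_0}\iso\K$, $F_{\tilde w_0}\iso_\K\F$. The point you miss is that the lemma only requires $F/K$ to be Galois, not $F/\Q$ (or $\mathbf F/\Q$), so one never has to solve an inverse-Galois problem: the decomposition subgroup is forced by Krasner, and $K$ is defined to be whatever makes it the whole group. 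If you replace ``force $\mathbf F/\Q$ to be Galois with the prescribed group'' by ``take the splitting field and pass to the fixed field of a decomposition group,'' your argument closes and becomes the paper's.
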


\begin{proof}
Say $\K=\Q_p[x]/(f)$ with monic $f\in\Q_p[x]$.
If $\tilde f\in \Q[x]$ is monic and $p$-adically close enough
to $f$, it defines the same extension of $\Q_p$ by Krasner's lemma. Now pick
such an $\tilde f$ which is also $\R$-close to any polynomial in $\R[x]$ whose
roots are real (weak approximation), and set $K_0=\Q[x]/(\tilde f)$
and $\p$ to be the prime above $p$ in $K_0$.

Pick a monic irreducible polynomial $g\in\K[x]$ whose splitting field is
$\F$, and again approximate it $\p$-adically with monic $\tilde g\in K_0[x]$
all of whose roots are totally real. Now let $F=K_0[x]/(\tilde g)$ and let $K$ be
the fixed field of $F$ under the decomposition group of some prime $w_0$
above $\p$.
\end{proof}

\begin{lemma}
\label{hilbcont}
For a local field $\K$ of characteristic 0,
the Hilbert symbol $(a,b)_\K$ is a continuous function of $a,b\in \K^*$.
\end{lemma}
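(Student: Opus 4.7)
The Hilbert symbol takes values in the discrete set $\{\pm 1\}$, so continuity is the same as local constancy. The plan is to factor $(\,\cdot\,,\,\cdot\,)_\K$ through the finite quotient $\K^*/(\K^*)^2 \times \K^*/(\K^*)^2$ and then show that $(\K^*)^2$ is open in $\K^*$; these two facts together make the Hilbert symbol locally constant.

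For the first step, recall that by definition $(a,b)_\K = (a,\K(\sqrt b)/\K)$ is the Artin symbol, and the extension $\K(\sqrt b)/\K$ depends only on the class of $b$ in $\K^*/(\K^*)^2$. Since the Hilbert symbol is symmetric, it also depends only on the class of $a$, and hence factors through $\K^*/(\K^*)^2 \times \K^*/(\K^*)^2$ as claimed.

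For the second step, the Archimedean cases are immediate: $(\R^*)^2=\R_{>0}$ is open in $\R^*$, and $(\C^*)^2=\C^*$. In the non-Archimedean case, $\K$ is a finite extension of~$\Q_p$, and I would use Hensel's lemma: for $u\in\K^*$ sufficiently close to~$1$ (concretely, $u\in 1+4\pi\mathcal{O}_\K$ with $\pi$ a uniformiser suffices, because the polynomial $x^2-u$ then has a simple root mod a small enough ideal), $u$ is a square. Thus $(\K^*)^2$ contains an open neighbourhood of~$1$, so it is an open subgroup of $\K^*$.

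Putting the pieces together: the quotient map $\K^*\to\K^*/(\K^*)^2$ is continuous with discrete (in fact finite) target, and the Hilbert symbol is a function on the discrete product. Hence $(a,b)\mapsto (a,b)_\K$ is continuous on $\K^*\times\K^*$. The only step requiring any work is the openness of $(\K^*)^2$ in the $p$-adic case; everything else is formal, so this is where I would expect to spend the small amount of effort needed.
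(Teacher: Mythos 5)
Your proof is correct, and it takes a slightly different (and arguably more elementary) route than the paper. The paper fixes $(a_0,b_0)$, sets $\F=\K(\sqrt{a_0})$, and then argues separately in the two variables: for $a$ it uses that $\K(\sqrt a)\cong\F$ for $a$ near $a_0$, while for $b$ it uses that the norm subgroup $N_{\F/\K}\F^*$ is open in $\K^*$ and that the Hilbert symbol is constant on cosets of this group. Your argument instead observes that the symbol descends to the finite group $\K^*/(\K^*)^2\times\K^*/(\K^*)^2$ and that $(\K^*)^2$ is open (via Hensel's lemma in the non-Archimedean case), treating the two variables symmetrically. Since $(\K^*)^2\subset N_{\F/\K}\F^*$, your openness-of-squares fact in particular implies the openness-of-norms fact the paper uses, so you need somewhat less: you avoid any appeal to norm groups or local class field theory, at the cost of needing symmetry of the Hilbert symbol (a standard fact). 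Both proofs are short and valid; yours is a touch cleaner for a reader who does not want to invoke properties of norm subgroups.
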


\begin{proof}
Fix $a_0,b_0\in \K^*$, and set $\F=\K(\sqrt{a_0})$. Then
$\K(\sqrt{a})\iso \F$ for $a$ in an open neighbourhood $V$ of $a_0$ in $\K^*$.
The subgroup of norms $U=N_{\F/\K}\F^*$ is open in $\K^*$, and
$(a,b)_\K=(a_0,b_0)_\K$ for $a\in V, b\in b_0U$.%, whence the claim.
\end{proof}

\begin{proposition}
\label{continuity}
Let $\K$ be a finite extension of $\Q_p$ and
$E/\K$ an elliptic curve in Weierstrass form,
$$
  E: y^2 + a_1 xy + a_3y = x^3 +a_2 x^2 + a_4 x + a_6, \qquad a_i\in \K.
$$
There is an $\epsilon>0$ such that changing the $a_i$ to any $a'_i$
with $|a_i-a'_i|_\K<\epsilon$ does not change the conductor,
minimal discriminant, Tamagawa number, $C(E,\frac{dx}{2y+a_1x+a_3})$, the root number of $E$
and $T_l E$ as a $\Gal(\bar \K/\K)$-module for any given $l\ne p$.
\end{proposition}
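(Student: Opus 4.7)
The plan is to reduce every invariant in the list to the output of a finite procedure whose inputs are (i) $v_\K$ of explicit polynomials in the Weierstrass coefficients $a_i$, (ii) factorisations of polynomials in the $a_i$ modulo some fixed power of $\pi$, and (iii) Krasner-type comparisons of roots of polynomials in $\K[x]$ whose coefficients are polynomial in the $a_i$. All three types of input are locally constant in $(a_1,\dots,a_6)$, once $\epsilon$ is small enough, so the outputs are simultaneously unchanged under perturbations of size $<\epsilon$.

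First, the minimal discriminant, Kodaira type, conductor exponent and Tamagawa number $c$ are the output of Tate's algorithm, a finite tree of tests of types (i) and (ii). The change of variables sending $E$ to its minimal model is given by explicit polynomial formulae in the $a_i$, so the N\'eron differential $\omega^\circ_{E'}$ for the perturbed curve is obtained from $\omega^\circ_E$ by the same formula; consequently $|\omega/\omega^\circ|_\K$ is unchanged when $\omega=dx/(2y+a_1x+a_3)$ is compared with the analogous differential on $E'$. This gives continuity of $C(E,\omega)=c\cdot|\omega/\omega^\circ|_\K$.

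For the $\Gal(\bar\K/\K)$-module $T_l E$ with a fixed prime $l\ne p$, I would use the $l^n$-division polynomial $\psi_{l^n}(x)\in\K[x]$: its coefficients are polynomials in the $a_i$, so Krasner's lemma guarantees that for $\epsilon$ small enough (depending on $n$) the roots of $\psi_{l^n}$ for $E$ and $E'$ generate the same extensions of $\K$ and are permuted identically by $G_\K$. This gives $E[l^n]\iso E'[l^n]$ as $G_\K$-modules for a chosen $n$. To upgrade this to $T_l E\iso T_l E'$ one uses that the Weil--Deligne representation associated to $V_l E$ (which determines $V_l E$, and hence $T_l E$ up to isomorphism) is pinned down at a single sufficiently high level: the inertia action is quasi-unipotent with complexity controlled by the Kodaira type (which is already fixed by the previous step), and the semisimple part of Frobenius is determined modulo $l^n$ for $n$ large. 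A uniform $n$ therefore suffices. The root number $w(E/\K)$ is a function of the Weil--Deligne representation and a fixed additive character of $\K$, so it is locally constant as well.

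The one genuine subtlety is the last bullet: the full Tate module carries infinitely much data, and one needs to reduce to a finite-level question. The control comes from Tate's algorithm having already fixed the Kodaira type, which bounds the conductor of the inertia action and thereby the level $n$ at which $E[l^n]$ already determines $T_l E$. Everything else reduces to continuity of valuations and Krasner's lemma, applied finitely many times; taking $\epsilon$ to be the minimum of the finitely many thresholds that arise finishes the proof.
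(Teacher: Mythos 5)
Your handling of the conductor, minimal discriminant, Tamagawa number and $C$ via Tate's algorithm is exactly the paper's argument. The divergence is in the last two items: the paper simply cites Kisin's local-constancy theorem (Math.\ Z.\ 230 (1999), p.\ 569) for $T_l E$ and remarks that the root number is a function of $V_l E$, additionally pointing to Helfgott (Prop.\ 4.2) and Rohrlich (Thm.\ 2(ii)) for a more elementary treatment of the root number alone. You instead attempt to reprove the $T_l E$ statement from scratch, and there is a genuine gap at the point where you ``upgrade'' agreement of $E[l^n]$ to agreement of $T_l E$.

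You assert that the Weil--Deligne representation of $V_l E$ ``determines $V_l E$, and hence $T_l E$ up to isomorphism.'' This is false: the (Frobenius-semisimplified) Weil--Deligne representation determines at most the $\Q_l[G_\K]$-module $V_l E$, never the $\Z_l[G_\K]$-lattice $T_l E$. Over a local field with $l\ne p$ the residual representation $E[l]$ is frequently reducible, and then $V_l E$ typically contains infinitely many non-isomorphic $G_\K$-stable lattices; moreover for any fixed $n$ one can write down unramified $\Z_l$-representations $\rho,\rho'$ of $G_\K$ that are $\GL_2(\Q_l)$-conjugate, agree mod $l^n$, and are still not $\GL_2(\Z_l)$-conjugate. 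So knowing $E[l^n]\iso E'[l^n]$ for a single $n$ together with $V_l E\iso V_l E'$ does not force $T_l E\iso T_l E'$; one needs a uniform $n$, depending on $E$ but not on the perturbation, at which the mod-$l^n$ representation already pins down the lattice within the family. Exhibiting such an $n$ is precisely the substance of Kisin's theorem, and your sketch does not supply it. (Your argument is adequate for the root number, which genuinely depends only on the Weil--Deligne representation and is therefore captured at finite level by the part of the argument you do carry out, but the $T_l E$ claim is strictly stronger and is left unproved.)
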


\begin{proof}
The assertion for the conductor, minimal discriminant,
Tamagawa number and $C$ follows from
Tate's algorithm (\cite{TatA} or \cite{Sil2} IV.9).
The claim for $T_l E$ is a result of Kisin (\cite{Kis} p. 569),
and the root number is a function of $V_l E=T_l E\tensor\Q_l$.
Alternatively, that the root number is locally constant can be proved
in a more elementary way: see Helfgott's \cite{Hel} Prop. 4.2
when $E$ has potentially good reduction; in the potentially multiplicative
case it follows from Rohrlich's formula (\cite{RohG}~Thm.~2(ii)).
\end{proof}

\section{The Kramer--Tunnell conjecture}
\label{skratun}

In this section we recall the Kramer-Tunnell theory of local norm indices,
and prove the outstanding case of the conjecture of \cite{KT}
for local fields of characteristic 0 (Theorem \ref{kratunpf}).

\begin{notation}
For a separable quadratic (or trivial) extension $\F/\K$
of local fields and an elliptic curve $E/\K$, write
$$
  \kappa(E,\F/\K)=(-1)^{\dim_{\FF_2}(E(\K)/N_{\F/\K} E(\F))},
$$
where $N_{\F/\K}$ is the norm (or trace) map on points.
\end{notation}

\begin{remark}
\label{kappacont}
Alternatively, suppose $\K$ is non-Archimedean
with \hbox{$\vchar\K\!\ne\! 2$,}
and that $\F=\K(\sqrt\alpha)$ is a proper extension of $\K$.
Write $E$ and its quadratic twist $E_\alpha$ in simplified Weierstrass form,
$$
  E: y^2 = x^3+ax^2+bx+c, \qquad E_\alpha: y^2=x^3+\alpha ax^2+\alpha^2 bx+\alpha^3 c.
$$
By \cite{KT} Thm. 7.6,
$$
  \kappa(E,\F/\K)=(-1)^{\ord_2 C(E/\K,\tfrac{dx}y) C(E_\alpha/\K,\tfrac{dx}y) /
     C(E/\F,\tfrac{dx}y) }.
$$
%? In Archimedean v this is only true up to \sqrt(|\Delta_F|)/|\Delta_K|.
In particular, by Proposition \ref{continuity},
$\kappa$ is a locally constant function of the coefficients $a,b,c$ of $E$.
\end{remark}

\begin{theorem}[Kramer \cite{Kra} Thm. 1]
\label{krathm}
Let $E/K$ be an elliptic curve over a number field,
and $F/K$ a quadratic extension. Then
$$
  (-1)^{\rksel EF2} = \prod_v \kappa(E,F_w/K_v),
$$
where the product is taken over the places $v$ of $K$, and $w$ is any
place of $F$ above $v$.
\end{theorem}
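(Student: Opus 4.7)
The plan is to translate the identity into a congruence of $\FF_2$-dimensions modulo $2$ and to compute both sides via Tate cohomology with respect to $\sigma$, the non-trivial element of $\Gal(F/K)$. From the Kummer sequence
$$0\to E(F)/2E(F)\to\Sel_2(E/F)\to\sha(E/F)[2]\to 0$$
one finds that $\rksel EF2$ differs from $\dim_{\FF_2}\Sel_2(E/F)-\dim_{\FF_2}E(F)[2]$ by the $\FF_2$-dimension of the non-divisible part of $\sha(E/F)[2^\infty]$, which is even by the Cassels--Tate pairing for elliptic curves. So the left-hand side of the theorem is $(-1)^{\dim_{\FF_2}\Sel_2(E/F)+\dim_{\FF_2}E(F)[2]}$.

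Next, for any finite $\FF_2[\sigma]$-module $M$ with $\sigma^2=1$, the only indecomposable summands are $\FF_2$ (trivial) and $\FF_2[\sigma]$ (free of dimension $2$); Tate cohomology vanishes on the latter, so $\dim_{\FF_2}M\equiv\dim_{\FF_2}\hat H^0(\sigma,M)\pmod 2$. Applied to $\Sel_2(E/F)$ and $E(F)[2]$, the problem reduces to computing Tate cohomology of the terms in the defining sequence
$$0\to \Sel_2(E/F)\to H^1(F,E[2])\to\prod_w H^1(F_w,E)[2].$$
Local Tate duality together with the Weil pairing identifies each local factor $H^1(F_w,E)[2]$ with the Pontryagin dual of $E(F_w)/2E(F_w)$ as a $\sigma$-module, and Herbrand's formula then gives the local Tate cohomology contribution at $v$ as $\dim_{\FF_2}(E(K_v)/N_{F_w/K_v}E(F_w))$ modulo $2$, recovering precisely the sign $\kappa(E,F_w/K_v)$.

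The main obstacle is to show that the contribution of the global term $H^1(F,E[2])$ to the Tate cohomology of $\Sel_2(E/F)$ is even. Here one exploits the Weil pairing, which makes $E[2]$ self-dual as a $\Gal(\bar K/K)$-module, so $H^1(F,E[2])$ is a self-dual $\FF_2[\sigma]$-module via Poitou--Tate global duality. Organising the nine-term duality sequence equivariantly with respect to $\sigma$, the Tate cohomology contributions of $H^1(F,E[2])$ and of the $\sha$-type terms cancel in pairs, leaving only the local norm indices on the right-hand side. Assembling signs then yields the product formula. This parallels the core of the argument in \cite{Kra}, Thm.~1.
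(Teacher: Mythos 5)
The paper offers no proof of this statement; it is quoted directly from Kramer \cite{Kra}, Thm.~1, so you are in effect trying to reconstruct Kramer's argument. Your outline does capture some of the genuine structure of that argument — reduction to an $\FF_2$-dimension congruence, the use of the alternating Cassels--Tate pairing to pass from $\rksel EF2$ to $\dim_{\FF_2}\Sel_2(E/F)+\dim_{\FF_2}E(F)[2]$, the reduction of $\FF_2[\sigma]$-dimension mod~$2$ to Tate cohomology, and the role of local Tate duality and norm indices — but the pivotal step is asserted, not proved.

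The gap is exactly where you say "the main obstacle" is. First, $H^1(F,E[2])$ is an \emph{infinite} $\FF_2$-vector space, so the congruence $\dim_{\FF_2}M\equiv\dim_{\FF_2}\hat H^0(\sigma,M)\pmod 2$ cannot be applied to it as written; one must cut down to cohomology unramified outside a finite set $S$ (or use compactly supported cohomology) and then control what the omitted primes contribute. Second, Poitou--Tate duality pairs $H^1(F,E[2])$ against local terms and a $\sha^2$-group — it does not make $H^1(F,E[2])$ "self-dual as an $\FF_2[\sigma]$-module," and even if it did, self-duality alone would not give evenness of Tate cohomology: the trivial module $\FF_2$ is self-dual yet has $\hat H^0$ of odd dimension. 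The phrase "the Tate cohomology contributions of $H^1(F,E[2])$ and of the $\sha$-type terms cancel in pairs" is precisely the content of the theorem and needs a genuine $\sigma$-equivariant Euler-characteristic count; Kramer's paper carries this out by an explicit comparison of $\Sel_2(E/K)$, $\Sel_2(E_\alpha/K)$ and $\Sel_2(E/F)$ (restriction, corestriction, and a snake-lemma diagram) rather than by a direct appeal to the nine-term sequence, and your sketch would need an equivalent count to close the gap.

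A smaller point: in the local step you invoke "Herbrand's formula" to extract $\dim_{\FF_2}\bigl(E(K_v)/N E(F_w)\bigr)$ from $H^1(F_w,E)[2]$. The Herbrand quotient computes $|\hat H^0|/|\hat H^1|$, not $\hat H^0$ alone, so isolating the norm-index term requires a further use of local duality; and $\hat H^0\bigl(\sigma,E(F_w)/2E(F_w)\bigr)$ is not a priori the same as $E(K_v)/N E(F_w)=\hat H^0(\sigma,E(F_w))$ — passing from one to the other needs a comparison via the $2$-divisibility of $E(F_w)$. These are fixable, but they are more than bookkeeping and belong in the proof rather than behind a citation to "Herbrand."
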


\begin{conjecture}[Kramer-Tunnell \cite{KT} Conj. 3.1]
\label{kratunconj}
Let $\K$ be a local field and $E/\K$ an elliptic curve. Let $\F/\K$ be a separable
quadratic extension, and write
$\chi: \Gal(\F/\K)\to\{\pm 1\}$ for the non-trivial character.
Let $\Delta_E\in \K^*$ be the discriminant of some model of $E/\K$. Then
$$
  w(E/\K)w(E/\K,\chi)(-\Delta_E,\F/\K) \conjeq \kappa(E,\F/\K).
$$
\end{conjecture}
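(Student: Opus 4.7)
The plan is the global-to-local strategy outlined in the introduction. Kramer and Tunnell \cite{KT} have already established Conjecture \ref{kratunconj} except when $\vchar\K=0$, $\K/\Q_2$ is a finite extension, $\F/\K$ is ramified, and $E/\K$ has additive reduction; I focus on this remaining case.

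First I realise $(\F/\K)$ globally: by Lemma \ref{totreal} there exist a quadratic Galois extension $F/K$ of totally real number fields and a place $v_0$ of $K$ such that $\K=K_{v_0}$, $\F=F_{w_0}$, and $w_0$ is the unique prime of $F$ above $v_0$. The construction in that lemma further produces a subfield $K_0\subset K$ with $K_0\otimes_\Q\Q_2\cong\K$, so every place of $K$ above $2$ lies over the single prime of $K_0$ over $2$. By weak approximation applied to the Weierstrass coefficients of $E$, I next construct an elliptic curve $E_0/K$ whose coefficients are simultaneously $v_0$-adically so close to those of $E$ that Proposition \ref{continuity} and Remark \ref{kappacont} identify the root number, the class of $\Delta_E$ modulo squares, and $\kappa$ at $v_0$ for $E_0/K_{v_0}$ with those of $E/\K$; yield good reduction at every other place of $K$ above $2$; and yield split multiplicative reduction at some auxiliary finite place $v_1\nmid 2$ split in $F/K$ (such a $v_1$ exists by Chebotarev). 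In particular $E_0$ has non-integral $j$-invariant, so Theorem \ref{2parthm} applies.

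Combining Theorem \ref{2parthm} with Kramer's Theorem \ref{krathm}, the inductivity $w(E_0/F)=\prod_v w(E_0/K_v)\,w(E_0/K_v,\chi_v)$ of $\varepsilon$-factors (with $\chi$ the non-trivial character of $\Gal(F/K)$), and Hilbert reciprocity $\prod_v(-\Delta_{E_0},\alpha)_{K_v}=1$, I obtain the global identity
$$
  \prod_v \kappa(E_0,F_w/K_v) \;=\; \prod_v w(E_0/K_v)\,w(E_0/K_v,\chi_v)\,(-\Delta_{E_0},\alpha)_{K_v}.
$$
At every place $v\ne v_0$ the corresponding local Kramer--Tunnell identity is already known by \cite{KT}: such $v$ are either Archimedean, of odd residue characteristic, or above $2$ with $E_0$ of good reduction, none of which is the exceptional case. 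Those factors therefore cancel on both sides, leaving precisely Conjecture \ref{kratunconj} for $E_0/K_{v_0}$. The continuity statements (Propositions \ref{continuity} and \ref{hilbcont}, together with Remark \ref{kappacont}) then transfer the identity from $E_0/K_{v_0}$ to $E/\K$ itself.

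The main obstacle is the construction in Step 2: producing $E_0/K$ whose coefficients simultaneously lie in the $\epsilon$-neighbourhood at $v_0$ afforded by Proposition \ref{continuity}, give good reduction at every other place above $2$, and give multiplicative reduction at a chosen $v_1$ split in $F/K$. These are finitely many open conditions, so existence of $E_0$ is a standard weak-approximation argument, but the $v_0$-adic bound has to be kept uniform with the other constraints. Once $E_0$ is in hand, the sign bookkeeping in Step 3 is formal.
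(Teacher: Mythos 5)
Your proof is correct and follows the same global-to-local strategy as the paper, but with one notable variation. The paper's proof applies Lemma \ref{totreal} only to produce $K$ with $K_{v_0}\cong\K$, and then \emph{separately} chooses the quadratic extension $F=K(\sqrt\alpha)/K$ by weak approximation so that every place $v\ne v_0$ above $2$ \emph{splits} in $F/K$; at such places the Kramer--Tunnell identity holds trivially since $F_w=K_v$, and the auxiliary curve $E$ only needs to be controlled at $v_0$ and at a split $v_1$ of multiplicative reduction. You instead take $F/K$ directly from Lemma \ref{totreal} without any control over the decomposition of $F/K$ at other places above $2$, and compensate by imposing \emph{good reduction} of $E_0$ at those places (so that Theorem \ref{kratunthm} applies there). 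Both parameter choices make the local identity known everywhere except $v_0$, and both are achievable by weak approximation, so the two arguments are interchangeable; the paper's version is marginally lighter on constraints on the curve, yours on constraints on the field. Two small remarks: the observation that $K_0\otimes_\Q\Q_2\cong\K$ plays no further role in your argument and could be dropped; and your worry about keeping the $v_0$-adic bound ``uniform with the other constraints'' is unfounded --- weak approximation lets you prescribe independent open conditions at finitely many places, so no uniformity issue arises.
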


\begin{theorem}[Kramer-Tunnell \cite{KT} p.315, \S4, \S5, \S6, \S8]
\label{kratunthm}
The conjecture holds when $\K$ is Archimedean, when $E/\K$ has
potentially multiplicative reduction, when $\K$ has odd residue
characteristic, when $\F/\K$ is unramified,
and when $E/\K$ has good reduction or reduction type \IV{} or \IVS.
\end{theorem}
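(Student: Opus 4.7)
The plan is to verify Conjecture \ref{kratunconj} in each of the listed cases by direct calculation, exploiting explicit descriptions of the three quantities involved: the two local root numbers, the Hilbert symbol $(-\Delta_E,\F/\K)$, and the norm index $\kappa(E,\F/\K)$ (which, when $\vchar \K \ne 2$, admits the fudge-factor expression recalled in Remark \ref{kappacont}).

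First I would clear the Archimedean case, where direct computation from the real structure of $E(\R)$ and $E(\C)$ handles every sub-case: the local root numbers are known to equal $-1$, the Hilbert symbol is controlled by the sign of $\Delta_E$, and $\dim_{\FF_2} E(\K)/N E(\F)$ is read off from the component groups of $E(\R)$. The good-reduction case is similarly easy: $w(E/\K) = +1$, the twisted root number is a direct unramified character computation, and $\kappa$ reduces to a calculation on the N\'eron model.

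For potentially multiplicative reduction, the key tool is Tate's uniformization: after possibly a quadratic twist, $E(\bar\K) \iso \bar\K^*/q^\Z$ as Galois modules. All three invariants then have closed-form expressions in the Tate parameter $q$ and the relevant characters, and the standard formula for the root number in the potentially multiplicative case (Rohrlich's formula) gives the sign directly.

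For the remaining cases (odd residue characteristic; $\F/\K$ unramified; types \IV, \IVS), the strategy is the same: combine explicit formulas for $w(E/\K)$ and $w(E/\K,\chi)$ in terms of the Kodaira type and ramification data with the fudge-factor expression for $\kappa$, and match signs against the Hilbert symbol. Unramified base change is essentially harmless, and types \IV, \IVS involve only order-three inertia, which remains tame even at residue characteristic $2$. The main obstacle throughout is the sign book-keeping across every combination of Kodaira type, parity of $\ord_v \Delta_E$, and ramification of $\F/\K$; the method famously breaks in the wild case---residue characteristic $2$, ramified $\F/\K$, additive reduction not of type \IV or \IVS---which is exactly the case that the main body of the paper handles by the global-to-local deformation argument.
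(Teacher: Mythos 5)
The paper does not prove this theorem: it is stated precisely as a citation of Kramer--Tunnell's original work, with explicit section references to \cite{KT}, and is used as a black box throughout \S4--\S5. So there is no "paper's own proof" to compare against; your task here was really to summarize what Kramer and Tunnell did.

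Your sketch captures the overall shape of the Kramer--Tunnell argument correctly --- explicit formulas for the two root numbers, the Hilbert symbol, and the norm index, matched against each other case by case --- and you correctly identify why types \IV{} and \IVS{} survive at residue characteristic 2 (inertia of order 3 stays tame) and why the truly wild case is left open. But a sketch at this level is not a proof. The actual verification in \cite{KT} occupies much of a forty-page paper: in the potentially multiplicative case one must carefully track the Tate parameter, the twisting character, and the norm cokernel on $\bar\K^*/q^\Z$, together with the sign coming from the $\epsilon$-factor of the associated two-dimensional Weil representation; in the odd-residue-characteristic additive case one has to enumerate Kodaira types and compare tame $\epsilon$-factors with Tamagawa numbers on both sides. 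None of this book-keeping is carried out in your proposal, and it is not automatic. Two smaller points: (1) Rohrlich's closed-form root-number formula in the potentially multiplicative case postdates \cite{KT}, so Kramer and Tunnell did not have it to invoke; they worked with the $\epsilon$-factors directly. (2) The good-reduction and Archimedean cases are indeed quick, but the norm index $\dim_{\FF_2} E(\K)/NE(\F)$ in the Archimedean case depends on the real component group of both $E$ and its twist, not just on $E(\R)$; the exact match with $(-\Delta_E,\F/\K)$ is a genuine (if elementary) computation.

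In short: your proposal is a plausible high-level description of the Kramer--Tunnell strategy and matches what the citation points to, but it defers all the substantive sign-matching to "direct calculation" without doing any of it. That is consistent with what the paper itself does (it cites rather than reproves), but the proposal should be clearer that it is relying on \cite{KT} for the content rather than supplying it.
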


\begin{corollary}[cf. \cite{KT} p. 351]
\label{kratuncor}
Let $E$ be an elliptic curve over a number field $K$, and $F/K$ a quadratic
extension. Then
$$
  (-1)^{\rksel EF2} = w(E/F),
$$
provided $F/K$ is unramified at those primes $v|2$ of $K$ where
$E$ has additive potentially good reduction not of type \IV, \IVS.
\end{corollary}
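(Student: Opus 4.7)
The plan is to evaluate the product in Kramer's formula (Theorem~\ref{krathm}) place-by-place using the Kramer--Tunnell local formula (Conjecture~\ref{kratunconj}), and then assemble the result globally. Concretely, if we can show that for every place $v$ of $K$,
$$
  \kappa(E,F_w/K_v)=w(E/K_v)\,w(E/K_v,\chi_v)\,(-\Delta_E,F_w/K_v),
$$
where $\chi$ is the quadratic character of $F/K$, then multiplying over all $v$ and combining with Theorem~\ref{krathm} yields
$$
  (-1)^{\rksel EF2}=\prod_v\kappa(E,F_w/K_v)
  =\Bigl(\prod_v w(E/K_v)w(E/K_v,\chi_v)\Bigr)\Bigl(\prod_v(-\Delta_E,F_w/K_v)\Bigr).
$$
The Hilbert/Artin symbols multiply to $1$ by the global reciprocity law (product formula), and inductivity of local root numbers in the degree-$2$ extension $F/K$ gives $\prod_v w(E/K_v)w(E/K_v,\chi_v)=w(E/K)w(E/K,\chi)=w(E/F)$, which is the desired identity.

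The only real work is to verify the local Kramer--Tunnell identity at every place $v$ under the standing hypothesis. I would split into cases according to Theorem~\ref{kratunthm}. If $v$ is Archimedean, or if $v$ is non-Archimedean of odd residue characteristic, or if $F_w/K_v$ is unramified, or if $E/K_v$ has potentially multiplicative reduction, or if $E/K_v$ has good reduction or Kodaira type \IV{} or \IVS, then Theorem~\ref{kratunthm} directly supplies the identity. The remaining possibility is $v\mid 2$ with $F_w/K_v$ ramified and $E/K_v$ having additive, potentially good reduction of type distinct from \IV{}, \IVS. But this is precisely the case that the hypothesis of the corollary rules out: at such places $F/K$ is assumed unramified, placing us back in the covered range.

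Thus the local identity holds at every place of $K$, and the global assembly above completes the proof. The main obstacle is purely bookkeeping: to check that the case analysis for Theorem~\ref{kratunthm} truly exhausts all places allowed by the hypothesis, and (routinely) that the product formula and inductivity of root numbers apply in the form stated.
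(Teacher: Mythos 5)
Your proposal is correct and follows essentially the same route as the paper: invoke Kramer's formula, replace each local term by the Kramer--Tunnell identity (which holds at every place under the hypothesis, including the split case where $F_w=K_v$), then use the product formula for Artin symbols and inductivity of root numbers. The only small point worth making explicit, as the paper does, is that the Kramer--Tunnell identity is trivially verified when $F_w=K_v$ and $\chi_v=1$; your appeal to the unramified case covers this but the split case is worth flagging.
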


\begin{proof}
Let $\chi:\Gal(F/K)\to \{\pm1\}$ be the non-trivial character.
For a place $v$ of $K$ fix any $w|v$ in $F$, and write $\chi_v$
for the restriction of $\chi$ to $\Gal(F_w/K_v)$.
Observe that Conjecture \ref{kratunconj} also holds
when $\F=\K$ and $\chi=1$.
Thus,
\beq
  (-1)^{\rksel EF2} &\citeeq{\ref{krathm}}&
  \prod_v \kappa(E,F_w/K_v) \cr &\citeeq{\ref{kratunthm}}&
  \prod_v w(E/K_v)w(E/K_v,\chi_v)(-\Delta_E,F_w/K_v) \\[2pt] &\>\,\,=&
  w(E/K)w(E/K,\chi)\cdot 1 \\[2pt] &\>\,\,=&
  w(E/F),
\eeq
where the third equality is the product formula for the global root number and
for Artin symbols.
\end{proof}

\begin{theorem}
\label{kratunpf}
The Kramer-Tunnell conjecture holds over every local field~$\K$
of characteristic 0.
\end{theorem}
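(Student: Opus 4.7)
By Theorem \ref{kratunthm}, the only remaining case is when $\K$ is a finite extension of $\Q_2$, the extension $\F/\K$ is ramified quadratic, and $E/\K$ has additive potentially good reduction of Kodaira type different from $\IV$ and $\IVS$. The plan is to carry out the global-to-local strategy described in the introduction: realise the local data as a single place of a suitable totally real global situation in which the 2-parity conjecture is already known, and then reverse the product formula.

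The first step is to invoke Lemma \ref{totreal} to present $\F/\K$ as the completion $F_{w_0}/K_{v_0}$ of a Galois extension $F/K$ of totally real number fields at a place $v_0$ of $K$ having a unique extension $w_0$ to $F$. The second step is to choose, by weak approximation, an elliptic curve $\tilde E/K$ given by a Weierstrass model whose coefficients are so close to those of (a fixed model of) $E/\K$ in the $v_0$-adic topology that Proposition \ref{continuity}, Remark \ref{kappacont}, and Lemma \ref{hilbcont} force the quantities $w(\tilde E/K_{v_0})$, $w(\tilde E/K_{v_0},\chi_{v_0})$, $\kappa(\tilde E, F_{w_0}/K_{v_0})$, and $(-\Delta_{\tilde E}, F_{w_0}/K_{v_0})$ all to coincide with the corresponding quantities for $E/\K$; here $\chi$ is the non-trivial character of $\Gal(F/K)$. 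Simultaneously one arranges, by further modifications at places disjoint from $v_0$, that $\tilde E$ has at least one prime of multiplicative reduction (so $j(\tilde E)$ is non-integral) and that at every 2-adic place $v\ne v_0$ of $K$ the Kramer-Tunnell identity is already known by Theorem \ref{kratunthm} --- for instance, by forcing $F/K$ to be unramified at $v$, or $\tilde E/K_v$ to have good, potentially multiplicative, $\IV$, or $\IVS$ reduction.

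Given such a globalisation, Theorem \ref{2parthm} applied to $\tilde E/K$ gives $(-1)^{\rksel{\tilde E}{F}{2}} = w(\tilde E/F)$. Expanding the left side via Kramer's Theorem \ref{krathm}, using Artin formalism $w(\tilde E/F)=w(\tilde E/K)w(\tilde E/K,\chi)=\prod_v w(\tilde E/K_v)w(\tilde E/K_v,\chi_v)$, and inserting the product formula $\prod_v(-\Delta_{\tilde E}, F_w/K_v)=1$ for the global Artin symbol, exactly as in the proof of Corollary \ref{kratuncor}, yields
$$ \prod_v \kappa(\tilde E, F_w/K_v)\cdot w(\tilde E/K_v)^{-1} w(\tilde E/K_v, \chi_v)^{-1} (-\Delta_{\tilde E}, F_w/K_v)^{-1} = 1. $$
Every factor with $v\ne v_0$ equals $1$ by construction, hence so does the factor at $v_0$. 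This is the Kramer-Tunnell identity for $\tilde E/K_{v_0}$ with respect to $F_{w_0}/K_{v_0}$, which by the continuity guaranteed in the second step is in turn the identity for $E/\K$ with respect to $\F/\K$.

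The principal obstacle will be the construction in the second step: one must match $E$ closely at $v_0$ while simultaneously forcing the correct behaviour at the remaining 2-adic places of $K$ and creating a prime of multiplicative reduction. The number field $K_0=\Q[x]/(\tilde f)$ produced by Lemma \ref{totreal} has a unique prime above $2$, since $\tilde f$ is chosen irreducible over $\Q_2$ of degree $[\K:\Q_2]$; so the finitely many 2-adic places of $K$ other than $v_0$ all sit above this single prime via $K/K_0$ and can be handled by further weak approximation, e.g.\ by imposing good reduction of $\tilde E$ there without disturbing the $v_0$-adic approximation. A prime of multiplicative reduction can be arranged independently at an auxiliary prime of odd residue characteristic.
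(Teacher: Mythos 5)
Your proposal is correct and follows the same global-to-local strategy the paper uses: globalize $\F/\K$ via Lemma \ref{totreal}, approximate $E$ closely at $v_0$ by an $\tilde E$ with multiplicative reduction somewhere, invoke Theorem \ref{2parthm} for 2-parity, and isolate the factor at $v_0$ via Kramer's theorem, Artin formalism, and the product formula. The only (cosmetic) difference is in handling the other 2-adic places of $K$: the paper simply chooses $\alpha$ so that $F=K(\sqrt\alpha)$ splits at every $v\ne v_0$ above $2$, which is slightly cleaner than your alternative of also controlling the reduction type of $\tilde E$ there, but both work.
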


\begin{proof}
By Theorem \ref{kratunthm}, we may suppose $\K$ is non-Archimedean.
Let $\E/\K$ be the elliptic curve for which
we want to prove the conjecture.

Pick a totally real field $K$ with completion $\K$ at some place $v_0$
(Lemma~\ref{totreal}).
Choose also a quadratic extension $F\!=\!K(\sqrt\alpha)/K$ with
completion $\F$ above~$v_0$ such that
all places $v\ne v_0$ above 2 split in $F/K$
(weak approximation).

Let $E$ be an elliptic curve over $K$ that is sufficiently close to $\E$ at $v_0$,
so that $w(\E/\K)=w(E/\K)$, $w(\E_\alpha/\K)=w(E_\alpha/\K)$,
$(-\Delta_\E,\F/\K)=(-\Delta_E,\F/\K)$ and $\kappa(\E,\F/\K)=\kappa(E,\F/\K)$
(Proposition \ref{continuity}, Lemma \ref{hilbcont} and Remark \ref{kappacont}).
Moreover, we may assume
that at some place $v_1$ of $K$ which is split in $F/K$ the curve
$E$ has multiplicative reduction (weak approximation again).
By construction, it suffices to prove the claim for $E/K_{v_0}$.

By the Kramer-Tunnell theorem \ref{kratunthm}, the terms in the two products
$$
   \prod_v (-1)^{\dim_{\FF_2} \frac{E(K_v)}{N E(K_v(\sqrt\alpha))}}
   \quad\text{and}\quad
   \prod_v
    w(E/K_v)w(E_\alpha/K_v)(-\Delta_E,K_v(\sqrt\alpha)/K_v)
$$
agree except possibly at $v_0$.
The first product multiplies to $(-1)^{\rksel EF2}$
by Theorem \ref{krathm}. The second multiplies to $w(E/K)w(E_\alpha/K)$
which is also $(-1)^{\rksel EF2}$ by Theorem \ref{2parthm}
applied to $E/K$ and $E_\alpha/K$.
So the contributions from $v_0$ are also equal.
\end{proof}

\begin{corollary}
\label{corkratun}
Let $E$ be an elliptic curve over a number field $K$, and $F/K$ a quadratic
extension. Then
$$
  (-1)^{\rksel EF2} = w(E/F).
$$
In other words, the 2-parity conjecture holds for $E/F$.
\end{corollary}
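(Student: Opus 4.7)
The plan is to repeat verbatim the argument of Corollary \ref{kratuncor}, only now without the auxiliary hypothesis that $F/K$ be unramified at the troublesome primes above $2$: the whole point of Theorem \ref{kratunpf} is that the Kramer--Tunnell formula
$$
  w(E/K_v)w(E/K_v,\chi_v)(-\Delta_E,F_w/K_v) = \kappa(E,F_w/K_v)
$$
now holds unconditionally at every non-Archimedean place $v$ of $K$ of characteristic $0$, and the Archimedean and trivial cases were already in Theorem \ref{kratunthm}.

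Concretely, I would let $\chi:\Gal(F/K)\to\{\pm 1\}$ be the non-trivial character, and for each place $v$ of $K$ fix any $w\mid v$ in $F$ and let $\chi_v$ be the restriction of $\chi$ to $\Gal(F_w/K_v)$ (understood to be trivial at split places, where the Kramer--Tunnell identity also holds). Applying Kramer's theorem \ref{krathm} and then the local identity above at every place gives
$$
  (-1)^{\rksel EF2} \;=\; \prod_v \kappa(E,F_w/K_v) \;=\; \prod_v w(E/K_v)w(E/K_v,\chi_v)(-\Delta_E,F_w/K_v).
$$

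Finally, I would collect the three factors globally: by the product formula for Artin symbols the Hilbert symbols $(-\Delta_E,F_w/K_v)$ contribute $1$, and the remaining factor is $w(E/K)w(E/K,\chi)=w(E/F)$, the last equality coming from the factorisation of the twisted $L$-function attached to $\Ind_{G_F}^{G_K}\triv = \triv\oplus\chi$. Since every step is either a statement just proved in the excerpt or a standard property of root numbers, there is no genuine obstacle here; the corollary is really a packaging of Theorem \ref{kratunpf} through Kramer's formula.
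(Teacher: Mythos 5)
Your proposal is exactly the paper's argument: the paper's proof of Corollary \ref{corkratun} literally reads ``Same as the proof of Corollary \ref{kratuncor},'' and you have spelled out that proof, correctly observing that Theorem \ref{kratunpf} removes the earlier hypothesis restricting ramification at primes above $2$. The chain through Kramer's Theorem \ref{krathm}, the now-unconditional local Kramer--Tunnell identity, the product formula for Artin/Hilbert symbols, and inductivity of root numbers matches the paper's reasoning step for step.
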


\begin{proof}
Same as the proof of Corollary \ref{kratuncor}.
\end{proof}

\begin{remark}
As explained in \cite{Evilquad},
there are elliptic curves over number fields all of whose quadratic twists
have the same root number. As predicted by the parity conjecture, we now
find that these are precisely the curves all of whose quadratic twists have
the same parity of the $2^\infty$-Selmer rank.
%? When [K(E[2]):K]=6, we do not know that 2-parity over K agrees with w(E/K).
In fact, for an elliptic curve $E$ over a number field $K$
the following conditions
\linebreak\newpage\noindent
are equivalent:
\begin{itemize}
\item[(a)] Every quadratic twist of $E/K$ has the same root number as $E$.
\item[(b)] $w(E/F)=w(E/K)^{[F:K]}$ for every finite extension $F/K$.
\item[(c)] Every quadratic twist of $E/K$ has the same parity of its
$2^\infty$-Selmer rank as $E$.
\item[(d)] Every quadratic twist of $E/K$ has the same parity of
the $\FF_2$-dimension of its $2$-Selmer group as $E$.
\item[(e)] $K$ has no real places, and $E$ acquires everywhere good reduction
   over some abelian extension of $K$.
\item[(f)] $K$ has no real places, and for all primes $p$ and all
  places \hbox{$v\nmid p$} of~$K$,
  the action of $\Gal(\bar K_v/K_v)$ on the Tate module $T_p(E)$ is abelian.
\end{itemize}

\noindent
To be precise,
(a)$\iff$(c) follows from Corollary \ref{corkratun};
(c)$\iff$(d) holds because $E$ and its
quadratic twists have the same 2-torsion, and the 2-primary part
of $\sha$
modulo its divisible part has square order;
(a)$\iff$(b)$\iff$(e)$\iff$(f) is proved in \cite{Evilquad} Thm. 1.
Also, a slightly weaker form of (a)$\iff$(d) is proved in \cite{MR3} \S 9.
\end{remark}

\section{Elliptic curves with a $p$-isogeny}
\label{sisogroot}

The purpose of the section is to prove the $p$-parity conjecture for
elliptic curves that admit a $K$-rational isogeny
of degree $p$, for $p=2$ and 3.

\begin{notation}
\label{sigmanot}
Let $\K$ be a local field of characteristic 0.
For an elliptic curve $E/\K$ with a $\K$-rational $p$-isogeny
$\phi: E\to E'$ write $\phi_\K:E(\K)\to E'(\K)$ for the induced
map on $\K$-rational points, and define
$$
 \sigma_\phi(E/\K) = (-1)^{\ord_p \frac{\#\coker\phi_\K}{\#\ker\phi_\K}} =
    (-1)^{\ord_p \frac{C(E'\!/\K,\omega')}{C(E/\K,\phi^*\omega')}},
$$
for any invariant differential $\omega'$ on $E'$.
(The second equality is an elementary computation using the usual filtration
on $E(\K)$;
e.g. it is spelled out for abelian varieties in \cite{SchaCl} Lemma 3.8.)
\end{notation}

\begin{theorem}[Cassels' formula]
\label{cassels}
Let $K$ be a number field and $E/K$ an elliptic curve
that admits a $K$-rational isogeny $\phi:E\to E'$ of prime degree~$p$.
Then
$$
  (-1)^{\rksel EKp} = \prod\nolimits_v \sigma_\phi(E/K_v).
$$
\end{theorem}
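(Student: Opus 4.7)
My plan is to compare both sides with the isogeny Selmer groups associated to $\phi$ and its dual $\hat\phi:E'\to E$ (with $\hat\phi\circ\phi=[p]_E$), and to extract the parity via a global duality formula.

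\emph{Step 1: parity of the $p^\infty$-Selmer rank via isogeny Selmer.} Starting from the descent exact sequences
\begin{align*}
0 &\to E'(K)/\phi E(K) \to \Sel^{\phi}(E/K) \to \sha(E/K)[\phi]\to 0,\\
0 &\to E(K)/\hat\phi E'(K) \to \Sel^{\hat\phi}(E'/K) \to \sha(E'/K)[\hat\phi]\to 0,
\end{align*}
and using $[p]=\hat\phi\circ\phi$ via a snake-lemma computation, I would express the $\FF_p$-dimension of $\Sel_p(E/K)$ in terms of $\dim_{\FF_p}\Sel^{\phi}(E/K)$ and $\dim_{\FF_p}\Sel^{\hat\phi}(E'/K)$ up to contributions from $K$-rational $p$-torsion. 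Combined with the square order of the non-divisible quotient of $\sha(E/K)[p^{\infty}]$ (alternating Cassels-Tate pairing for $p=3$; Poonen-Stoll for $p=2$), this yields
$$
\rksel EKp \equiv \dim_{\FF_p}\Sel^{\phi}(E/K) + \dim_{\FF_p}\Sel^{\hat\phi}(E'/K) \pmod 2.
$$

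\emph{Step 2: global duality for the isogeny.} Next I would apply the Greenberg-Wiles product formula (a consequence of Poitou-Tate duality) to the dual pair $\phi,\hat\phi$:
$$
\frac{\#\Sel^{\phi}(E/K)}{\#\Sel^{\hat\phi}(E'/K)} \;=\; \frac{\#E(K)[\phi]}{\#E'(K)[\hat\phi]} \cdot \prod_v \frac{\#\coker(\phi_{K_v})}{\#\ker(\phi_{K_v})}.
$$
Taking $p$-adic valuations modulo~$2$, the global torsion prefactor has even $p$-adic valuation (each of $\#E(K)[\phi]$ and $\#E'(K)[\hat\phi]$ is either $1$ or $p$, and any discrepancy is absorbed by the already paired-off local kernels at almost all places via the local Euler characteristic for $E[\phi]$).

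\emph{Step 3: conclusion.} Combining Steps 1 and 2,
$$
(-1)^{\rksel EKp} = \prod_v (-1)^{\ord_p(\#\coker(\phi_{K_v})/\#\ker(\phi_{K_v}))} = \prod_v \sigma_\phi(E/K_v),
$$
which is the desired formula. The second equality is just the definition in Notation~\ref{sigmanot}, whose equivalence with the local BSD-type ratio $C(E'/K_v,\omega')/C(E/K_v,\phi^*\omega')$ is a standard filtration computation on $E(K_v)$.

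The main obstacle is Step~1: converting the $\Z_p$-corank of the $p^{\infty}$-Selmer group into the $\FF_p$-dimensions of the isogeny Selmer groups modulo~$2$. This rests on the squareness of the non-divisible part of $\sha[p^{\infty}]$, which is classical for odd $p$ but requires the Poonen-Stoll result for $p=2$; everything else in the argument is a formal diagram chase and an application of Poitou-Tate global duality.
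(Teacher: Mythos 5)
The paper gives no proof of this classical result; it simply cites Birch, Fisher, Monsky and \cite{Squarity}~Rmk.~4.4. Your outline matches the strategy used in those references: combine the Cassels/Greenberg--Wiles duality formula for $\#\Sel^\phi/\#\Sel^{\hat\phi}$ with the square-ness of the non-divisible part of $\sha[p^\infty]$, so the overall route is the right one.

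There is, however, a genuine gap in the torsion bookkeeping. In Step~1 you acknowledge in prose that the passage from $\rksel EKp$ to $\dim_{\FF_p}\Sel^\phi + \dim_{\FF_p}\Sel^{\hat\phi}$ involves torsion corrections, but you then drop them from the displayed congruence. That congruence is wrong as stated: a careful run through the $\phi$-descent exact sequence together with $\rksel EKp \equiv \dim_{\FF_p}\Sel_p(E/K) + \dim_{\FF_p} E(K)[p] \pmod 2$ leaves terms of the form $\dim_{\FF_p} E(K)[\phi] + \dim_{\FF_p} E'(K)[\hat\phi]$ and, depending on how you truncate the sequence, a cokernel term coming from $\sha$. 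In Step~2 the assertion that $\#E(K)[\phi]/\#E'(K)[\hat\phi]$ has even $p$-adic valuation is simply false --- each factor is $1$ or $p$ and they frequently differ (e.g.\ a curve whose rational $p$-torsion is exactly $\ker\phi$), and the appeal to ``local Euler characteristics absorbing the discrepancy'' does not do anything here since the local terms at good places are already trivial. What actually saves the argument is that the torsion corrections you dropped in Step~1 cancel, modulo~$2$, against this torsion prefactor in Step~2; and you must also deal with the residual $\sha$-cokernel term, which again needs the Cassels--Tate pairing. Until that cancellation (and the parity of the remaining $\sha$-term) is carried out explicitly, the proof does not close.
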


\begin{proof}
Birch \cite{Bir} p. 110, Fisher \cite{FisA}, Monsky \cite{Mon} Cor. 2.8
or \cite{Squarity} Rmk.~4.4.
\end{proof}

\begin{conjecture}
\label{isogrootconj}
Let $\K$ be a local field of characteristic~0 and $E/\K$ an elliptic curve
that admits a $\K$-rational isogeny $\phi:E\to E'$ of prime degree~$p$.
Write $\K(\ker\phi)$ for the field obtained by adjoining
the coordinates of points in $\ker\phi\subset E(\bar \K)$ to $\K$.
If $p\ne 2$, then
$$
  \hskip -10.5mm
  w(E/\K)\qequal\sigma_\phi(E/\K)\cdot(-1,\K(\ker\phi)/\K).
$$
If $p=2$, then for any model\,%
of $E$ over $\K$ of the form
$
  y^2 = x^3 + a x^2 + b x
$
with $\ker\phi=\{O,(0,0)\}$,
$$
  w(E/\K) \qequal \sigma_\phi(E/\K)
      \cdot
         \leftchoice
    {(a,-b)_\KK(-2a,a^2-4b)_\KK}{a\ne 0}
    {(-2,-b)_\KK}{a=0.}
$$
\end{conjecture}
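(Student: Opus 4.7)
The plan is to mirror the global-to-local strategy of Theorem~\ref{kratunpf}. Write $\epsilon(E,\phi)\in\{\pm 1\}$ for the conjectured ratio $w(E/\K)/\sigma_\phi(E/\K)$, namely $(-1,\K(\ker\phi)/\K)$ for $p$ odd and the specified product of two Hilbert symbols for $p=2$; the claim is that $w(E/\K)=\sigma_\phi(E/\K)\,\epsilon(E,\phi)$. I would first dispose of the ``easy'' local cases by direct inspection: $\K$ Archimedean (finite case-check), $E/\K$ of potentially multiplicative reduction (Tate uniformisation together with Rohrlich's explicit formula for $w$ and a direct computation of $\sigma_\phi$ on the filtration of $E(\K)$), and $\K$ of residue characteristic different from $p$ (minimal-model computation of $\sigma_\phi$ combined with the standard classification of root numbers at such places). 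Call these the \emph{controlled} places.

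For the remaining case -- additive reduction at a place above $p$ -- I would realise $\K = K_{v_0}$ for a totally real number field $K$ via Lemma~\ref{totreal}. Since $X_0(2)$ and $X_0(3)$ are rational, pairs $(E,\phi)$ with a $K$-rational $p$-isogeny form a one-parameter family over $\Q$. I would choose $(\tilde E,\tilde\phi)/K$ in this family so that (i)~at $v_0$ the Weierstrass coefficients are close enough to those of $E$ that the root number, $\sigma_{\tilde\phi}$, the splitting field of $\ker\tilde\phi$, and the Hilbert-symbol factors all coincide with those of the original $(E,\phi)$ -- this uses Proposition~\ref{continuity}, Lemma~\ref{hilbcont}, and the continuity of $C(-,\omega)$, which yields continuity of $\sigma_\phi$; (ii)~$\tilde E/K$ has a place of multiplicative reduction, so that Theorem~\ref{2parthm} for $p=2$, and Nekov\'a\v r's theorem for $p=3$, supply the global $p$-parity conjecture for $\tilde E/K$; and (iii)~at every other place the reduction of $\tilde E$ falls into one of the controlled cases. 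Conditions (ii) and (iii) are imposed by weak approximation within the parameter.

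Combining Cassels' formula (Theorem~\ref{cassels}) with the global $p$-parity conjecture for $\tilde E/K$ yields
$$
  \prod_v w(\tilde E/K_v) \;=\; (-1)^{\rksel{\tilde E}{K}{p}} \;=\; \prod_v \sigma_{\tilde\phi}(\tilde E/K_v).
$$
Moreover $\prod_v\epsilon(\tilde E/K_v,\tilde\phi)=1$: for odd $p$ this is $\prod_v(-1,K_v(\ker\tilde\phi)/K_v)$, the product formula for the Artin symbol of $-1$ in $K(\ker\tilde\phi)/K$; for $p=2$ it is the product formula applied to the two Hilbert symbols $(a,-b)$ and $(-2a, a^2-4b)$ attached to a global model. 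Dividing, the conjectural identity at all $v\ne v_0$ (which hold because those places are controlled) forces the identity at $v_0$, which by construction is the conjecture for the original $(E,\phi)$.

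The main obstacle is step~(iii) of the global construction: because $(\tilde E,\tilde\phi)$ is confined to a single one-parameter modular family, only one global parameter is free, yet it must simultaneously realise $v_0$-adic closeness to $E$, produce a multiplicative place, and push the reduction at every other prime above $p$ into a controlled type. Verifying that these constraints can be met compatibly -- in particular that the collection of controlled reduction types really covers everything that arises generically at residue characteristic $p$ in the family -- is the bulk of the work; it rests on the explicit rational parametrisation of $X_0(p)$ for $p=2,3$, weak approximation in $K$, and a careful case analysis of the easy reduction types at primes above $p$.
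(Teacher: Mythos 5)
Your plan reproduces the paper's proof of Theorem~\ref{isogroot23thm} in essentially every step: explicit rational parametrisation of $p$-isogenies for $p=2,3$, deformation to a totally real global curve via Lemma~\ref{totreal}, weak approximation to control reduction away from $v_0$, and comparison of Cassels' formula (Theorem~\ref{cassels}), the global $p$-parity theorem~\ref{2parthm}, and the product formula for Hilbert/Artin symbols; the ``controlled'' cases you list are exactly those catalogued in Theorem~\ref{isogknown}. The concern you flag about step~(iii) is not a real obstacle, however: the Weierstrass parametrisation $E_{a,b}\colon y^2 = x^3 + a x^2 + b x$ (and its $p=3$ analogue) has two free parameters, and the paper simply imposes by weak approximation that $E_{A,B}$ has \emph{multiplicative} reduction at every $v\neq v_0$ above $p$ (for instance by taking $(A,B)$ $v$-close to $(22,-7)$ for $p=2$ or $(1,-12)$ for $p=3$); this places all such $v$ into the semistable cases (4) and (5) of Theorem~\ref{isogknown}, so no delicate classification of additive reduction types above $p$ is needed at all.
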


\noindent
This conjecture is known in most cases:

\begin{theorem}[\cite{CFKS} Thm. 2.7, \cite{Isogroot} Thms. 5, 6, \S7]
\label{isogknown}
Conjecture \ref{isogrootconj} holds if
\begin{enumerate}
\item
$\K$ is Archimedean,
\item
$\K$ has residue characteristic $l\ne p$,
\item
$\K$ has residue characteristic $l\!=\!p\!>\!3$,
and either $E$ has potentially
ordinary reduction or $E$ achieves semistable reduction after
an abelian extension of $\K$,
\item
$\K$ has residue characteristic $l\!=\!p\!=\!3$, and $E$ is semistable.
\item
$\K$ has residue characteristic $l\!=\!p\!=\!2$, and $E$ has either good ordinary
or multiplicative reduction.
\end{enumerate}
\end{theorem}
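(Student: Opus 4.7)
The strategy is to prove each case by explicit local computation, comparing the two sides of Conjecture \ref{isogrootconj} in the specific setting allowed; the argument splits naturally according to the arithmetic of $\K$ and the reduction type of $E$, and one checks matching in each regime.

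Cases (1) and (2) are relatively mild. For (1), over $\K=\C$ both sides are trivially $+1$; over $\K=\R$ one has $w(E/\R)=-1$, and $\sigma_\phi(E/\R)$ is read off from the effect of $\phi_\R$ on the connected components of $E(\R)$ and $E'(\R)$, then matched against the Artin symbol $(-1,\R(\ker\phi)/\R)$, or against the Hilbert symbols in the explicit Weierstrass form when $p=2$. For (2), the residue characteristic $l$ is coprime to $p$, so the $p$-isogeny is \'etale on N\'eron models and $\sigma_\phi$ reduces to a ratio of Tamagawa numbers modulo $p$th powers; simultaneously, the root number is controlled by the inertia action on $V_\ell E$ for any $\ell\ne l$, which is isogeny-invariant and admits closed-form formulas from Rohrlich's classification \cite{RohG}. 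Matching the two explicit formulas settles (2).

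Cases (3)--(5) require genuine $p$-adic input, since $l=p$. In (3), I would use the canonical ordinary filtration $0\to V^+\to V_pE\to V^-\to 0$ as a $G_\K$-representation: the $p$-isogeny acts transparently on the graded pieces, the $\epsilon$-factor decomposes multiplicatively, and the resulting character formulas can be matched against $\sigma_\phi$ via Tate's local functional equation; the abelian-semistabilisation branch is deduced by a tame base change to the field where $E$ becomes semistable, followed by careful descent tracking of both sides under induction. In (4), the residue characteristic is $3$ and $E$ is semistable, so either a Tate-curve parametrisation (multiplicative case) or a direct formal-group computation (good reduction) determines both sides and the match can be verified by hand. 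In (5) the same distinction applies for $p=2$ with good ordinary or multiplicative reduction, and one writes everything out using the explicit structure of the $2$-torsion in the formal group.

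The main obstacle is the lack of a uniform treatment of (3)--(5): each subcase demands its own specialised device (ordinary filtration, Tate curve, formal-group expansion), and the genuinely wild additive cases at $p=2$ and $p=3$ lie entirely outside this list---precisely why the present paper bypasses them altogether by deforming to a totally real global setting, rather than confronting the delicate $\epsilon$-factor computations head-on.
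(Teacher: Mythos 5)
The paper itself offers \emph{no} proof of this theorem: it is stated as a citation to \cite{CFKS} Thm.~2.7 and \cite{Isogroot} Thms.~5, 6, \S7, and no proof environment follows. So strictly speaking there is nothing in the paper to compare your argument against. That said, your sketch does capture the general shape of what those references actually do --- a case-by-case local analysis comparing $\sigma_\phi$ with explicit root-number formulas (Rohrlich's classification for $l\ne p$, the ordinary filtration and $\epsilon$-factor computations for $l=p$, etc.) --- so you have the right overall strategy in mind.

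There are, however, some concrete slips. Over $\K=\C$ the two sides are not ``trivially $+1$'': one has $w(E/\C)=-1$, and since $\phi_\C\colon E(\C)\to E'(\C)$ is surjective with kernel of order $p$, $\sigma_\phi(E/\C)=(-1)^{\ord_p(1/p)}=-1$; the Artin/Hilbert term is trivial over $\C$, so both sides equal $-1$. Over $\R$ the symbol term need not be trivial either --- $(-1,\C/\R)=-1$, since $-1$ is not a norm from $\C$ --- so it genuinely enters the matching and you cannot absorb it into a hand-wave. More generally, your description of cases (3)--(5) (ordinary filtration, Tate curve, formal-group expansion) names the right tools but is far too coarse to be verifiable; in \cite{Isogroot} and \cite{CFKS} these cases require delicate $\epsilon$-factor computations and careful bookkeeping of $\sigma_\phi$ via the formal group, and a checkable proof would need to actually carry those through rather than gesture at the devices. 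None of this is a failure of idea --- the structure you propose is the one used in the literature --- but as written the sketch contains errors in the easy Archimedean case and is not yet a proof in the hard $p$-adic cases.
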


\begin{corollary}
\label{oldisoparity}
Let $K$ be a number field and $E/K$ an elliptic curve
that \hbox{admits} a $K$-rational isogeny $\phi:E\to E'$ of prime degree~$p$.
Assume~that at primes $v|p$ of $K$, the curve $E/K_v$ satisfies the
relevant condition of \hbox{Theorem~\ref{isogknown}.} Then
$$
  (-1)^{\rksel EKp} = w(E/K).
$$
\end{corollary}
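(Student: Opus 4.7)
The strategy is to combine Cassels' formula with the local root number formulae of Theorem~\ref{isogknown}, and to eliminate the correction factors by global reciprocity.

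First, apply Theorem~\ref{cassels} to write
$$
  (-1)^{\rksel EKp} = \prod_v \sigma_\phi(E/K_v).
$$
At every place $v$ of $K$, I then want to replace $\sigma_\phi(E/K_v)$ by $w(E/K_v) \cdot \epsilon_v^{-1}$, where $\epsilon_v$ is the correction factor appearing in Conjecture~\ref{isogrootconj}. This is legitimate provided the conjecture is known at $v$. For Archimedean $v$ use Theorem~\ref{isogknown}(1); for non-Archimedean $v$ with residue characteristic different from $p$ use~(2); for $v \mid p$ the hypothesis of the corollary is precisely that one of (3)--(5) applies. Hence the conjecture is known at every place, and multiplying over $v$ gives
$$
  (-1)^{\rksel EKp} \;=\; \prod_v w(E/K_v) \cdot \prod_v \epsilon_v^{-1}
  \;=\; w(E/K) \cdot \prod_v \epsilon_v^{-1},
$$
so it suffices to show that $\prod_v \epsilon_v = 1$.

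For $p$ odd, $\epsilon_v = (-1, K_v(\ker\phi)/K_v)$, and $K(\ker\phi)/K$ is a global extension with completions $K_v(\ker\phi)$ at each $v$. The product formula for the Artin symbol of the fixed global element $-1 \in K^*$ then gives $\prod_v \epsilon_v = 1$. For $p = 2$, note that a $K$-rational 2-isogeny has a $K$-rational non-trivial 2-torsion point in its kernel, so we may choose a global Weierstrass model of the form $y^2 = x^3 + ax^2 + bx$ over $K$ with $\ker\phi = \{O, (0,0)\}$. With this single global choice of $a,b$, the factor $\epsilon_v$ is a product of Hilbert symbols in $K_v$ of the fixed elements $a, b, -2, -2a, a^2-4b \in K^*$ (with the case split $a = 0$ vs.\ $a \ne 0$ being global). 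Hilbert reciprocity $\prod_v (x,y)_{K_v} = 1$ for each pair of fixed global elements $x,y$ then gives $\prod_v \epsilon_v = 1$.

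The main obstacle is really just the hypothesis-checking at primes above $p$, which is handed to us by the statement of the corollary. Everything else is formal: Cassels' formula converts the Selmer parity into a product of local $\sigma_\phi$; the isogeny root number conjecture converts each $\sigma_\phi$ into the corresponding local root number up to an explicit Artin or Hilbert symbol; and the product formula for these symbols forces the global correction to be trivial.
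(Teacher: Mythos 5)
Your proof is correct and follows essentially the same route as the paper: Cassels' formula plus Theorem~\ref{isogknown} at each place, with the global product of Artin/Hilbert symbols collapsing to $1$ by reciprocity, noting (as you do) that for $p=2$ one fixes a global model $y^2=x^3+ax^2+bx$ so that the case split $a=0$ vs.\ $a\ne 0$ is made once and for all. The paper's proof is just a terser version of exactly this argument.
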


\begin{proof}
If $p=2$, choose a model for $E/K$ of the form $y^2=x^3+ax^2+bx$
with $\ker\phi=\{O,(0,0)\}$.
Now combine Theorems \ref{cassels} and \ref{isogknown}. The product of
Artin/Hilbert symbols over all places of $K$ is 1 by the product formula.
\end{proof}

Now we prove the conjecture for $p=2$ and 3 by a continuity argument.
First observe that for $p=2$ the Hilbert symbol term in
Conjecture \ref{isogrootconj} is continuous:

\begin{lemma}
\label{abcont}
Let $\K$ be a local field of characteristic 0. The function
$$
h(a,b)=
\leftchoice
    {(a,-b)_\KK(-2a,a^2-4b)_\KK}{a\ne 0}
    {(-2,-b)_\KK}{a=0.}
$$
is continuous for $a,b\in\K$ with $b\ne0$ and $a^2\!-\!4b\ne 0$.
\end{lemma}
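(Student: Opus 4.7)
The function $h$ has two pieces, so my plan is to handle the region $a\ne 0$ separately from the points $(0,b_0)$ where the defining formula switches. Away from $a=0$, continuity is immediate from Lemma \ref{hilbcont}: in a neighbourhood of any $(a_0,b_0)$ with $a_0$, $b_0$, and $a_0^2-4b_0$ all non-zero, both Hilbert symbols $(a,-b)_\K$ and $(-2a,a^2-4b)_\K$ have all four arguments in $\K^*$, and so their product is continuous. The real work is therefore at points $(0,b_0)$ with $b_0\ne 0$.

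At such a point I would rewrite the $a\ne 0$ formula using bilinearity of the Hilbert symbol so that the dependence on $a$ becomes manifestly harmless. Writing $-2a=(-2)\cdot a$ and expanding,
$$
h(a,b) = (-2,a^2-4b)_\K \cdot (a,-b)_\K \cdot (a,a^2-4b)_\K = (-2,a^2-4b)_\K \cdot (a,b(4b-a^2))_\K,
$$
where I used $(a,-b)_\K(a,a^2-4b)_\K = (a,-b(a^2-4b))_\K = (a,b(4b-a^2))_\K$. The first factor is continuous at $(0,b_0)$ by Lemma \ref{hilbcont} and tends to $(-2,-4b_0)_\K=(-2,-b_0)_\K$, which matches $h(0,b_0)$. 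It therefore suffices to show that the second factor is identically $1$ throughout some neighbourhood of $(0,b_0)$.

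For this, I would note that $b(4b-a^2)=(2b)^2\bigl(1-a^2/(4b)\bigr)$. Since $(\K^*)^2$ is an open subgroup of $\K^*$ for every local field $\K$ of characteristic zero (open of finite index when $\K$ is non-Archimedean, and equal to $\R_{>0}$ or $\C^*$ in the Archimedean cases), and $1-a^2/(4b)\to 1$ as $(a,b)\to(0,b_0)$, the factor $1-a^2/(4b)$ lies in $(\K^*)^2$ for all $(a,b)$ sufficiently close to $(0,b_0)$. Consequently $b(4b-a^2)$ is a square there, so $(a,b(4b-a^2))_\K=1$, completing the argument. The only genuinely non-formal input is the openness of the subgroup of squares in $\K^*$; the rest is bookkeeping with the bilinearity of the Hilbert symbol, which is where I would expect the mildest risk of a sign error to creep in.
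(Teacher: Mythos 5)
Your argument is correct and is essentially the paper's own: both handle $a\ne 0$ via Lemma \ref{hilbcont} and, near $a=0$, use bilinearity of the Hilbert symbol together with the observation that $1-\tfrac{a^2}{4b}$ is close to $1$ and hence a square (openness of $(\K^*)^2$), which kills the troublesome factor. The only difference is cosmetic: you collect the $a$-dependence into $(a,\,b(4b-a^2))=(a,\square)$, while the paper collects it into $(-2a,\,1-\tfrac{a^2}{4b})=(-2a,\square)$ and then simplifies the remainder directly to $(-2,-b)$.
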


\begin{proof}
For $a\ne 0$ this is Lemma \ref{hilbcont}. When $|a|_\K$ is small,
writing $(,)$ for $(,)_\K$,
\beq
  (a,-b)(-2a,a^2-4b)&=&
  (a,-b)(-2a,1-\tfrac{a^2}{4b})(-2a,-4b) \cr&=&
  (a,-b)(-2a,\square)(a,-4b)(-2,-4b)=(-2,-b),
\eeq
because elements of $\K$ close to 1 are squares.
\end{proof}

\begin{theorem}
\label{isogroot23thm}
Conjecture \ref{isogrootconj} holds for $p=2$ and $p=3$.
\end{theorem}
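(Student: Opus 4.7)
The strategy is the global-to-local continuity method used to prove Theorem \ref{kratunpf}, with Cassels' formula (Theorem \ref{cassels}) playing the role of Kramer's theorem \ref{krathm}, and $\sigma_\phi$ playing the role of $\kappa$. By Theorem \ref{isogknown}(1) we may assume $\K$ is non-Archimedean. Realise $\K$ as the completion of a totally real number field $K$ at a place $v_0$ by Lemma \ref{totreal}.

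The core step is to construct a global elliptic curve $E/K$, equipped with a $K$-rational $p$-isogeny (still denoted $\phi$), such that: (i) $(E,\phi)$ is $v_0$-adically close to $(\E,\phi)$, so that all continuous local invariants at $v_0$ coincide; (ii) $E$ acquires multiplicative reduction at some auxiliary place $v_1$ of $K$; and (iii) at every place $v\ne v_0$ of $K$ with $v\mid p$, the pair $(E,\phi)/K_v$ satisfies the relevant hypothesis of Theorem \ref{isogknown} --- good ordinary or multiplicative reduction when $p=2$, and semistable reduction when $p=3$. For $p=2$ this is straightforward: write $\E\colon y^2=x^3+ax^2+bx$ with $\ker\phi=\{O,(0,0)\}$ and approximate $(a,b)\in\K^2$ by totally real elements of $K$ using weak approximation, simultaneously securing (i)--(iii). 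For $p=3$ we exploit that the modular curve $Y_0(3)\cong\mathbb{A}^1$ parametrises isogeny-pairs up to twist; approximate the relevant $\K$-point of $Y_0(3)$ by a $K$-point via weak approximation, and adjust a quadratic twist so that the resulting global pair matches $\E$ at $v_0$.

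With $E/K$ in hand, Theorem \ref{2parthm} applies (as $E$ has non-integral $j$-invariant at $v_1$), giving $(-1)^{\rksel EKp}=w(E/K)$. Combined with Cassels' formula this yields
$$\prod\nolimits_v \sigma_\phi(E/K_v)\;=\;(-1)^{\rksel EKp}\;=\;w(E/K)\;=\;\prod\nolimits_v w(E/K_v).$$
By Theorem \ref{isogknown}, for every $v\ne v_0$ one has $w(E/K_v)=\sigma_\phi(E/K_v)\cdot\eta_v$, where $\eta_v$ is the Hilbert symbol term (for $p=2$) or the Artin symbol $(-1,K_v(\ker\phi)/K_v)$ (for $p=3$). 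Since $\prod_v\eta_v=1$ by the product formula for Hilbert and Artin symbols, the relation above forces Conjecture \ref{isogrootconj} at $v_0$ as well; it then transfers to $(\E,\phi)/\K$ by continuity of the root number and $\sigma_\phi$ (Proposition \ref{continuity}), of the Hilbert symbol term (Lemmas \ref{hilbcont} and \ref{abcont}), and of the Galois action on $\ker\phi$ (Kisin's local constancy of $T_l E$). The main obstacle is the construction in step (iii) for $p=3$: one must preserve the 3-isogeny globally while also tracking the twist class so that $K(\ker\phi)/K$ restricts to the prescribed quadratic character at $v_0$. This is possible because $Y_0(3)$ is a rational curve and the twist class provides an independent degree of freedom accessible to weak approximation.
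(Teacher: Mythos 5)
Your overall strategy is exactly the paper's: deform $(\E,\phi)/\K$ to a globally defined pair $(E,\phi)/K$ over a totally real field with good behaviour at the other places above $p$, apply Theorem \ref{2parthm} and Cassels' formula globally, use Theorem \ref{isogknown} at $v\neq v_0$, and transfer back by continuity. Two details need tightening, and they are precisely where the paper is more careful.

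First, the continuity citation for the Artin symbol term at $p=3$ is wrong as stated. You invoke Kisin's local constancy of $T_l E$ to control the Galois action on $\ker\phi$, but Kisin's theorem (and Proposition \ref{continuity} as quoted) applies only to $T_l E$ with $l$ different from the residue characteristic. For $p=3$ the troublesome case is exactly $\K/\Q_3$, so $\ker\phi\subset E[3]$ is not covered. The paper sidesteps this by writing down an explicit two-parameter family
$E_{a,b}:y^2=x^3+a(x-b)^2$, $E'_{a,b}:y^2=x^3+ax^2+18abx+ab(16a-27b)$, with an explicit isogeny $\phi_{a,b}$; the coordinates of a generator of $\ker\phi_{a,b}$, hence the quadratic extension $\K(\ker\phi)$, are algebraic in $(a,b)$, so Krasner's lemma gives local constancy of $(-1,\K(\ker\phi)/\K)$ directly, without any Tate-module input. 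You need either this explicit family or some replacement for Kisin when $l=p$.

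Second, your $p=3$ construction via $Y_0(3)\cong\mathbb{A}^1$ plus an auxiliary quadratic twist is in principle workable, but it leaves more to check: you must simultaneously approximate a point of $Y_0(3)$ and a class in $\K^*/\K^{*2}$, and then verify that all quantities in Conjecture \ref{isogrootconj} (the root number, $\sigma_\phi$, and the Artin symbol) depend continuously on both parameters. The paper's single explicit family absorbs both degrees of freedom into the two Weierstrass coefficients $(a,b)$, after which Proposition \ref{continuity} and Krasner do all the work in one stroke; that is why the explicit models are in the proof, not merely decorative. With these two points addressed --- replace the Kisin citation by the explicit-family/Krasner argument, and use the paper's concrete parametrisation (or justify the $Y_0(3)$ version with the same level of care) --- your argument matches the paper's.
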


\begin{proof}
Any 2-isogeny $\phi: E\to E'$ of elliptic curves
over a field of characteristic not 2 or 3 has a model
\beq
  E_{a,b}&:& y^2=x^3+ax^2+bx, \cr
  E'_{a,b}&:& y^2=x^3-2ax^2+(a^2-4b)x,\cr
  \phi_{a,b}(x,y)&=&(x+a+bx^{-1}, y-byx^{-2}).
\eeq
Similarly, when $p=3$, there is a model
%? explain: the x-coordinate of the point in the kernel is rational,
\beq
  E_{a,b}&:& y^2=x^3+a(x-b)^2, \cr
  E'_{a,b}&:& y^2=x^3+ax^2+18abx+ab(16a-27b), \cr
  \phi_{a,b}(x,y)&=&(x\!-\!4abx^{-1}\!+\!4ab^2x^{-3},y\!+\!4abyx^{-2}\!-\!8ab^2yx^{-3}).
\eeq
Conversely, for any $a, b$ in the ground field the formulae do define a
$p$-isogeny, provided the resulting curves are non-singular
(equivalently if $a(a^2\!-\!4b)\ne 0$ for $p=2$, and if
$ab(4a+27b)\ne0$ for $p=3$).

Say our elliptic curves and the $p$-isogeny are in this form,
with $a,b\in\K$.
Pick a totally real field $K$ with completion $\K$ at a place
$v_0$ (Lemma \ref{totreal}).
If $A, B\in K$ are $v_0$-close to $a, b$, then
$E_{A,B}$ and $E'_{A,B}$ are non-singular, so they are elliptic curves
over $K$ with a $p$-isogeny.
By Lemma \ref{abcont} and Proposition~\ref{continuity},
%? for p=3 \ker\phi = (0,\pm a\sqrt b) varies continuously as well
all the terms in the conjectural formula are the same for
$E_{A,B}/K_{v_0}$ as for $E_{a,b}/\K$.

Moreover, by the weak approximation theorem, we may choose $A, B$
so that the set of places where $E_{A,B}$ has multiplicative reduction
is non-empty and includes all $v\ne v_0$ above $p$.
To satisfy the latter, take for example $(A, B)$ to be $v$-close to
$(22,-7)$ for $p=2$ and $(1,-12)$ for $p=3$.
%? \footnote{The curves $E_{22,-7}$ for $p=2$ and $E_{1,-12}$ for $p=3$
Because $E_{A,B}$ is defined over a totally real
field and has non-integral $j$-invariant,
$$
\begin{array}{ccl}
  \displaystyle\prod\nolimits_v w(E_{A,B}/K_v) &=& w(E_{A,B}/K)
    \>\>\>\citeeqref{2parthm}\>\>\>\displaystyle(-1)^{\rksel {E_{A,B}}Kp} \cr
    &\citeeqref{cassels}&\displaystyle \prod\nolimits_v \sigma_{\phi_{A,B}}(E_{A,B}/K_v)\\[5pt]
    &=&\displaystyle \prod\nolimits_v \sigma_{\phi_{A,B}}(E_{A,B}/K_v)\cdot
    \genfrac{(}{)}{0pt}{1}{\text{Artin/Hilbert symbol}}{\text{from Conjecture \ref{isogrootconj}}}.
\end{array}
$$
By Theorem \ref{isogknown}, the terms at $v$
in the first and the last expression agree for all $v\ne v_0$,
so they must agree at $v_0$ as well. Thus the conjecture holds for
$E_{A,B}/K_{v_0}$ and hence for $E_{a,b}/\K$.
\end{proof}

As in Corollary \ref{oldisoparity}, we deduce:

\begin{corollary}
\label{isogroot23}
Let $p=2$ or 3 and $K$ a number field.
The $p$-parity conjecture holds for every elliptic curve $E/K$
that admits a $K$-rational $p$-isogeny,
$$
  (-1)^{\rksel EKp} = (-1)^{\ord_p\frac{\CC EK}{\CC{E'\!}K}} = w(E/K),
$$
where $E'/K$ is the isogenous curve.
\end{corollary}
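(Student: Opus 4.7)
The plan is to follow exactly the strategy of Corollary \ref{oldisoparity}, which is now unconditional because Theorem \ref{isogroot23thm} has removed the reduction hypotheses from Conjecture \ref{isogrootconj} in the remaining cases $p=2,3$. First, if $p=2$, I would replace $E/K$ by an isomorphic curve in the normal form $y^2=x^3+ax^2+bx$ with $\ker\phi=\{O,(0,0)\}$, so that Conjecture \ref{isogrootconj} applies in the stated form at every completion $K_v$. If $p=3$, no preparation is needed; the relevant field $K(\ker\phi)$ is either $K$ itself or a quadratic extension, since Galois acts on $\ker\phi\cong\mathbb{Z}/3$ through $\operatorname{Aut}(\mathbb{Z}/3)=\{\pm1\}$.

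Next, I would combine the two inputs place-by-place. By Theorem \ref{isogroot23thm}, at each place $v$ of $K$ we have
$$
  w(E/K_v) = \sigma_\phi(E/K_v)\cdot \varepsilon_v,
$$
where $\varepsilon_v$ is either the Hilbert symbol expression (case $p=2$) or $(-1,K_v(\ker\phi)/K_v)$ (case $p=3$). Taking the product over all places of $K$, the global root number appears on the left, while the collection of local symbols $\prod_v \varepsilon_v$ is $1$: this is the product formula for the Hilbert symbol in the first case, and the product formula for the Artin symbol attached to the quadratic (or trivial) extension $K(\ker\phi)/K$ in the second. Cassels' formula (Theorem \ref{cassels}) then identifies $\prod_v \sigma_\phi(E/K_v)$ with $(-1)^{\rksel EKp}$, giving the desired equality $(-1)^{\rksel EKp}=w(E/K)$.

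It remains to recognise the middle expression. Choose a global invariant differential $\omega'$ on $E'/K$ and set $\omega=\phi^*\omega'$ on $E/K$. Using the second formula for $\sigma_\phi$ in Notation \ref{sigmanot}, one has at each place
$$
  \sigma_\phi(E/K_v) \;=\; (-1)^{\ord_p \, C(E'/K_v,\omega')/C(E/K_v,\omega)}.
$$
Multiplying over all $v$ and invoking the definition of $\CCbig{}{}{}$ (which is independent of the choice of global differential by the product formula), one finds
$$
  \prod\nolimits_v \sigma_\phi(E/K_v) \;=\; (-1)^{\ord_p \CC{E'}K/\CC EK} \;=\; (-1)^{\ord_p \CC EK/\CC{E'}K},
$$
the last equality because $\ord_p(x)\equiv\ord_p(1/x)\pmod 2$. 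This gives the middle equality in the statement.

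There is no real obstacle at this stage: the delicate work has already been done in Theorem \ref{isogroot23thm} via the global-to-local deformation argument, and what remains here is the straightforward bookkeeping familiar from Corollary \ref{oldisoparity}.
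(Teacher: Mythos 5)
Your proof is correct and follows exactly the route the paper intends: the paper literally writes ``As in Corollary~\ref{oldisoparity}, we deduce'' and leaves it there, so your place-by-place combination of Theorem~\ref{isogroot23thm} with Cassels' formula~\ref{cassels} and the product formula for the Hilbert/Artin symbols is precisely the argument. Your extra paragraph justifying the middle expression $(-1)^{\ord_p\CC EK/\CC{E'}K}$ via the second description of $\sigma_\phi$ in Notation~\ref{sigmanot} is a welcome bit of bookkeeping that the paper leaves implicit.
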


\section{Parity for twists coming from Brauer relations}
\label{stamroot}

The principal result of this section (Theorem \ref{tamrootBG}) is
a slight extension of the results of \cite{Tamroot} on the $p$-parity
conjecture for twists of elliptic curves.
First we illustrate it in the simplest possible setting,
when the Galois group is the symmetric group $\Sym_3$.
In fact, this is
the only case needed for the proof of Theorem \ref{imain} in \S\ref{sglobal}.

%? state conjecture somewhere?

\subsection{Example: Galois group $G\protect\cong\Sym_3$}

As in \S\ref{skratun} and \S\ref{sisogroot}, we begin with a formula
expressing a Selmer rank in terms of local data.
Recall \cite{Squarity} Thm.~4.11 (with $p=3$):\,%
\footnote{
  The contributions from $v|\infty$ to
  ${\frac{\CC EF\CC EK^2}{\CC EM\CC EL^2}}$
  cancel when using the same $K$-rational $\omega$ over each field.
  The definition of $C$ in \cite{Squarity} excludes infinite places,
  so the
  formula there does not need the ${\frac{\CC EK^2}{\CC EL^2}}$ term,
  as it is then a rational square.}
\begin{theorem}
\label{s3rksel}
Let
$F/K$ be an $\Sym_3$-extension of number fields, $M$ and $L$~
inter\-mediate fields of degree 2 and 3 over $K$, and $E/K$ an
elliptic curve. Then
$$
  \rksel EK3+\rksel EM3+\rksel EL3 \equiv \ord_3\tfrac{\CC EF\CC EK^2}{\CC EM\CC EL^2} \mod 2.
$$
\end{theorem}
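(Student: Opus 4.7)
My plan is to derive this as a specialisation of the regulator-constant formalism of \cite{Tamroot}, applied to a natural Brauer relation in $G=\Gal(F/K)\iso\Sym_3$ that reproduces the ratio appearing on the right.

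First I would identify the Brauer relation. Writing $H_1\!=\!1,\,H_2\!=\!G,\,H_3\!=\!C_3,\,H_4\!=\!C_2$ for representatives of the conjugacy classes of subgroups of $G$, so that $F^{H_i}$ equals $F, K, M, L$ respectively, the integer combination
$$
 \Theta \;:=\; [G/1] + 2[G/G] - [G/C_3] - 2[G/C_2]
$$
vanishes in the rational representation ring of $G$. This is immediate from the decompositions $\Ind_1^G\triv=\triv\oplus\sgn\oplus 2\rho$, $\Ind_{C_2}^G\triv=\triv\oplus\rho$ and $\Ind_{C_3}^G\triv=\triv\oplus\sgn$, where $\rho$ denotes the $2$-dimensional irreducible of $\Sym_3$. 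In particular the ratio in the statement is precisely $\prod_i\CCbig E{F^{H_i}}^{n_i}=\CC EF\CC EK^2/(\CC EM\CC EL^2)$.

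Next I would invoke the regulator-constant congruence. To each self-dual irreducible $\tau$ of $G$ one attaches a regulator constant $\RC_\Theta(\tau)\in\Q^*/(\Q^*)^2$; the main result of \cite{Tamroot} (in the version of \cite{Squarity} that incorporates archimedean periods into $\CCbig EK$) asserts that, provided $\Theta$ is non-trivial on a Sylow-$p$ subgroup of $G$,
$$
 \sum_\tau \langle\Xp EF,\tau\rangle\cdot\ord_p\RC_\Theta(\tau) \;\equiv\; \ord_p\prod_i \CCbig E{F^{H_i}}^{n_i} \pmod 2.
$$
For $p=3$ the Sylow is $C_3$, on which $\Theta$ does not vanish, and a direct computation of $\RC_\Theta$ on the three self-dual irreducibles $\triv,\sgn,\rho$ of $\Sym_3$ produces $\ord_3\RC_\Theta(\tau)\equiv 1\pmod 2$ in each case. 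Frobenius reciprocity then yields $\langle\Xp EF,\triv\rangle=\rksel EK3$, $\langle\Xp EF,\sgn\rangle=\rksel EM3-\rksel EK3$ and $\langle\Xp EF,\rho\rangle=\rksel EL3-\rksel EK3$ (using that $\Ind_{C_3}^G\triv=\triv\oplus\sgn$ and $\Ind_{C_2}^G\triv=\triv\oplus\rho$), so the left-hand side collapses to $\rksel EK3+\rksel EM3+\rksel EL3\pmod 2$, giving the theorem.

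The principal obstacle is the regulator-constant congruence itself, whose proof in \cite{Tamroot} requires identifying the discriminant of the Cassels--Tate-type pairing on each isotypic component of $\Xp EF$ with a product of local $C$-factors and performing a careful local analysis at primes of additive reduction above $3$. The archimedean places are painless, since the exponents $(+1,+2,-1,-2)$ sum to zero on every infinite place and so the archimedean contributions cancel inside $\CC EF\CC EK^2/(\CC EM\CC EL^2)$, in accordance with the footnote in \S\ref{ssnotation}.
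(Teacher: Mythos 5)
Your proof is correct and takes essentially the same route the paper itself carries out in the $\Sym_3$ example of \S\ref{sstamappl}: the relation $\Theta = \{1\} - 2\Cy_2 - \Cy_3 + 2G$, the computation $\RC_\Theta^{\Q_3}(\triv)=\RC_\Theta^{\Q_3}(\epsilon)=\RC_\Theta^{\Q_3}(\rho)=3$, and the congruence $\ord_3\RC_\Theta(\Xp EF)\equiv\ord_3 C(E,\Theta)$ underlying Theorem~\ref{squality} together recover exactly this statement (the paper's formal citation is to \cite{Squarity}~Thm.~4.11, where it is proved by direct Cassels--Tate manipulation rather than via the general regulator-constant machinery of \cite{Tamroot}). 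One cosmetic remark: the clause ``provided $\Theta$ is non-trivial on a Sylow-$p$ subgroup'' is not a hypothesis of the regulator-constant congruence---it is merely the criterion for the congruence to be non-vacuous, since otherwise $\ord_p\RC_\Theta(\tau)=0$ for every $\tau$---and the archimedean cancellation is best justified, as in the footnote to Theorem~\ref{s3rksel}, by noting that the ratio of periods over the four fields with a fixed $K$-rational $\omega$ is a rational square, rather than by the exponents summing to zero alone.
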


\noindent
We want to prove the parity conjecture for the left-hand side.
In other words, we claim that
\daggerequation{s3glo}{\hbox{$\dagger_{\text{\smaller[6]glo}}$}}{
  w(E/K)w(E/M)w(E/L) = \smash{(-1)^{\ord_3\frac{\CC EF\CC EK^2}{\CC EM\CC EL^2}}}.
}%
Both sides are products of local terms, and,
as in the Kramer-Tunnell case and the isogeny case,
we want to compare the contributions above each place $v$ of $K$.
It is elementary to check the following
(see e.g. \cite{Squarity} proof of Prop.~3.3):

\begin{fact}
\label{s3fact}
The contributions to each side of \eqref{s3glo} are trivial
whenever $v$ splits in $F$.
\end{fact}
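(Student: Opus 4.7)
The plan is to compare local contributions at $v$ on both sides of \eqref{s3glo}. Since $v$ splits completely in $F$, the decomposition group at any place of $F$ above $v$ is trivial in $G = \Gal(F/K) \cong \Sym_3$, so orbit--stabiliser gives $6$ places of $F$, $2$ places of $M$, and $3$ places of $L$ above $v$, each with completion $K_v$-isomorphic to $K_v$.

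For the left-hand side, I would compute the contribution at $v$ as
$$w(E/K_v) \prod_{w|v\text{ in }M} w(E/M_w) \prod_{w|v\text{ in }L} w(E/L_w) = w(E/K_v)^{1+2+3} = 1,$$
using $w(E/K_v) = \pm 1$.

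For the right-hand side, I would fix a single global invariant differential $\omega$ on $E/K$ and base change it to $M$, $L$ and $F$, so that every $C$-factor is taken with respect to the same $\omega$. Since every completion above $v$ is isomorphic to $K_v$, the contribution at $v$ to the ratio $\CC EF\CC EK^2/\CC EM\CC EL^2$ is
$$\frac{C(E/K_v,\omega)^{6+2}}{C(E/K_v,\omega)^{2+3\cdot 2}} = 1,$$
whose $\ord_3$ vanishes, so this side too contributes $(-1)^0 = 1$ at $v$.

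The only mild subtlety is to ensure that the $C$-factors are compatible under base change, i.e.\ that a single differential can be pulled back consistently to $K$, $M$, $L$, $F$ — once that bookkeeping is set up the fact is a one-line exponent count, so no serious obstacle is expected.
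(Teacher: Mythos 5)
Your exponent count is correct, but only for the case where $v$ splits \emph{completely} in $F$, i.e.\ where the decomposition group $G_v\subset\Sym_3$ is trivial. The way Fact \ref{s3fact} is used in the proof of Theorem \ref{thms3loc}, it has to cover every place whose decomposition group is a \emph{proper} subgroup of $\Sym_3$ (the case $G_v=\Sym_3$ being the one handled by the local Theorem \ref{thms3loc} itself). So you are missing $G_v\cong\Cy_2$ and $G_v\cong\Cy_3$, and in those cases the completions above $v$ in $M$, $L$, $F$ are \emph{not} all isomorphic to $K_v$.

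For $G_v\cong\Cy_2$: writing $\F$ for the quadratic completion of $F$ at $v$, one finds $M$ has one place ($\cong\F$), $L$ has two places ($\cong K_v$ and $\cong\F$), and $F$ has three places (all $\cong\F$). The left side then contributes $w(E/K_v)\,w(E/\F)\,w(E/K_v)\,w(E/\F)=1$, and the ratio of $C$-terms is $\tfrac{C(E/\F)^3\,C(E/K_v)^2}{C(E/\F)\cdot\bigl(C(E/K_v)C(E/\F)\bigr)^2}=1$; so this case is again an exponent count. For $G_v\cong\Cy_3$, however, $M$ has two places ($\cong K_v$), $L$ has one place ($\cong\F$, a cubic cyclic extension of $K_v$), $F$ has two places ($\cong\F$), and the left side contributes $w(E/K_v)^3\,w(E/\F)=w(E/K_v)\,w(E/\F)$. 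To see this is $1$ you need $w(E/\F)=w(E/K_v)$ for cubic cyclic $\F/K_v$, which is not a matter of $w=\pm1$; it follows from inductivity in degree $0$ together with $w(E/K_v,\chi)w(E/K_v,\bar\chi)=|w(E/K_v,\chi)|^2=1$ (cf.\ \cite{Tamroot} Prop.~A.2(3)). That is the missing input in your argument; the rest (including the care about a single global $\omega$) is fine.
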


The remaining case when $v$ does not split in $F$ leads to the following
purely local problem:

\begin{theorem}
\label{thms3loc}
Let $\F/\K$ be an $\Sym_3$-extension of $p$-adic fields, $\M$ and $\L$
intermediate fields of degree 2 and 3 over $\K$, and $\E/\K$ an
elliptic curve. Then
\daggerequation{s3loc}{\hbox{$\dagger_{\text{\smaller[6]loc}}$}}{
  w(\E/\K)w(\E/\M)w(\E/\L) = (-1)^{\ord_3\frac{C(\E/\F,\omega)}{C(\E/\M,\omega)}}.
}%
for any invariant differential $\omega$ for $E/\K$.
\end{theorem}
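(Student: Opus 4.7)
The strategy is the global-to-local deformation used in Theorems \ref{kratunpf} and \ref{isogroot23thm}. The plan is to realise $\F/\K$ as the completion at some place $v_0$ of a totally real $S_3$-extension $F/K$ of number fields, choose an elliptic curve $E/K$ that approximates $\E/\K$ at $v_0$ and is otherwise well-behaved, then use Nekov\'a\v r's 3-parity theorem to obtain the global identity \eqref{s3glo}, whose local factors at every place $v\ne v_0$ are trivial by construction, forcing the local identity \eqref{s3loc} at $v_0$.

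Concretely, first use Proposition \ref{continuity} and weak approximation to find a totally real number field $K$ with completion $\K$ at a place $v_0$ and an elliptic curve $E/K$ whose Weierstrass coefficients are $v_0$-adically close enough to those of $\E$ that all local invariants appearing in \eqref{s3loc} agree between $E/K_{v_0}$ and $\E/\K$. By arranging an auxiliary bad place we may assume $E$ has multiplicative reduction somewhere, so $j(E)\notin\mathcal O_K$. Let $S$ be the (finite) set of bad reduction places of $E/K$, with $v_0\in S$. Next, via Lemma \ref{totreal} plus weak approximation on the defining polynomials, construct a totally real Galois $S_3$-extension $F/K$ such that $F_{w_0}/K_{v_0}\cong_\K \F/\K$ at some $w_0\mid v_0$, and such that every other place in $S\setminus\{v_0\}$ splits completely in $F/K$. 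Write $M, L$ for the intermediate fields of degrees $2$ and $3$; both are totally real.

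Since $K, M, L$ are totally real and $j(E)\notin\mathcal O_K$, Nekov\'a\v r's theorem \cite{NekIV} Thm.~1 gives the 3-parity conjecture for $E$ over each of them. Combined with Theorem \ref{s3rksel}, this yields the global identity \eqref{s3glo} for $E/K$. Both sides factor over places $v$ of $K$, and the contributions compare as follows: at $v=v_0$ one obtains exactly the local identity \eqref{s3loc} we are after; at $v\in S\setminus\{v_0\}$ the place $v$ is split in $F/K$ by construction, so Fact \ref{s3fact} makes both sides trivial; at $v\notin S$ the curve $E/K_v$ has good reduction, all non-archimedean root numbers are $+1$, archimedean root numbers cancel in the product (all fields being totally real), and a short computation with normalised absolute values shows that the ratio $\CC EF\CC EK^2/(\CC EM\CC EL^2)$ evaluates to $1$ at $v$, contributing $0$ to $\ord_3$. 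Hence \eqref{s3glo} collapses to \eqref{s3loc} at $v_0$ for $E$, and by the approximation this gives \eqref{s3loc} for $\E/\K$.

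The main technical point will be the simultaneous engineering of $E$ and $F$: after choosing $E/K$ with the specified reduction behaviour one must construct $F/K$ that both realises $\F/\K$ at $v_0$ and splits at every bad-reduction place of $E$ other than $v_0$. This is a combination of Krasner's lemma with weak approximation on the Galois-theoretic data, along the same lines as in the proofs of Theorems \ref{kratunpf} and \ref{isogroot23thm}.
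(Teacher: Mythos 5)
Your overall strategy is the same global-to-local continuity argument as the paper's, but the way you engineer the global situation differs, and one step is not quite justified as written.

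The paper's proof proceeds in the opposite order: it first invokes Lemma \ref{totreal} to produce the totally real $\Sym_3$-extension $F/K$ realising $\F/\K$ at $v_0$ (the lemma constructs $K$ and $F$ \emph{simultaneously} -- $K$ is the fixed field of a decomposition group inside an approximating splitting field), and only then chooses $E/K$ close to $\E$ at $v_0$ and with \emph{semistable} reduction at all $v\ne v_0$ above $2$ and $3$. With $F/K$ fixed, $E$ is cheap to arrange by weak approximation on the Weierstrass coefficients. The comparison at $v\ne v_0$ is then settled by the already-known cases of the local formula (\cite{Squarity} Prop.~3.3), which cover all places once $E$ is semistable at $v\ne v_0$ above $2,3$. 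You instead fix $K$ and $E$ first and then want a totally real $\Sym_3$-extension $F/K$ of the \emph{given} $K$ with prescribed local behaviour at $v_0$ and splitting completely at the remaining bad places of $E$. Lemma \ref{totreal} does not produce such an $F$ over a pre-chosen $K$: it constructs $K$ as a by-product of the approximation. What you need is a small inverse-Galois / Grunwald--Wang-style argument (which does work for $\Sym_3$ -- approximate the cubic resolvent $v_0$-adically, split it at the prescribed places and make its roots totally real, then pass to the Galois closure, which is forced to have group $\Sym_3$ since the decomposition group at $w_0$ already has order $6$), but you should state this rather than cite \ref{totreal} for something it does not directly give.

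Two further observations. First, the paper opens its proof by noting the formula is already established outside the case (residue characteristic $2$ or $3$, additive reduction, $\F/\K$ ramified); you never mention this step, and in fact your variant does not strictly need it, since you make all bad places $v\ne v_0$ split completely and handle good-reduction places directly -- that alternative is legitimate, and the ``short computation'' you appeal to at good places is indeed correct (the exponents $6+2$ and $2+2\cdot 3$ balance). Second, minor tidiness: if $\E/\K$ has good reduction, then $v_0\notin S$, so your ``$v_0\in S$'' convention needs a word; the statement is vacuous there anyway since all sides are trivial.
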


\begin{proof}
The formula is brutally worked out in \cite{Squarity} Prop. 3.3,
except for the case when $\K$ has residue characteristic 2 or 3,
$\E$ has additive reduction and $\F/\K$ is ramified.

Now we use a continuity argument to settle this remaining case.
Pick an $\Sym_3$-extension $F/K$ of totally real number fields with
completions $K_{v_0}=\K$ and $F_{v_0}=\F$ for some prime $v_0$ of $K$
(Lemma \ref{totreal}).
Choose an elliptic curve $E/K$ which is close enough $v_0$-adically to $\E$,
with semistable reduction at all $v\ne v_0$ above 2 and 3 and at least
one prime of multiplicative reduction.
By `close enough' we mean that the left- and the right-hand sides of
\eqref{s3loc} are the same for $E$ and $\E$ (Proposition \ref{continuity});
note that the right-hand side is independent of the choice of $\omega$.

By the 3-parity conjecture for $E$
over the intermediate fields of $F/K$ (Thm. \ref{2parthm})
and Theorem \ref{s3rksel}, we find that \eqref{s3glo} holds.
Since the terms in it agree above all $v\ne v_0$ by
\ref{s3fact} and the known cases of the formula,
they must agree at $v_0$ as well.
This proves \eqref{s3loc} for $E/K_{v_0}$ and hence for $\E/\K$ as well.
\end{proof}

As a corollary of \ref{s3fact} and \ref{thms3loc},
the formula \eqref{s3glo} holds for all elliptic curves
in all $\Sym_3$-extensions of number fields $F/K$.
Combining it with Theorem \ref{s3rksel}, we get

\begin{theorem}
\label{thms3glo}
Let $F/K$ be an $\Sym_3$-extension of number fields, and let
$M$ and $L$ be intermediate fields of degree 2 and 3 over $K$, respectively.
For every elliptic curve over $K$,
$$
  \smminusone^{\hbox{\smaller[4]$\rksel EK3\!+\!\rksel EM3\!+\!\rksel EL3$}} \!=\!
    \smash{\smminusone^{\ord_3\!\frac{\CC EF\CC EK^2}{\CC EM\CC EL^2}}}
  \!=\! w(\Esmslash K)w(\Esmslash M)w(\Esmslash L).
$$
\end{theorem}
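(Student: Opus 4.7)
The plan is to assemble Theorem \ref{thms3glo} from the results already established in this section, with essentially no new work required. The theorem asserts two equalities: the first identifies the parity of a sum of Selmer ranks with an arithmetic $\ord_3$-expression, and the second identifies the same $\ord_3$-expression with a product of three root numbers. The first equality is exactly Theorem \ref{s3rksel}, so the only substantive task is to prove the global root number identity \eqref{s3glo}.

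To prove \eqref{s3glo}, I would factor both sides into local contributions at the places $v$ of $K$. The left-hand side factors via the defining product $w(E/k)=\prod_v w(E/k_v)$ applied to each of $K$, $M$, $L$. For the right-hand side, I would fix a single $K$-rational invariant differential $\omega$ for $E$ and use it (by pullback to $M$, $L$, $F$) to compute $\CCbig EK$, $\CCbig EM$, $\CCbig EL$, $\CCbig EF$ term by term as products over places of $K$; the ratio inside the $\ord_3$ is independent of $\omega$, and the infinite-place contributions cancel, as noted in the footnote preceding Theorem \ref{s3rksel}.

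Next I would split the places $v$ of $K$ into two cases according to their behaviour in the $\Sym_3$-extension $F/K$. When $v$ splits completely, Fact \ref{s3fact} supplies that the local contributions on both sides are trivial, so these places contribute nothing. When $v$ does not split, the contributions above $v$ on the two sides depend only on the local $\Sym_3$-extension $F_w/K_v$ and on $E/K_v$, and their equality is precisely the statement \eqref{s3loc} of Theorem \ref{thms3loc}. Multiplying these local identities over all $v$ gives \eqref{s3glo}.

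Combining \eqref{s3glo} with Theorem \ref{s3rksel} completes the proof of both equalities in the theorem. There is no significant obstacle here: the genuine difficulty, namely the wild residue-characteristic-$2$ and $3$ additive-reduction cases of the local formula, has already been handled in Theorem \ref{thms3loc} via the deformation-to-totally-real-fields method; the present theorem is simply the clean global packaging of that local result together with the algebraic Selmer formula of Theorem \ref{s3rksel}.
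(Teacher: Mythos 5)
Your proposal reproduces the paper's own argument: the first equality is Theorem \ref{s3rksel}, and the second is exactly \eqref{s3glo}, obtained place-by-place from Fact \ref{s3fact} together with Theorem \ref{thms3loc}. One small imprecision: the correct case division is ``$v$ splits in $F$'' (decomposition group a proper subgroup of $\Sym_3$, handled by Fact \ref{s3fact}) versus ``$v$ does not split'' (decomposition group all of $\Sym_3$, so $F_w/K_v$ is an $\Sym_3$-extension, handled by Theorem \ref{thms3loc}); as written, your dichotomy ``splits completely'' versus ``does not split'' leaves out the places with decomposition group $\Cy_2$ or $\Cy_3$, though these are in fact covered by Fact \ref{s3fact}.
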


\subsection{General case}

Now suppose $E/K$ is an elliptic curve, and $F/K$ any
Galois extension of number fields with Galois group $G$
(generalising $G=\Sym_3$ above).
The dual $p^\infty$-Selmer group $\Xp EF$ is a $\Q_pG$-representation,
and the $p$-parity conjecture for twists asserts that
$$
  (-1)^{\blangle \tau, \Xp EF\brangle} = w(E/K,\tau),
$$
for every self-dual (complex) representation $\tau$ of $G$.
There are twists for which the left-hand side can be expressed as a product
of local terms, and one may hope to prove the conjecture for them
by a place-by-place comparison.

This is what is done in \cite{Tamroot} for a class of such twists,
except that, as usual, the worst reduction cases are not considered there.
Knowing the $p$-parity conjecture over totally real fields allows us to
remove the constraints on the reduction for some of these twists,
via a continuity argument.

After recalling the set $\Ttp$ of twists for which
\cite{Tamroot} gives a local $p$-parity formula, we carry out
the continuity argument
(Theorem \ref{tamrootBG} and Corollary~\ref{gltamroot}).
We end with a list of examples and applications
in \S\ref{sstamappl}.

\medskip

Let $G$ be a finite group, and write $R_G$ for its (complex) representation
ring. Write $\cH$ for the set of subgroups of $G$ up to
conjugacy; its elements are in one-to-one correspondence
with transitive $G$-sets via $H\mapsto G/H$.
The {\em Burnside ring\/} of $G$ is the free abelian group $\Z\cH$, and
the map $H\mapsto \C[G/H]$ extends by linearity to a natural map
$\Z\cH \lar R_G$.
Following \cite{Tamroot}, elements in its kernel
are called {\em $G$-relations}.
Thus $\Theta=\sum n_i H_i$ is a $G$-relation if
$$
  \bigoplus\nolimits_i \C[G/H_i]^{\oplus n_i} \iso 0
$$
as a virtual representation.

Fix a prime $p$ and an identification $\bar\Q_p\!\iso\!\C$.
For a self-dual $\Q_p G$-represen\-tation $\rho$
and a $G$-relation $\Theta$, define the {\em regulator constant}
$$
  \RC_{\Theta}^{\Q_p}(\rho) = \RC_\Theta(\rho) = \prod\nolimits_i {\det(\tfrac{1}{|H|}\lara|\rho^{H})}^{n_i}
    \>\>\in\Q_p^*/\Q_p^{*2},
$$
where $\lara$ is any non-degenerate $G$-invariant $\bar\Q_p$-valued bilinear
pairing on~$\rho$, and $\det(\lara|V)$ denotes $\det(\langle v_i,v_j\rangle_{i,j})$
for any basis $v_i$ of $V$.
By \cite{Tamroot} \S2.2, $\RC_\Theta(\rho)$ is a well-defined element of $\Q_p^*/\Q_p^{*2}$
and is independent of $\lara$. We define
$$
  \Ttp = \left\{\begin{tabular}{c}
            self-dual $\bar\Q_p G$-representations $\tau$ such that\cr
            $\blangle\tau,\rho\brangle \equiv \ord_p \RC_\Theta(\rho) \!\!\!\mod 2$\cr
            for all self-dual $\Q_p G$-representations $\rho$.
         \end{tabular}\right\}
$$
With the identification $\bar\Q_p\iso\C$, we may consider $\Ttp\subset R_G$.

Let $F/K$ be a Galois extension of local or global fields
with Galois group~$G$, and $E/K$ an elliptic curve.
For a $G$-relation $\Theta=\sum n_i H_i$ for brevity we write
$$
  C(E,\Theta)=\prod_i C(E/F^{H_i},\omega)^{n_i}
$$
for any invariant differential $\omega$ for $E/K$.
%? Or Tamroot Example 2.37 - Vovian lemma
Finally, if $K$ is a number field, we can decompose
$$
  C(E,\Theta) = \prod_v C_{w|v}(E,\Theta),
$$
where $v$ runs over the places of $K$, and $C_{w|v}(E,\Theta)$ picks all
terms in the product defining $C(E,\Theta)$ from all places above $v$.
(It is not hard to see that $C(E,\Theta)$ in the local case
and $C_{w|v}(E,\Theta)$ in the number field case are
independent of the choice of $\omega$,
see e.g. \cite{Tamroot}, proof of Cor. 3.4.)

According to \cite{Selfduality} Thms. 1.1, 1.5 (or \cite{Tamroot} Thm. 1.14),

\begin{theorem}
\label{squality}
Let $F/K$ be a Galois extension of number fields with Galois group $G$,
$p$ a prime and $\Theta$ a $G$-relation.
For every elliptic curve $E/K$,
the $\Q_p G$-representation $\Xp EF$ is self-dual, and
$$
\blangle\tau,\Xp EF\brangle
\equiv
\ord_p C(E,\Theta) \mod 2\>
\qquad \text{for all $\tau\in\Ttp$}.
$$
\end{theorem}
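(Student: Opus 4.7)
The plan is to prove the two claims separately. For the self-duality of $\Xp EF$, the key input is Flach's generalisation of the Cassels-Tate pairing to abelian varieties, which provides a skew-symmetric, $\Gal(F/K)$-equivariant pairing on $\sha(E/F)$ modulo its divisible part. Combined with the natural pairing between $E(F)\tensor(\Q_p/\Z_p)$ and the corresponding quotient of the $p^\infty$-Selmer group, this yields a non-degenerate $G$-equivariant $\Q_p$-bilinear form on $\Xp EF$, identifying it with its contragredient as a $\Q_p G$-representation.

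For the congruence, I would reduce it to a purely representation-theoretic statement. By the very definition of $\Ttp$, for any self-dual $\Q_p G$-representation $\rho$ and any $\tau\in\Ttp$,
$$\blangle\tau,\rho\brangle\equiv\ord_p\RC_\Theta(\rho)\mod 2.$$
Applying this with $\rho=\Xp EF$, which is self-dual by the first step, reduces the theorem to
$$\ord_p\RC_\Theta(\Xp EF)\equiv\ord_p C(E,\Theta)\mod 2. \qquad(\star)$$

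To prove $(\star)$ I would compute $\RC_\Theta(\Xp EF)$ using the $G$-equivariant pairing constructed in the first step. For each subgroup $H_i\le G$, the determinant of this pairing on $\Xp EF^{H_i}$ equals, modulo $\Q_p^{*2}$, a quotient of BSD-style invariants over $F^{H_i}$: the Tamagawa-period product $C(E/F^{H_i},\omega)$, the order of $\sha(E/F^{H_i})$, the torsion order $\#E(F^{H_i})_{\tors}$, the regulator, and the leading $L$-value. Taking the $n_i$-weighted product and invoking the $G$-relation $\sum n_i\Ind_{H_i}^G\triv=0$: the $L$-values collapse to a rational square by Artin formalism, the orders of $\sha$ are squares by the Cassels-Tate alternating pairing, and the torsion and regulator contributions likewise assemble into squares by a group-theoretic calculation exploiting the $G$-relation. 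What survives is exactly $C(E,\Theta)$ in $\Q_p^*/\Q_p^{*2}$, yielding $(\star)$.

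The main obstacle is $(\star)$: identifying the discriminant of the Selmer pairing with the Tamagawa-period product $C(E,\Theta)$ is a delicate local-to-global calculation involving Poitou-Tate duality and a careful place-by-place comparison of local invariants, together with the $G$-relation killing off all genuinely global terms. This calculation is precisely the content of \cite{Selfduality} Thms.~1.1,~1.5 and \cite{Tamroot} Thm.~1.14, on which I would rely directly; the reduction via $\Ttp$ above is then essentially formal.
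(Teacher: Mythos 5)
Your proof ultimately defers to \cite{Selfduality} Thms.\ 1.1 and 1.5 and \cite{Tamroot} Thm.\ 1.14, and this is exactly what the paper does: Theorem \ref{squality} is stated there purely as a recollection of those results and is not re-proved, so your approach coincides with the paper's. One inaccuracy worth flagging in your sketch of the underlying argument: the regulator-constant identity you call $(\star)$ is proved in those references unconditionally, with no appeal to leading $L$-values or Artin formalism for $L$-functions; the match between $\RC_\Theta(\Xp EF)$ and $C(E,\Theta)$ modulo squares comes from Cassels--Tate/Flach pairings, an isogeny-invariance statement for BSD-type quotients modulo squares as in \cite{Squarity}, and explicit local computations, and is entirely algebraic, so the ``$L$-values collapse by Artin formalism'' step should not appear.
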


\noindent
In other words, the quotient of the `fudge factors' over the fields
defined by~$\Theta$ computes the parity of $\tau$ in the Selmer group.

\begin{notation}
Write $B_G$ for the subgroup of $R_G$ generated by the image of
the Burnside ring and all representations of the form $\sigma\oplus\sigma^*$.
\end{notation}

\begin{theorem}
\label{tamrootBG}
Let $\F/\K$ be a Galois extension of local fields of characteristic~0
with Galois group $G$. Let $\E/\K$ be an elliptic curve,
$p$ a prime and $\Theta$ a $G$-relation.
If $\E$ is semistable or $\K$ has residue characteristic $\ge 5$, then
$$
  w(\E,\tau) = (-1)^{\ord_p C(\E,\Theta)} \qquad\text{for all}\quad \tau\in\Ttp.
$$
In all cases, the formula holds for $\tau\in\Ttp\cap B_G$.
\end{theorem}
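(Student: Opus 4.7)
The first assertion of the theorem, for $\tau \in \Ttp$ with $\E$ semistable or $\K$ of residue characteristic at least $5$, is essentially the content of \cite{Tamroot}. The plan is to extend it, via the global-to-local deformation scheme of \S\ref{skratun} and \S\ref{sisogroot}, to $\tau \in \Ttp \cap B_G$ in the remaining reduction cases. Since $B_G$ is generated by the classes of $\C[G/H]$ and of virtual representations of the form $\sigma\oplus\sigma^*$, writing $\tau = \sum_i n_i \C[G/H_i] + \sum_j m_j(\sigma_j\oplus\sigma_j^*)$ converts the twisted local root number into $w(\E,\tau) = \prod_i w(\E/\F^{H_i})^{n_i}$ (the $\sigma\oplus\sigma^*$ summands each contribute $+1$ since $w(E/K,\sigma^*)=\overline{w(E/K,\sigma)}$), and correspondingly reduces the global $p$-parity conjecture for the twist of any elliptic curve by such a $\tau$ to the ordinary $p$-parity conjecture over each intermediate field $F^{H_i}$.

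First, by Lemma \ref{totreal}, realise $\F/\K$ as the completion at a place $v_0$ of a Galois extension $F/K$ of totally real number fields with $\Gal(F/K) \iso G$. Using Proposition \ref{continuity} together with weak approximation, pick an elliptic curve $E/K$ such that: (i) $E/K_{v_0}$ is close enough to $\E/\K$ that both sides of the target local identity coincide for $E$ at $v_0$ and for $\E/\K$; (ii) $E/K_v$ is semistable at every $v \ne v_0$ of residue characteristic $2$ or $3$; and (iii) $E$ admits at least one prime of multiplicative reduction, so that $j(E)$ is non-integral. The construction parallels that in the proofs of Theorems \ref{kratunpf} and \ref{isogroot23thm}.

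Next, Theorem \ref{squality} supplies the global congruence $\blangle\tau,\Xp EF\brangle \equiv \ord_p C(E,\Theta) \pmod 2$. The $p$-parity conjecture for the twist of $E$ by our $\tau \in B_G$ then follows from ordinary $p$-parity over each $F^{H_i}$: Theorem \ref{2parthm} for $p=2$ and \cite{NekIV}~Thm.~1 for odd $p$ both apply, since the $F^{H_i}$ are totally real and $E$ has non-integral $j$-invariant. Combining gives the global identity $w(E/K,\tau) = (-1)^{\ord_p C(E,\Theta)}$. Both sides factor as products of local contributions over the places of $K$; at every $v \ne v_0$ the elliptic curve $E/K_v$ is either semistable (at $v\mid 2,3$, by~(ii)) or sits above a residue field of characteristic at least $5$, so the first assertion of the theorem already supplies the local identity at such $v$. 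Cancelling these known factors leaves precisely the local identity at $v_0$, proving the theorem for $E/K_{v_0}$ and hence for $\E/\K$ by continuity.

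The main obstacle will be checking that the local identity at $v \ne v_0$ really is an instance of the first assertion applied to the decomposition group $D_v \subset G$: one must verify that $\Res_{D_v}\tau \in \Tau_{\Res_{D_v}\Theta,p} \cap B_{D_v}$ whenever $\tau \in \Ttp \cap B_G$, and that the local factor of $C(E,\Theta)$ at $v$ coincides with the corresponding local quantity for $\Res_{D_v}\Theta$. This compatibility of $\Ttp$ and $B_G$ under restriction is the same kind of bookkeeping done in \cite{Tamroot}; once established the deformation argument closes the loop in the expected way.
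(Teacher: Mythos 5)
Your proposal follows essentially the same route as the paper's proof: realise $\F/\K$ as a completion of a totally real Galois extension $F/K$, choose a nearby $E/K$ with non-integral $j$-invariant and semistable at all $v\ne v_0$ above $2,3$, decompose $\tau$ into permutation representations plus terms $\sigma\oplus\sigma^*$, and deduce the global identity $w(E,\tau)=(-1)^{\ord_p C(E,\Theta)}=(-1)^{\blangle\tau,\Xp EF\brangle}$ from inductivity, self-duality of $\Xp EF$, and Theorem \ref{2parthm} over each $F^{H_i}$, then cancel the known local factors at $v\ne v_0$.

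One small correction to your ``main obstacle'' paragraph: you do not need $\Res_{D_v}\tau\in B_{D_v}$ at the places $v\ne v_0$, since there $E/K_v$ is either semistable or has residue characteristic $\ge 5$, so the first (unrestricted) assertion applies; only the condition $\Res_{D_v}\tau\in\Tau_{\Res_{D_v}\Theta,p}$ is relevant, and the paper sidesteps even that bookkeeping by citing \cite{Tamroot}~Cor.~3.4 directly, which packages the local identities $w(E/K_v,\Res_{G_v}\tau)=(-1)^{\ord_p C_{w|v}(E,\Theta)}$ for all $v\ne v_0$ in one step. With that clarification the argument is complete and matches the paper's.
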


\begin{proof}
The first claim is a special case of \cite{Tamroot} Cor. 3.3.
Suppose therefore
that $\tau\in\Ttp\cap B_G$.

Let $F/K$ be a Galois extension of totally real fields
and~$v_0$ a prime of~$K$, such that $K_{v_0}=\K$ and $\Gal(F/K)=\Gal(\F/\K)$
(Lemma \ref{totreal}).
Let $E/K$ be an elliptic curve with non-integral $j$-invariant,
which is semistable at all primes $v\ne v_0$
above 2 and 3, and $E$ is sufficiently close to $\E$. By sufficiently close
we mean that $E/K_{v_0}$ and $\E/\K$ have the same local root numbers,
valuations of minimal discriminants and local Tamagawa numbers
in all intermediate fields of $\F/\K$
(possible by Proposition~\ref{continuity}).

By \cite{Tamroot} Cor. 3.4,%for all $v\ne v_0$.
$$
  w(E/K_v,\Res_{G_v}\tau) = (-1)^{\ord_p C_{w|v}(E,\Theta)} \qquad \text{for all }v\ne v_0.
$$
Taking the product over all places $v$ we get that
the asserted formula at $v_0$ (and hence the claim for $\E/\K$) is equivalent to the global formula
$$
  w(E,\tau) \conjeq (-1)^{\ord_p C(E,\Theta)} \>\citeeqref{squality}\> (-1)^{\blangle\tau,\Xp EF\brangle}.
$$
But this is true, because
$\tau\iso(\sigma\oplus\sigma^*)\oplus\bigoplus_i\C[G/H_i]^{\oplus n_i}\in B_G$, and
$$
  w(E,\sigma\oplus\sigma^*)=1=(-1)^{\blangle\sigma\oplus\sigma^*,\Xp EF\brangle}
$$
by self-duality (\cite{Tamroot} Prop. A.2(1)), and
$$
  w(E,\C[G/H_i]) = w(E/F^{H_i}) = (-1)^{\rksel E{F^{H_i}}p} = (-1)^{\blangle\C[G/H_i],\Xp EF\brangle}
$$
by inductivity (\cite{Tamroot} Prop. A.2(2)) and the $p$-parity conjecture for
$E$ over the totally real field $F^{H_i}$ (Theorem \ref{2parthm}).
\end{proof}

\begin{corollary}
\label{gltamroot}
Let $F/K$ be a Galois extension of number fields with Galois group $G$.
Let $E/K$ be an elliptic curve, $p$ a prime and $\Theta$ a $G$-relation. Then
$$
  w(E,\tau) = (-1)^{\ord_p C(E,\Theta)} = (-1)^{\blangle\tau,\Xp EF\brangle}
$$
for all $\tau\in\Ttp$ such that $\Res_{G_v}\tau\in B_{G_v}$
for every prime $v|6$ where $E$ has additive reduction%
\footnote{In particular, this always applies to $\tau\in\Ttp\cap B_G.$}.
\end{corollary}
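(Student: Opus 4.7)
The plan is to deduce the corollary by applying Theorem~\ref{tamrootBG} one place at a time, since the Selmer-side identity is already known.

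The rightmost equality $(-1)^{\ord_p C(E,\Theta)} = (-1)^{\blangle\tau,\Xp EF\brangle}$ is precisely Theorem~\ref{squality}, which applies because $\tau\in\Ttp$. For the first equality $w(E,\tau) = (-1)^{\ord_p C(E,\Theta)}$, I factor both sides over the places of $K$:
$$
w(E,\tau) = \prod_v w(E/K_v,\Res_{G_v}\tau), \qquad
(-1)^{\ord_p C(E,\Theta)} = \prod_v (-1)^{\ord_p C_{w|v}(E,\Theta)},
$$
and try to match factors at each $v$. Frobenius reciprocity and the functoriality of regulator constants under induction (cf.~\cite{Tamroot}~\S2) give $\Res_{G_v}\tau\in\Trtp$ for every $v$, so Theorem~\ref{tamrootBG} is available at each place applied to the decomposition subextension of $F/K$ at $v$.

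At a place $v$ where either $v\nmid 6$ (so the residue characteristic is $\ge 5$), or $E/K_v$ is semistable, or $v$ is archimedean, the first clause of Theorem~\ref{tamrootBG} yields the local equality. At the remaining places---primes $v\mid 6$ where $E$ has additive reduction---the hypothesis $\Res_{G_v}\tau\in B_{G_v}$ is exactly what is needed to apply the second clause of Theorem~\ref{tamrootBG}. Multiplying the local identities over all $v$ produces the desired first equality. The footnote remark is immediate: restriction carries $B_G$ into $B_{G_v}$, so any $\tau\in\Ttp\cap B_G$ satisfies the additive-reduction hypothesis at every $v\mid 6$ automatically.

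The main obstacle has already been absorbed into Theorem~\ref{tamrootBG}, whose proof handles the difficult additive cases at $v\mid 6$ by the global-to-local continuity trick combined with 2-parity over totally real fields (Theorem~\ref{2parthm}). What remains here is only the bookkeeping described above: checking the standard restriction compatibility $\Res_{G_v}\Ttp\subseteq\Trtp$, and verifying that the two clauses of Theorem~\ref{tamrootBG} together exhaust the places of $K$ under the stated hypothesis.
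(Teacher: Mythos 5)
Your proposal is correct and follows essentially the same route as the paper: the Selmer-side equality is quoted directly from Theorem~\ref{squality}, and the root-number equality is obtained by decomposing both sides over places of $K$, invoking the restriction compatibility $\Res_{G_v}\Ttp\subseteq\Trtp$ (as in \cite{Tamroot} Cor.~3.4), and then applying Theorem~\ref{tamrootBG} place by place, using its first clause off $v\mid 6$ or at semistable places and its second clause exactly where the hypothesis $\Res_{G_v}\tau\in B_{G_v}$ is assumed. The only cosmetic slip is lumping archimedean $v$ under the ``first clause,'' whose stated hypothesis (semistable or residue characteristic $\ge 5$) doesn't literally mention archimedean places, but those contributions are covered by \cite{Tamroot} Cor.~3.3 in any case.
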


\begin{proof}
The first equality follows by the same argument as
the proof of Cor.~3.4 of \cite{Tamroot}.
(Essentially take the product over all places, and check that everything
behaves well under the passage to decomposition groups.)
The second equality is Theorem \ref{squality}.
\end{proof}

\begin{remark}
We expect Theorem \ref{tamrootBG} and
Corollary \ref{gltamroot} to hold for all $\tau\in\Ttp$, irrespectively
of the reduction type of $E$.
\end{remark}

\subsection{Examples and applications}
\label{sstamappl}

\begin{example}
The group $G=\Sym_3$ has four subgroups up to conjugacy,
$H=\{1\}, \Cy_2, \Cy_3, G$ and three irreducible representations:
$\triv$ (trivial), $\epsilon$ (sign) and $\rho$ (2-dimensional).
There is a $G$-relation
$$
  \Theta = \{1\} - 2\>\Cy_2 - \Cy_3 + 2\>G,
$$
unique up to multiples. One easily checks that
$\RC_\Theta^{\Q_p}(\triv) \!=\! \RC_\Theta^{\Q_p}(\epsilon) \!=\! \RC_\Theta^{\Q_p}(\rho) \!=\! 3,$
independently of the prime $p$. Thus,
$$
  \triv\oplus\epsilon\oplus\rho \in \Tau_{\Theta,3}.
$$
This representation lies in the Burnside ring of $G$,
$$
  \C[G/\Cy_2] \oplus \C[G/\Cy_3] \ominus \C[G/G] =
  (\triv\oplus\rho) \oplus (\triv\oplus\epsilon) \ominus \triv =
  \triv\oplus\epsilon\oplus\rho.
$$
By Corollary \ref{gltamroot}, we find that in an $\Sym_3$-extension of number
fields $F/K$ and an elliptic curve $E/K$,
$$
  w(E,\triv\oplus\epsilon\oplus\rho) = (-1)^{\ord_3 C(E,\Theta)} = (-1)^{\blangle\triv\oplus\epsilon\oplus\rho,\Xp EF\brangle}.
$$
Noting that
$$
  w(E,\triv\oplus\epsilon\oplus\rho) = w(E/K)w(E/F^{\Cy_3})w(E/F^{\Cy_2})
$$
and similarly for the Selmer rank by inductivity
(\cite{Tamroot} Prop. A.2(2)), we recover Theorem \ref{thms3glo}.
In the same way,
Theorem \ref{tamrootBG} recovers Theorem \ref{thms3loc}.
\end{example}

\begin{example}
Let $G=\Di_{2p}$ be the dihedral group of order $2p$
with \hbox{$p\equiv 3\mod 4$}. As for $\Sym_3$, there is a $G$-relation
$$
  \Theta = \{1\} - 2\>\Cy_2 - \Cy_p + 2\>\Di_{2p}.
$$
The irreducible $\C G$-representations are $\triv$ (trivial),
$\epsilon$ (sign) and $\sigma_1,...,\sigma_{\frac{p-1}2}$ (2-dimensional).
The irreducible $\Q_p G$-representations are
$\triv, \epsilon$ and \hbox{$\rho=\bigoplus_i\sigma_i$,} and their regulator
constants are (cf. \cite{Tamroot} Ex. 2.20)
$$
  \RC_\Theta^{\Q_p}(\triv) = \RC_\Theta^{\Q_p}(\epsilon) = \RC_\Theta^{\Q_p}(\rho) = p.
$$
Therefore $\tau=\triv\oplus\epsilon\oplus\rho\in\Ttp$ (as $\frac{p-1}2$ is odd);
it is also in the Burnside ring of $G$,
$$
  \C[G/\Cy_2] \oplus \C[G/\Cy_p] \ominus \C[G/G] =
  (\triv\oplus\rho) \oplus (\triv\oplus\epsilon) \ominus \triv =
  \tau.
$$
Now we apply Corollary \ref{gltamroot}:
\end{example}

\begin{proposition}
\label{propdih}
Suppose $F/K$ is a Galois extension of
number fields with Galois group $G\iso\Di_{2p}$ for a prime $p\equiv 3\mod 4$.
Then for every elliptic curve $E/K$ and every
2-dimensional irreducible $\C G$-representation~$\sigma$,
$$
  w(E,\sigma\oplus\triv\oplus\det\sigma) =
  (-1)^{\blangle \sigma\oplus\triv\oplus\det\sigma,\Xp EF\brangle}.
$$
\end{proposition}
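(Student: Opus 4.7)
The plan is to apply Corollary \ref{gltamroot} to the representation $\tau = \triv \oplus \epsilon \oplus \rho \in \Ttp \cap B_G$ isolated in the preceding example, where $\rho = \sigma_1 \oplus \cdots \oplus \sigma_{(p-1)/2}$ is the sum of all $2$-dimensional irreducible $\C G$-representations. Since $\tau$ lies in the Burnside-ring subgroup $B_G$, its restriction to any decomposition subgroup automatically lies in $B_{G_v}$, so the hypothesis of the corollary is satisfied unconditionally and we get
\[
w(E,\tau) \,=\, (-1)^{\blangle\tau,\Xp EF\brangle}.
\]
It therefore suffices to show that neither $w(E,\tau)$ nor the parity of $\blangle\tau,\Xp EF\brangle$ changes when $\rho$ is replaced by any single $\sigma$.

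For this I would first recall the elementary character-theoretic fact that every $2$-dimensional irreducible $\C\Di_{2p}$-representation has determinant equal to the sign character $\epsilon$, so $\sigma\oplus\triv\oplus\det\sigma$ differs from $\tau$ only in having $\sigma$ in place of $\rho$. Next, the $\sigma_i$ form a single orbit under $\Gal(\Q(\zeta_p+\zeta_p^{-1})/\Q)$, and each $\sigma_i$ is self-dual and orthogonal (Frobenius--Schur indicator $+1$), so each root number $w(E,\sigma_i)$ lies in $\{\pm 1\}$; by Galois-equivariance of root numbers these signs are all equal, to a common value~$w$. Likewise, because $\Xp EF$ is $\Q_p$-rational, Galois-conjugate characters have the same multiplicity in it, so $\blangle\sigma_i,\Xp EF\brangle$ is an integer $m$ independent of $i$.

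The arithmetic input that makes everything work is the hypothesis $p\equiv 3 \mod 4$, which makes $(p-1)/2$ odd. Consequently
\[
w(E,\rho) \,=\, w^{(p-1)/2} \,=\, w \,=\, w(E,\sigma), \qquad \blangle\rho,\Xp EF\brangle \,=\, \tfrac{p-1}{2}\,m \,\equiv\, m \,=\, \blangle\sigma,\Xp EF\brangle \mod 2,
\]
and combining with the unchanged $\triv$ and $\epsilon$ contributions yields both desired identifications, which is the claim. The only point that warrants genuine care is the orthogonality of the $\sigma_i$, needed so that the root numbers $w(E,\sigma_i)$ are rational and Galois-equivariance collapses them to a single sign; after that, everything is a routine application of Corollary \ref{gltamroot}.
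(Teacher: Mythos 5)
Your proof is correct and follows the same route as the paper's: apply Corollary \ref{gltamroot} to $\tau = \triv\oplus\epsilon\oplus\rho \in \Ttp\cap B_G$, then pass from $\rho$ to a single constituent $\sigma$ by using $\Q_p$-rationality of $\Xp EF$ and Galois-equivariance of root numbers, with $p\equiv 3\bmod 4$ making $(p-1)/2$ odd. Your additional remarks (that $\tau\in B_G$ forces $\Res_{G_v}\tau\in B_{G_v}$, and that orthogonality of the $\sigma_i$ guarantees $w(E,\sigma_i)=\pm1$) are correct and merely make explicit two points the paper leaves implicit (the first via the footnote to Corollary \ref{gltamroot}, the second via self-duality).
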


\begin{proof}
By Corollary \ref{gltamroot} and the example above,
$$
  w(E,\triv\oplus\epsilon\oplus \rho) =
  (-1)^{\ord_p \frac{\CC EF\CC EK^2}{(\CC E{F^{\hbox{\tiny$\scriptscriptstyle\Cy_p$}}})
    (\CC E{F^{\hbox{\tiny$\scriptscriptstyle\Cy_2$}}})^2}} =
  (-1)^{\blangle \triv\oplus\epsilon\oplus\rho,\Xp EF\brangle}
$$
with $\epsilon$ and $\rho$ as in the example.
Now $\sigma$ is one of the constituents
of $\rho$, and $\det\sigma=\epsilon$. Because
$\Xp EF$ is a $\Q_p$-rational representation,
$$
  \blangle \rho,\Xp EF\brangle
   = \tfrac{p-1}2 \blangle \sigma,\Xp EF\brangle.
$$
On the other hand, by equivariance of local (and therefore global)
root numbers (\cite{RohI} Thms. 1, 2),
$$
  w(E,\rho)=w(E,\sigma)^{(p-1)/2}=w(E,\sigma),
$$
and the result follows.
\end{proof}

The proposition confirms \cite{Tamroot} Hypothesis 4.1
for elliptic curves when \linebreak $p\equiv 3\mod 4$. Thus, % 4.3, 4.5, 4.8?
\cite{Tamroot} shows:

\begin{theorem}
Let $K$ be a totally real number field, $p\equiv 3\mod 4$ a prime, and
$E/K$ an elliptic curve.
Assume either that $K=\Q$ or that $E$ has non-integral $j$-invariant.
Suppose $F$ is a $p$-extension of an abelian extension of~$K$,
Galois over $K$. Then
$$
  (-1)^{\blangle\tau,\Xp EF\brangle} = w(E,\tau)
$$
for every orthogonal representation $\tau$ of $\Gal(F/K)$; in other words,
the \hbox{$p$-parity} conjecture holds for the twist of $E$ by $\tau$.
\end{theorem}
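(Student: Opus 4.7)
The plan is to deduce the theorem by invoking the $p$-group induction machinery of \cite{Tamroot}. That machinery establishes the $p$-parity conjecture for all orthogonal twists of $E$ in a $p$-extension of an abelian extension of $K$ (Galois over $K$), provided two inputs are in place: (i) Hypothesis 4.1 of \cite{Tamroot} for $E$, which expresses the parity of certain dihedral twists via the corresponding root numbers; and (ii) the $p$-parity conjecture for $E$ over every totally real subfield appearing in the induction.

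For input (i), Proposition \ref{propdih} just verified the required dihedral parity formula
$$
  w(E,\sigma\oplus\triv\oplus\det\sigma) = (-1)^{\blangle\sigma\oplus\triv\oplus\det\sigma,\Xp EF\brangle}
$$
for every $2$-dimensional irreducible representation $\sigma$ of $\Di_{2p}$, which is exactly Hypothesis 4.1 of \cite{Tamroot} in the case $p\equiv 3\pmod 4$. So there is nothing further to check on this side.

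For input (ii), I would split into two cases according to the hypothesis of the theorem. When $E/K$ has non-integral $j$-invariant, this property is preserved in every extension, so Theorem \ref{2parthm} (for $p=2$) together with \cite{NekIV} Thm.~1 (for odd $p$) supplies the $p$-parity conjecture for $E$ over every totally real subfield encountered in the tower. When instead $K=\Q$, one invokes \cite{Squarity} Thm.~1.4, supplemented by Corollary \ref{corkratun} to cover $p=2$ parity over quadratic extensions, in the same role. Feeding inputs (i) and (ii) into the machinery of \cite{Tamroot} then yields the claim.

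The only substantive obstacle was the verification of Hypothesis 4.1 when $p\equiv 3\pmod 4$, which was exactly the content of Proposition \ref{propdih}; the remainder is a purely formal consequence of the framework set up in \cite{Tamroot}, with no additional calculation required here.
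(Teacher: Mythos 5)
Your proposal matches the paper's proof exactly in strategy: combine \cite{Tamroot} Thm.~4.5 (the $p$-group induction machinery) with Proposition~\ref{propdih} (which verifies Hypothesis~4.1 of \cite{Tamroot} for $p\equiv 3\bmod 4$) and the known $p$-parity theorems over $\Q$ (\cite{Squarity} Thm.~1.4) and over totally real fields with non-integral $j$-invariant (\cite{NekIV} Thm.~1). One small superfluity: since $p\equiv 3\bmod 4$ forces $p$ odd, your appeals to Theorem~\ref{2parthm} ``for $p=2$'' and to Corollary~\ref{corkratun} for ``$p=2$ parity over quadratic extensions'' play no role here and can be omitted; only the odd-$p$ inputs (\cite{NekIV} and \cite{Squarity}) are actually consumed by the machinery.
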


\begin{proof}
Combine \cite{Tamroot} Thm. 4.5 with the above proposition and
the $p$-parity theorems over $\Q$ and over totally real fields
(\cite{Squarity} Thm. 1.4 and \cite{NekIV} Thm.~1).
\end{proof}

\begin{theorem}
Let $K$ be a number field, $p\equiv 3\mod 4$ a prime, and
$E/K$ an elliptic curve.
Suppose $F$ is a $p$-extension of a Galois extension $M/K$, Galois over $K$.
If the $p$-parity conjecture
$$
  (-1)^{\rksel ELp} = w(E/L)
$$
holds for subfields $K\subset L\subset M$, then it holds
for subfields $K\subset L\subset F$.
\end{theorem}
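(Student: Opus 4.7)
The plan is to mimic the preceding theorem's reduction to \cite{Tamroot} Thm.~4.5, with two substitutions. First, Proposition~\ref{propdih} verifies \cite{Tamroot} Hypothesis~4.1 for elliptic curves at primes $p\equiv3\pmod{4}$, so the Brauer-induction machinery of \cite{Tamroot}~\S4 is available. Second, instead of invoking \cite{NekIV}~Thm.~1 or \cite{Squarity}~Thm.~1.4 to discharge parity on the base (as in the previous theorem, where the base was totally real or $\Q$), we use the hypothesis that the $p$-parity conjecture is known on every $L$ with $K\subset L\subset M$.

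Set $G=\Gal(F/K)$ and $N=\Gal(F/M)$, so that $N$ is a normal $p$-subgroup of $G$ with quotient $\Gal(M/K)$. For $L$ with $K\subset L\subset F$ and $H=\Gal(F/L)$, the $p$-parity conjecture for $E/L$ is the identity $(-1)^{\blangle\C[G/H],\Xp EF\brangle}=w(E,\C[G/H])$. The Brauer-type decomposition of \cite{Tamroot}~\S4 will write the orthogonal character $\C[G/H]$ (modulo virtual representations on which the identity holds automatically, e.g.\ elements of $B_G$) as a $\Z$-linear combination of two kinds of pieces: characters inflated from $G/N=\Gal(M/K)$, and characters arising from $\Di_{2p}$-relations inside subgroups of $G$ built from the normal $p$-group $N$ together with an involution. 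For the inflated pieces, both sides of the target identity factor through $M$ by inductivity of root numbers (\cite{Tamroot} Prop.~A.2(2)) and Frobenius reciprocity for Selmer multiplicities, and thus collapse to the hypothesis on subfields of $M$. For the $\Di_{2p}$-pieces, Proposition~\ref{propdih} supplies the required equality directly.

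The main obstacle is that \cite{Tamroot}~Thm.~4.5 is stated for $p$-extensions of \emph{abelian} extensions of $K$, whereas here $M/K$ is only required to be Galois. The hypothesis on subfields of $M$ is designed to compensate precisely for the loss of abelianness: in the abelian case, parity over $M$ was accessible externally (via totally real theorems and Rohrlich's result for abelian twists), whereas here we simply assume it as input. One must therefore verify that the Brauer-induction argument in \cite{Tamroot}~\S4 does not use abelianness of $G/N$ in any essential way---only the normal $p$-subgroup structure of $N$ inside $G$ and the dihedral-relation input of Proposition~\ref{propdih}. I expect this step to be bookkeeping rather than a new idea: once the decomposition of $\C[G/H]$ is set up, each piece either factors through $\Gal(M/K)$ or is controlled by a $\Di_{2p}$-relation, and both cases are discharged by the two substitutions above.
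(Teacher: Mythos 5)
Your approach is essentially the paper's: verify Hypothesis~4.1 of \cite{Tamroot} via Proposition~\ref{propdih} and then invoke the Brauer-induction machinery of \cite{Tamroot}~\S4 to bootstrap parity from subfields of $M$ to subfields of $F$. The only divergence is that you cite \cite{Tamroot}~Thm.~4.5 (the version with abelian base) and then spend a paragraph arguing that its proof doesn't really use abelianness; the paper instead cites \cite{Tamroot}~Thm.~4.3, which is exactly the general inductive statement you are trying to reconstruct (Galois $M/K$, parity known over subfields of $M$, Hypothesis~4.1 in force $\Rightarrow$ parity over subfields of $F$), so no such argument is needed. Your ``I expect this step to be bookkeeping'' is thus correct, but the honest fix is to cite Thm.~4.3 rather than re-derive it from the proof of Thm.~4.5.
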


\begin{proof}
Combine \cite{Tamroot} Thm. 4.3 with Proposition \ref{propdih}.
\end{proof}

\begin{theorem}
For every elliptic curve $\Esmslash \Q$ and number field $L\!\subset\!\Q(E[3^\infty])$,
$$
  (-1)^{\rksel EL3} = w(E/L).
$$
\end{theorem}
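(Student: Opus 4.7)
The approach is to reduce the statement to proving 3-parity for $E$ over subfields of $\Q(E[3])$, and then to handle those cases by a combination of isogeny arguments and Brauer-relation twists.

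\textbf{Step 1 (Reduction to subfields of $\Q(E[3])$).} Let $\tilde L$ denote the Galois closure of $L/\Q$; since $\Q(E[3^\infty])/\Q$ is Galois, $\tilde L\subset\Q(E[3^\infty])$. Put $M:=\tilde L\cap\Q(E[3])$, which is Galois over $\Q$ as an intersection of two Galois extensions. Via the canonical isomorphism $\Gal(\tilde L\cdot\Q(E[3])/\Q(E[3]))\iso\Gal(\tilde L/M)$ together with the fact that $\ker(\GL_2(\Z_3)\surjects\GL_2(\FF_3))$ is pro-3, the extension $\tilde L/M$ is a finite 3-extension. Applying the preceding theorem (with $K=\Q$ and $p=3\equiv3\pmod4$) reduces the problem to proving 3-parity for $E/L'$ at every intermediate field $\Q\subset L'\subset M$; since $M\subset\Q(E[3])$, it suffices to treat all subfields $L'$ of $\Q(E[3])$.

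\textbf{Step 2 (3-parity for $L'\subset\Q(E[3])$).} Write $G=\Gal(\Q(E[3])/\Q)\subset\GL_2(\FF_3)$ and $H=\Gal(\Q(E[3])/L')$. If $H$ stabilizes a line in $E[3]$, then $E/L'$ admits an $L'$-rational 3-isogeny and Corollary \ref{isogroot23} yields 3-parity. The case $L'=\Q$ is the known 3-parity conjecture for $E/\Q$. For the remaining $L'$ (where $H$ stabilizes no line yet $L'\neq\Q$, the prototypical example being $H=\SL_2(\FF_3)$, $L'=\Q(\sqrt{-3})$, with $H/\{\pm I\}\iso A_4$ acting transitively on the four lines in $E[3]$), one invokes Corollary \ref{gltamroot} applied to $\Q(E[3])/\Q$ with $\tau=\C[G/H]$: the task is to exhibit a $G$-relation $\Theta$ such that $\C[G/H]\in\Tau_{\Theta,3}$, which one expects to arise from the $S_3$-relation of Proposition \ref{propdih} applied to suitable $S_3$-subquotients of $G$.

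\textbf{Main obstacle.} The hard part is Step 2 when $H$ fails to stabilize any line and $L'\neq\Q$: one must enumerate the conjugacy classes of such $H$ inside $\GL_2(\FF_3)$ (a finite check, since $|\GL_2(\FF_3)|=48$) and, for each, write $\C[G/H]$ as a virtual combination of induced trivials $\Ind_{H_i}^G\triv$ whose regulator constants have matching 3-adic parities. Once this combinatorial step is carried out, Corollary \ref{gltamroot} produces the required twisted 3-parity formula; combining this with the isogeny case and the base case $L'=\Q$ completes Step 2, and then Step 1 delivers the theorem.
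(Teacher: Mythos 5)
Your Step 1 is correct and matches the paper's outline: both reduce via the preceding theorem ([Tamroot] Thm.\ 4.3 together with Prop.~\ref{propdih}) from $\Q(E[3^\infty])$ down to subfields of $\Q(E[3])$, using that $\ker(\GL_2(\Z_3)\to\GL_2(\FF_3))$ is pro-$3$.

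Step 2, however, has a genuine gap that you yourself flag but underestimate. For a subgroup $H\subset\GL_2(\FF_3)$ that stabilises no line and $L'\ne\Q$, you propose to apply Corollary~\ref{gltamroot} with $\tau=\C[G/H]$, which requires exhibiting a $G$-relation $\Theta$ with $\C[G/H]\in\Tau_{\Theta,3}$. This is not a routine ``finite check'': for a given $H$ there is in general no $G$-relation $\Theta$ whose regulator constants realise the required parities of $\blangle\C[G/H],\rho\brangle$, so the claim that ``once this combinatorial step is carried out'' the result follows is unsupported and, in this direct form, probably not true. The paper does not try to place each $\C[G/H]$ into some $\Tau_{\Theta,3}$ at all. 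Instead it cites and relies on an ingredient you omit: $3$-parity over abelian extensions of $\Q$ (\cite{Selfduality} Thm.~1.2). The actual argument ([Tamroot] Ex.~4.8) feeds two base cases into the $p$-extension descent machinery ([Tamroot] Thm.~4.3 plus Prop.~\ref{propdih}): the abelian subfields, handled by \cite{Selfduality}, and the fields over which $E$ acquires a $3$-isogeny, handled by Corollary~\ref{isogroot23}, and then climbs the tower from those. Your version keeps the isogeny base case and the $\Q$ base case but drops the abelian one and replaces the structured descent by an undone search for relations; as written the proof does not close.
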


\begin{proof}
Same argument as in \cite{Tamroot} Ex. 4.8, using
\cite{Tamroot} Thm. 4.3 together with Proposition \ref{propdih},
$3$-parity for elliptic curves with a 3-isogeny
(Corollary~\ref{isogroot23})
and over abelian extensions of $\Q$ (\cite{Selfduality} Thm. 1.2).
\end{proof}

Another application of Corollary \ref{gltamroot} is for general $p$ but
$p$-adic towers where all orthogonal $\C G$-representations are in $B_G$.
As an example, we may take a `false Tate curve extension' over $\Q$:

\begin{example}
Let $E/\Q$ be an elliptic curve,
$p$ an odd prime, $m$ an integer which is not a perfect $p$th power,
and $F=\Q(\mu_{p^n},\sqrt[p^n]m)$ for some $n\ge 1$.
The complex irreducible representations of $G=\Gal(F/\Q)$ are
of the form $\chi\tensor\rho_j$ for a one-dimensional character $\chi$
of $\Gal(\Q(\mu_{p^n}/\Q))$ and
$$
  \rho_0 = \triv, \qquad
  \rho_j = \C[G/H_j] \ominus \C[G/H_{j-1}] \qquad (j=1,...,n),
$$
where $H_j=\Gal(F/\Q(\sqrt[p^j]m))$ is a subgroup of index $p^j$.
The self-dual ones are the $\rho_j$ and the non-trivial
character $\epsilon$ of $\Gal(\Q(\mu_p)/\Q)$ of order 2, so
all orthogonal representations of $G$ lie in $B_G$.
Thus the $p$-parity conjecture holds for twists of $E$
by all $\tau\in\Ttp$ for every $G$-relation $\Theta$.
Writing $U_j=\Gal(F/\Q(\mu_p,\sqrt[p^j]m))$,
one checks that
$$
  \triv\oplus\epsilon\oplus\rho_j \>\>\in\>\>
  \Tau_{U_j-U_{j-1}-(p\!-\!1)H_j+(p\!-\!1)H_{j-1}\,,\,p}.
$$
In particular, every orthogonal representation of $G$ is a sum of trivial,
sign representations of $G$ and those in $\Ttp$ for some $\Theta$.
Because the $p$-parity conjecture holds for $E/\Q$ and its quadratic twist
by $\epsilon$, it follows that
$$
  (-1)^{\blangle\tau,\Xp EF\brangle} = w(E,\tau)
$$
for every orthogonal representation $\tau$ of $G$.
This example removes the semistability restriction
from \cite{Squarity} Prop 4.13. See also \cite{CFKS} \S4 for related
results.
\end{example}

\section{Parity over fields and the global root number formula}
\label{sglobal}

Now we prove the two main results over number fields,
the parity conjecture assuming finiteness of $\sha$ and
the formula for the global root number.
The only two ingredients are the 2-isogeny theorem from
\S\ref{sisogroot} and the $\Sym_3$-example from \S\ref{stamroot}.
The argument is based on that of \cite{Squarity} Thm. 3.6.

\begin{theorem}
\label{main}
Let $E$ be an elliptic curve over a number field $K$, and
suppose that
$\sha(E/K(E[2]))$ has finite 2- and 3-primary parts. Then
$$
  (-1)^{\rk E/K} = w(E/K).
$$
\end{theorem}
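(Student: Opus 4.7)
The plan is to stratify by $d=[F\!:\!K]$, where $F=K(E[2])$. Since $\Gal(F/K)$ embeds into $\GL_2(\FF_2)\iso\Sym_3$, we have $d\in\{1,2,3,6\}$. The first step is to observe that the hypothesis propagates: for any intermediate $K\subset k\subset F$, the restriction map $\sha(E/k)\to\sha(E/F)$ has finite kernel (killed by $[F\!:\!k]$) and image inside $\sha(E/F)$, so the $2$- and $3$-primary parts of $\sha(E/k)$ are finite and hence $\rksel Ek2=\rksel Ek3=\rk E/k$. Thus at every stage it is enough to verify $p$-parity for a conveniently chosen prime.

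When $d\le 2$, the group $\Gal(F/K)$ fixes a non-trivial point of $E[2]$, so $E$ admits a $K$-rational $2$-isogeny and Corollary~\ref{isogroot23} gives $(-1)^{\rksel EK2}=w(E/K)$ directly.

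When $d=3$, I would first apply Corollary~\ref{isogroot23} to $E/F$ (which has full rational $2$-torsion) to obtain $(-1)^{\rk E/F}=w(E/F)$, and then descend to $K$ by two complementary observations. The non-trivial irreducible rational representation $\rho$ of $C_3$ is two-dimensional, so $E(F)\tensor\Q\iso(E(K)\tensor\Q)\oplus\rho^{\oplus m}$ as $\Gal(F/K)$-modules, whence $\rk E/F\equiv\rk E/K\pmod 2$. By inductivity of root numbers, $w(E/F)=w(E/K)\,w(E/K,\chi)\,w(E/K,\bar\chi)$, where $\chi,\bar\chi$ are the non-trivial characters of $C_3$; since $\chi$ is not self-dual, $w(E/K,\chi)\,w(E/K,\bar\chi)=|w(E/K,\chi)|^2=1$, so $w(E/F)=w(E/K)$. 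Combining these two identities yields the parity conjecture for $E/K$.

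When $d=6$, so $\Gal(F/K)\iso\Sym_3$, the plan is to apply Theorem~\ref{thms3glo} to $F/K$ with the intermediate fields $M=K(\sqrt{\Delta_E})$ (quadratic, the fixed field of $A_3$) and $L=K(P)$ (cubic, for any non-trivial $P\in E[2]$). This produces
$$
  w(E/K)\,w(E/M)\,w(E/L)=(-1)^{\rk E/K+\rk E/M+\rk E/L}.
$$
Since $P\in E(L)[2]$, Corollary~\ref{isogroot23} yields $(-1)^{\rk E/L}=w(E/L)$; and since $\Gal(F/M)=A_3$ acts transitively on $E[2]\!\setminus\!\{O\}$, the curve $E/M$ sits in the already-treated $d=3$ case, giving $(-1)^{\rk E/M}=w(E/M)$. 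Cancelling these factors leaves precisely $(-1)^{\rk E/K}=w(E/K)$. The only substantive step will be the $d=3$ descent, which demands both the rank comparison via the $\Q[C_3]$-module structure of $E(F)\tensor\Q$ and the cancellation of twisted root numbers for the non-self-dual cubic characters; everything else is a routine case analysis in the spirit of \cite{Squarity} Thm.~3.6.
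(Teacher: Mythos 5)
Your proposal matches the paper's proof essentially line for line: the same case split on $d=[K(E[2]):K]$, the same appeal to Corollary~\ref{isogroot23} for $d\le 2$, the same odd-degree descent for $d=3$, and the same invocation of Theorem~\ref{thms3glo} to reduce $d=6$ to the earlier cases. The only difference is cosmetic—you spell out the $\sha$-propagation and the $C_3$-descent (via the $\Q[C_3]$-module structure of $E(F)\otimes\Q$ and the pairing of $\chi,\bar\chi$) where the paper simply cites \cite{Squarity} Rmk.~2.10 and \cite{Tamroot} Prop.~A.2(3).
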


\begin{proof}
Write $F=K(E[2])$, and note that $\Gal(F/K)\subset\GL_2(\FF_2)\iso\Sym_3$.
By assumption,
the 2- and 3-primary parts of $\sha(E/k)$ are finite
for $K\!\subset\! k\!\subset\! F$, see e.g. \cite{Squarity} Rmk. 2.10.

If $E$ has a $K$-rational 2-torsion point, the result follows from
Corollary~\ref{isogroot23}. If $F/K$ is cubic, then
the formula follows from that over $F$, as
both the parity of the rank and the root number
are unchanged in odd degree Galois extensions
(see e.g. \cite{Tamroot} Prop. A.2(3)).

The remaining case is $\Gal(F/K)\iso \Sym_3$.
Let $M$ be the quadratic extension of $K$ in $F$,
and let $L$ be one of the cubic ones. By the above argument,
\beq
  (-1)^{\rk E/M} = w(E/M)\qquad\text{and}\qquad
  (-1)^{\rk E/L} = w(E/L).
\eeq
On the other hand, by Theorem \ref{thms3glo},
$$
  (-1)^{\rk E/K+\rk E/M+\rk E/L} = w(E/K)w(E/M)w(E/L).
$$
\end{proof}

\noindent
The construction gives an explicit formula for the
global root number:

\begin{theorem}
\label{gloroot}
Let $E$ be an elliptic curve over a number field $K$.
Write $F=K(E[2])$ and $G=\Gal(F/K)$. Choose a non-trivial
2-torsion point $P$ of $E$, defined over $K$ if $G=\Cy_2$.
Write $E'=E/\{O,P\}$ for the 2-isogenous curve defined over $L=K(P)$.
Then
\beq
  w(E/K) = \left\{
  \begin{array}{ll}
     (-1)^{\ord_2\frac{\CC EL\CC EF}{\CC{E'}L\CC{E'}F}
     +\ord_3\frac{\CC EF\CC EK^2}{\CC E{K(\sqrt{\Delta_E})}\CC EL^2}}&\>\>\>G=\Sym_3,\cr
     (-1)^{\ord_2\frac{\CC EL}{\CC{E'}L}}&\>\>\>G\ne \Sym_3.\cr
  \end{array}
  \right.
\eeq
\end{theorem}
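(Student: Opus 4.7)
The strategy is to split into cases according to $G=\Gal(F/K)\subset\GL_2(\FF_2)\iso\Sym_3$, so $G\in\{1,\Cy_2,\Cy_3,\Sym_3\}$ and $d=[F:K]\in\{1,2,3,6\}$. In all cases we will reduce the claimed formula to a product of known parity results, namely the 2-isogeny formula (Corollary \ref{isogroot23}) and the $\Sym_3$-formula (Theorem \ref{thms3glo}), together with the elementary fact that the global root number is invariant under odd-degree Galois base change (\cite{Tamroot} Prop. A.2(3)). Throughout I write $M$ for the unique quadratic subfield of $F/K$ when $d=6$; the standard identification of the sign character of $\Gal(F/K)$ with the quadratic character cut out by $\sqrt{\Delta_E}$ gives $M=K(\sqrt{\Delta_E})$.

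For $d\!\le\!2$, the chosen point $P$ is $K$-rational by hypothesis, so $L\!=\!K$ and $E'$ is defined over $K$. Corollary \ref{isogroot23} applied to the $K$-rational 2-isogeny $E\to E'$ then yields immediately
\[
 w(E/K) = (-1)^{\ord_2\CC EK/\CC{E'}K} = (-1)^{\ord_2 \CC EL/\CC{E'}L}.
\]
For $d\!=\!3$, any non-trivial 2-torsion point generates $F$, so $L\!=\!F$; now $E/F$ has a rational 2-isogeny to $E'/F$, and Corollary \ref{isogroot23} gives $w(E/F)=(-1)^{\ord_2\CC EF/\CC{E'}F}$. Since $F/K$ is cyclic cubic, $w(E/K)=w(E/F)$ by odd-degree invariance, which finishes this case.

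For $d\!=\!6$, I factor the exponent in the theorem as the sum of its $\ord_2$ and $\ord_3$ pieces and deal with them separately. The $\ord_3$ piece is exactly the expression appearing in Theorem \ref{thms3glo} (using $M=K(\sqrt{\Delta_E})$ and any cubic subfield $L=K(P)$), so
\[
 (-1)^{\ord_3\frac{\CC EF\CC EK^2}{\CC EM\CC EL^2}} = w(E/K)\,w(E/M)\,w(E/L).
\]
For the $\ord_2$ piece, note that $P$ is rational over both $L$ and $F$, so Corollary~\ref{isogroot23} applied to the 2-isogeny $E\to E'$ over each of these fields gives
\[
 (-1)^{\ord_2\frac{\CC EL\CC EF}{\CC{E'}L\CC{E'}F}} = w(E/L)\,w(E/F).
\]
Multiplying the two expressions, the $w(E/L)^2$ cancels and we are left with $w(E/K)\,w(E/M)\,w(E/F)$. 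Finally, $F/M$ is a cyclic cubic Galois extension, so odd-degree invariance forces $w(E/F)=w(E/M)$ and hence $w(E/M)w(E/F)=1$, leaving $w(E/K)$ as required.

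The genuine work has already been done in Corollary \ref{isogroot23} and Theorem \ref{thms3glo}; the step most worth care is the identification $M=K(\sqrt{\Delta_E})$ in the $\Sym_3$-case, which is what makes the $\ord_3$ expression match the regulator constant of the unique (up to scaling) $\Sym_3$-relation, and the bookkeeping that shows the leftover $w(E/M)w(E/F)$ cancels by odd-degree invariance. Once this is in place, every case reduces to a direct quotation of one of the two cited results.
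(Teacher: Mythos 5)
Your proposal is correct and takes essentially the same approach as the paper: handle $d\le 2$ via Corollary \ref{isogroot23} over $K$, handle $d=3$ via the isogeny formula over $F=L$ plus odd-degree invariance, and for $d=6$ combine Corollary \ref{isogroot23} over $L$ and $F$ with Theorem \ref{thms3glo} and then cancel $w(E/M)w(E/F)=1$ using $F/M$ cyclic cubic. The only (cosmetic) difference is that the paper substitutes $w(E/M)=w(E/F)$ directly when citing Theorem \ref{thms3glo} rather than cancelling at the end.
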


\begin{proof}
When $G=\Cy_1$ or $\Cy_2$, apply the 2-isogeny theorem
(Corollary \ref{isogroot23}).
If $G=\Cy_3$, then $w(E/K)=w(E/L)$ (\cite{Tamroot} Prop. A.2(3)),
and the result follows from that for~$E/L$.
Finally, suppose $G=\Sym_3$. Noting that $K(\sqrt{\Delta_E})$ is
the quadratic extension of $K$ in $F$,
\beq
  (-1)^{\ord_2\frac{\CC EL\CC EF}{\CC{E'}L\CC{E'}F}} &=& w(E/L)w(E/F),
     \cr
  (-1)^{\ord_3\frac{\CC EF\CC EK^2}{\CC E{K(\sqrt{\Delta_E})}\CC EL^2}}
    &=& w(E/L)w(E/F)w(E/K),
\eeq
by the 2-isogeny theorem over $L$ and $F$ and
by Theorem \ref{thms3glo}, respectively.
\end{proof}

\begin{remark}
It is already implicit in \cite{Squarity} that, assuming finiteness of $\sha$,
$(-1)^{\rk E/K}$ is given by the formula in the right-hand side
of Theorem \ref{gloroot}. It only relies on Cassels' formula \ref{cassels}
and Theorem \ref{s3rksel} and bypasses all comparisons with root numbers.
\end{remark}

\endcomment

\section{The local root number formula}
\label{slocal}

{}From the proof of the global root number theorem,
with a bit of extra work, one can also extract
a formula for the local root numbers (Theorem \ref{locmorloc} below).
The idea is to write the right-hand
side of Theorem \ref{gloroot} as a product of local terms and compare them
to the local root numbers. This is slightly delicate, as these local terms,
denoted by $\m(E/\K)$ below, must not depend on the global extension
$K(E[2])/K$. Also, to get a manageable relation between $w(E/\K)$ and
$\m(E/\K)$ we will make the construction symmetric in the three non-trivial
2-torsion points of $E$.

\begin{notation}
\label{orbnot}
Let $k$ be a field (later local or global) and $E$ an elliptic curve over $k$.
Write $S=E[2]\setminus\{O\}$, the set of non-trivial 2-torsion points
of $E/\bar k$. The symmetric group $\Sym_3$ acts on $S$. Denote the
irreducible representations of $\Sym_3$ by $\triv$ (trivial), $\epsilon$
(sign) and $\rho$ (2-dimensional).

For a group $G$ acting on $S$ (later a Galois group)
write $\Orb_{G}S$ for the set of $G$-orbits of $S$.
Abusing notation, we will write $P\in\Orb_{G}S$ for a representative of the
corresponding orbit, and $\triv, \epsilon, \rho$ for the restrictions
of these representations to $G$.
(Note that $\rho$ may become reducible.)

A point $P\in S$ gives rise to a 2-isogeny $\phi: E\to E/\{O,P\}$, and
we write $\sigma_\PP$ for $\sigma_{\phi}$ as defined in Notation \ref{sigmanot}.
\end{notation}

\begin{definition}
Let $\K$ be a local field of characteristic 0 and $E/\K$ an elliptic curve. Set
$\M=\K(\sqrt{\Delta_E})$, $\F=\K(E[2])$, $G=\Gal(\F/\K)$, $d=[\F:\K]$ and
\beq
  \m^{\triv\rho}(E/\K) &=& \displaystyle\prod_{P\in\Orb_{G}S} \sigma_\PP(E/\K(P)), \\[15pt]
  \m^{\triv\epsilon}(E/\K) &=& \displaystyle\prod_{P\in S}  \leftchoice
    {\sigma_\PPP(E/\F)}{2|d}{1}{2\nmid d}, \\[16pt]
  \m^{\triv\epsilon\rho}(E/\K) &=& \leftchoice
    {\smash{\hbox{$(-1)$}^{\ord_3 \frac{C(E/\F,\omega)}{C(E/\M,\omega)}}}}{d=6}
    {\hbox{$1\vphantom{1^X}$}}{d\ne 6,} \\[10pt]
  \m(E/\K) &=& \m^{\triv\rho}(E/\K)\m^{\triv\epsilon}(E/\K)\m^{\triv\epsilon\rho}(E/\K).
\eeq
\end{definition}

\begin{remark}
The $\m$ compute various
Selmer ranks of twists. If $E$ is defined over a number field $K$,
and $\triv, \epsilon$ and $\rho$ are the representations of
$G=\Gal(K(E[2])/K)\subset\Sym_3$ as in \ref{orbnot}, then
\begingroup
\def\v{{\smash{\hbox{\raise3pt\hbox{$\scriptstyle v$}}}}}
\beq
  \displaystyle\prod_\v \m^{\triv\rho}(E/K_v) &=&
    (-1)^{\blangle \triv\oplus\rho, \X2EF\brangle},\cr
  \displaystyle\prod_\v \m^{\triv\epsilon}(E/K_v) &=&
    (-1)^{\blangle \triv\oplus\epsilon, \X2EF\brangle},\cr
  \displaystyle\prod_\v \m^{\triv\epsilon\rho}(E/K_v) &=&
    (-1)^{\blangle \triv\oplus\epsilon\oplus\rho, \X3EF\brangle}.\cr
\eeq
\endgroup
This follows from the theorem below (combined with Corollary \ref{isogroot23}
and Theorem \ref{thms3glo}), although
it can also be deduced directly from Cassels' formula (Theorem \ref{cassels})
and Theorem \ref{s3rksel}.
\end{remark}

\begin{theorem}
\label{locmorloc}
Let $\K$ be a local field of characteristic zero, $E/\K$ an elliptic curve,
and $\triv, \epsilon, \rho$ the representations of
$G\!=\!\Gal(\K(E[2])/\K)$ as in~\ref{orbnot}.~Then
\beq
  &(1)&&\m^{\triv\rho}(E/\K) &=& w(E/\K)w(E/\K,\rho)\>(-1,-\Delta_E)_\K\,,\cr
  &(2)&&\m^{\triv\epsilon}(E/\K) &=& w(E/\K)w(E/\K,\epsilon)\>(-1,\Delta_E)_\K\,,\cr
  &(3)&&\m^{\triv\epsilon\rho}(E/\K) &=& w(E/\K)w(E/\K,\epsilon)w(E/\K,\rho)\,.\\[2pt]
\noalign{\noindent In particular,}
  &&&\vphantom{\int^{X^a}}\m(E/\K)&=&(-1,-1)_\K\> w(E/\K).
\eeq
\end{theorem}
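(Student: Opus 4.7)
The final identity $\m(E/\K) = (-1,-1)_\K\, w(E/\K)$ follows immediately by multiplying parts (1), (2), (3): $w(E/\K,\rho)^2 = w(E/\K,\epsilon)^2 = 1$ collapses two factors, and the Hilbert symbols combine via $(-1,-\Delta_E)_\K(-1,\Delta_E)_\K = (-1,-\Delta_E^2)_\K = (-1,-1)_\K$. So the work is to establish (1)--(3).

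Part (3): when $d=6$, inductivity of root numbers rewrites the right-hand side as $w(E/\K)w(E/\M)w(E/\L)$ (using $\triv\oplus\epsilon=\C[\Sym_3/\Cy_3]$ and $\triv\oplus\rho=\C[\Sym_3/\Cy_2]$), so the assertion is exactly Theorem~\ref{thms3loc}. For $d\in\{1,2,3\}$ the left-hand side equals $1$ by definition; a direct check using $\C[S]=\triv\oplus\rho$ shows that $\triv\oplus\epsilon\oplus\rho$ decomposes in $R_G$ as $4\triv$, $2\,\C[\Cy_2]$, or $\triv\oplus\C[\Cy_3]$ respectively, giving right-hand sides $w(E/\K)^4=1$, $w(E/\F)^2=1$, and $w(E/\K)w(E/\F)=w(E/\K)^2=1$, the last using $w(E/\K)=w(E/\F)$ for odd-degree extensions (\cite{Tamroot} Prop.~A.2(3)).

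For (1) and (2), apply Theorem~\ref{isogroot23thm} (the $p=2$ case of Conjecture~\ref{isogrootconj}) to each 2-isogeny $\phi_P\colon E\to E/\{O,P\}$: over $\K(P)$ for orbit representatives in (1), and over $\F$ for every $P\in S$ in (2) (when $2\mid d$). Each such application yields $\sigma_P(E/\K(P))=w(E/\K(P))\cdot H_P$, where $H_P$ is the Hilbert-symbol factor from Conjecture~\ref{isogrootconj}. Taking the product and invoking inductivity of root numbers via $\C[S]=\triv\oplus\rho$,
\[
  \m^{\triv\rho}(E/\K) \;=\; w(E/\K)\,w(E/\K,\rho)\cdot\!\!\prod_{P\in\Orb_G S}\!\! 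H_P,
\]
and analogously $\m^{\triv\epsilon}(E/\K)=w(E/\F)\prod_{P\in S}H_P^{\F}$ when $2\mid d$. It remains to identify each Hilbert-symbol product with $(-1,-\Delta_E)_\K$ or $(-1,\Delta_E)_\K$ respectively. The key algebraic simplification is that $a_P^2-4b_P=(e_{P'}-e_{P''})^2$ is a square in $\K$, so the second factor $(-2a_P,a_P^2-4b_P)_{\K(P)}$ of $H_P$ trivializes, leaving $H_P=(a_P,-b_P)_{\K(P)}$; an explicit manipulation using $\Delta_E=16\prod_{i<j}(e_i-e_j)^2$ and standard Hilbert-symbol identities then yields the formulae. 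For $d$ odd in (2), both sides are $1$, since $\Delta_E$ is a square in $\K$ and $\epsilon$ restricts to $\triv$.

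The main obstacle is bookkeeping the Hilbert symbols across the several subfields $\K(P)\subset\F$ that arise as $G$ ranges through the subgroups of $\Sym_3$; the case $G=\Sym_3$, in which $\K(P)$ is a genuine cubic extension, is the most delicate. A cleaner, uniform alternative --- in the spirit of \S\ref{skratun} and \S\ref{sisogroot} --- is a deformation argument: every term in (1)--(3) is $v_0$-adically locally constant, so one globalizes $\E/\K$ to an elliptic curve $E/K$ over a totally real field (Lemma~\ref{totreal}) with semistable reduction at all primes above $6$ other than $v_0$; the global product of the local identity of (1) then holds by Cassels' formula (Theorem~\ref{cassels}), 2-parity with a 2-isogeny (Corollary~\ref{isogroot23}), inductivity of root numbers, and Hilbert reciprocity, while the local identity is already known at every $v\ne v_0$ by Theorem~\ref{isogknown}, forcing it at $v_0$. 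Parts (2) and (3) follow by the same method, replacing or supplementing Corollary~\ref{isogroot23} with Theorem~\ref{thms3glo} and Theorem~\ref{thms3loc}.
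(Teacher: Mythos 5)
Your handling of part (3) matches the paper: both reduce $d=6$ to Theorem~\ref{thms3loc} and dispose of $d<6$ by observing that the restriction of $\triv\oplus\epsilon\oplus\rho$ is of the form $V\oplus V^*$, contributing trivially by self-duality. Your case-by-case Burnside decomposition is a mild variant of the same observation.

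In parts (1) and (2), however, you have two compensating mistakes that together conceal the real work. First, the step
$\prod_{P\in\Orb_G S} w(E/\K(P)) = w(E/\K,\triv\oplus\rho)$
is not what inductivity gives; the correct form (the paper's $(\Ind)$) carries an Artin root number correction $w(\Ind_{\K(P)/\K}\triv)^2$, so the product of induced root numbers acquires an extra factor $w(\triv\oplus\rho)^2$, which by the determinant formula equals $\bigl(\det(\triv\oplus\rho)\bigr)(-1)=\epsilon(-1)=(-1,\Delta_E)_\K$. Second, the Hilbert-symbol product you need is \emph{not} $(-1,-\Delta_E)_\K$: the paper's Lemma~\ref{lem-1-1} evaluates it as $(-1,-1)_\K$. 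Your asserted value is off by exactly the factor you dropped, so the two errors cancel and you land on the right formula --- but the derivation is wrong on both sides of the ledger, and if you tried to carry out the Hilbert-symbol computation you would be surprised by the mismatch. Moreover, your ``key algebraic simplification'' --- that $(-2a_P,a_P^2-4b_P)_{\K(P)}$ trivializes because $a_P^2-4b_P=(e_{P'}-e_{P''})^2$ is a square --- is only valid when the other two roots lie in $\K(P)$, i.e.\ when $\K(P)=\F$. It fails for $d=2$ with $P$ the $\K$-rational point and for $d=6$ where $\K(P)$ is a genuine cubic subfield of $\F$; these are exactly Cases~2 and~4 of Lemma~\ref{lem-1-1}, which require a real computation with norms and the five-term Hilbert-symbol identities. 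Establishing $\prod H_P=(-1,-1)_\K$ across all four Galois shapes of $\F/\K$ is where the paper does its work, and your sketch does not touch it. Your alternative deformation route does not bypass this either: Theorem~\ref{isogroot23thm} is already the output of a deformation argument, and to convert it into a statement about $\m^{\triv\rho}$ one must still evaluate the product of the three Hilbert-symbol factors, so Lemma~\ref{lem-1-1} (or an equivalent) is unavoidable.
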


\begin{proof}
Write $\F=\K(E[2])$. We will use $\Ind_{\L/\K}$ as a
shorthand for $\Ind_{\Gal(\F/\L)}^{\Gal(\F/\K)}$.
Recall that
local root numbers satisfy `self-duality', `inductivity in degree 0'
and the `determinant formula':
for a representation $\tau$ of $G$ and $\K\subset\L\subset\F$,
$$
  w(E/\K,\tau)=\overline{w(E/\K,\tau^*)},\qquad\qquad\qquad\qquad\>\> \eqno{(*)}
$$
$$
  w(E/\L)=w(\Ind_{\L/\K}\triv)^2\,w(E/\K,\Ind_{\L/\K}\triv),  \eqno{(\Ind)}
$$
$$
  w(\tau\oplus\tau^*) = (\det\tau)(-1),\qquad\qquad\qquad\qquad \eqno{(\det)}
$$
where $(\det\tau)(-1)$ is the character $\det\tau$
evaluated at the image of $-1$ under the local reciprocity map.
These are all well-known properties of root numbers;
see e.g. \cite{Tamroot} App. A.

\medskip
\noindent
(1) Choose a Weierstrass model $y^2=f(x)$ for $E/\K$. For $P=(r,0)\in S$,
define $a_\PP, b_\PP\in \K(P)$ by $f(x+r)=x^3+a_\PP x^2+b_\PP x$.
Note that
$$
  1\oplus\rho = \bigoplus_{P\in\Orb_G S} \Ind_{\K(P)/\K}\triv.
$$
Taking all products below over $P\in\Orb_{G}S$, we have
\beq
  \m^{\triv\rho}(E/\K) &=& \displaystyle\prod \sigma_\PP(E/\K(P))\cr
    &\citeeqref{isogroot23thm}& \displaystyle \prod w(E/\K(P))\>
      \leftchoice{(a_\PPP,-b_\PPP)_{\KKP}(-2a_\PPP,a_\PPP^2-4b_\PPP)_{\KKP}}{a_\PPP\ne 0}
        {(-2,-b_\PPP)_{\KKP}}{a_\PPP=0}
      \cr
    &\citeeqref{lem-1-1}& \displaystyle (-1,-1)_\K\> \prod w(E/\K(P))\cr
    &\rlap{$\!\!\!\!\!\citeeq{(\Ind)}$}& \displaystyle (-1,-1)_\K\> \prod w(\Ind_{\K(P)/\K}\triv)^2\, w(E/\K,\Ind_{\K(P)/\K}\triv)    \cr
    &=& \displaystyle (-1,-1)_\K\> w(\triv\oplus\rho)^2 w(E,\triv\oplus\rho)   \cr
    &\rlap{$\!\!\!\!\!\citeeq{(\det)}$}&
    \displaystyle (-1,-1)_\K\> \epsilon(-1) w(E,\triv\oplus\rho). \cr
\eeq
(2)
If $[\F:\K]$ is odd, then $\triv=\epsilon$ and the statement trivially holds.
Otherwise, by (Ind)
the right-hand side of the asserted formula is $w(E/\M)$,
where $\M=\K(\sqrt{\Delta_E})$ is the quadratic extension of $\K$ in $\F$.
This is the same as
$w(E/\F)$ because $\Gal(\F/\M)$ is either $\Cy_1$ or $\Cy_3$
(use (Ind), ($*$) and (det)). % or \cite{KT}, proof of Prop. 3.4).
On the other hand, by Theorem \ref{isogroot23thm} and Lemma \ref{lem-1-1} the
left-hand side is
$w(E/\F)^3(-1,-1)_\F$. This is indeed $w(E/\F)$, as
$[\F:\K]$ is even and so $(-1,-1)_\F=1$ by Remark \ref{rem-1-1} below.

(3) If $[\F:\K]<6$, then the left-hand side is trivial, and the right-hand side
is of the form $w(E/\K,V\oplus V^*)$, which is also 1 by ($*$).
If $[\F:\K]=6$,
let $\M$ and $\L$ be a quadratic and a cubic extension of $\K$ in $\F$. Then
$$
  \m^{\triv\epsilon\rho}(E/\K) = (-1)^{\ord_3\frac{C(E/\F)}{C(E/\M)}}
    \>\>\citeeqref{thms3loc}\>\> w(E/\K)w(E/\M)w(E/\L)
$$
$$
  \citeeq{(\Ind)}
  w(E,\triv\oplus\epsilon\oplus\rho) w(\triv\oplus\epsilon\oplus\rho)^2
  \citeeq{(\det)}
  w(E,\triv\oplus\epsilon\oplus\rho).
$$
\end{proof}

\begin{remark}
\label{rem-1-1}
Recall that $(-1,-1)_\K$ measures whether the algebra of Hamiltonian
quaternions $\Q\langle 1,i,j,k\rangle$ is split or ramified over $\K$, so
$$
(-1,-1)_\K=\left\{\begin{array}{ll}
(-1)^{[\K:\R]}   &  \text{if $\K$ is Archimedean,}\cr
1 &                 \text{if $\K$ has odd residue characteristic,}\cr
(-1)^{[\K:\Q_2]} &  \text{if $\K$ has residue characteristic 2.}\cr
\end{array}
\right.
$$
In particular, Theorem \ref{locmorloc} directly
implies Theorem \ref{ithmloc}.
\end{remark}

The following lemma was used in the proof of the theorem:

\begingroup

\def\alpha{r}
\def\beta{s}
\def\gamma{t}

\begin{lemma}
\label{lem-1-1}
Suppose $\K$ is a local field of characteristic 0, and $f(x)\in \K[x]$
is monic of degree 3. Let $\alpha_1,\alpha_2,\alpha_3$ be its roots in $\bar \K$, and
$\F=\K(\alpha_1,\alpha_2,\alpha_3)$.
For $\alpha\in\{\alpha_1,\alpha_2,\alpha_3\}$ write $f(x+\alpha)=x^3+a_\alpha x^2+b_\alpha x$. Then
$$
  \prod_\alpha
  \leftchoice{(a_\alpha,-b_\alpha)_{\KKR}(-2a_\alpha,a_\alpha^2-4b_\alpha)_{\KKR}}
     {a_\alpha\ne 0}{(-2,b_\alpha)_{\KKR}}{a_\alpha=0}
   \quad=\quad (-1,-1)_\K,
$$
the product taken over some representatives of
the orbits of $G=\Gal(\F/\K)$ on $\{\alpha_1,\alpha_2,\alpha_3\}$.
\end{lemma}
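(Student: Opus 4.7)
My proof will be by case analysis on $G=\Gal(\F/\K)\subseteq\Sym_3$, where $\F$ is the splitting field of $f$. Two algebraic identities drive the computation: writing $\alpha_r=r-r_j$ and $\beta_r=r-r_k$ for the two other roots, one has $a_r=\alpha_r+\beta_r$, $b_r=\alpha_r\beta_r$, and $a_r^2-4b_r=(\alpha_r-\beta_r)^2=(r_j-r_k)^2$; and the Hilbert-symbol identity $(u,v)=(-uv,u+v)$ (a reformulation of Steinberg's $(u/(u+v),v/(u+v))=1$) gives, in any field containing both $\alpha_r$ and $\beta_r$, the clean equality $(a_r,-b_r)=(\alpha_r,\beta_r)$. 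Moreover, the second factor $(-2a_r,(r_j-r_k)^2)_{\K(r)}$ of $h$ is trivial whenever $r_j-r_k\in\K(r)$.

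The heart is the case $G=1$, when $f$ splits over $\K$. All $\alpha_i,\beta_i$ lie in $\K$, the second factors vanish, and the product reduces to $\prod_{i=1}^3(\alpha_i,\beta_i)_\K$. Setting $u=r_1-r_2$ and $v=r_1-r_3$, this expands as $(u,v)(-u,v-u)(-v,u-v)$; a bilinear manipulation using the Steinberg consequence $(u,v-u)(v,v-u)=(u,v)(v,-1)$ (derived from $(u/v,(v-u)/v)=1$) collapses it to $(-1,-1)_\K$. For $G\cong\Cy_3$, the unique orbit representative $r$ satisfies $\K(r)=\F$, so $\alpha_r,\beta_r,r_j-r_k$ all lie in $\F$ and $h(a_r,b_r)=(\alpha_r,\beta_r)_\F$. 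Applying the $G=1$ case inside $\F$ together with Galois-equivariance of the Hilbert symbol (which forces the three conjugate factors to coincide in $\{\pm1\}$) gives $(\alpha_r,\beta_r)_\F^3=(-1,-1)_\F$, hence $(\alpha_r,\beta_r)_\F=(-1,-1)_\F=(-1,-1)_\K$, the final equality because odd-degree base change preserves the splitting of the quaternion algebra $(-1,-1)_\K$.

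The remaining cases $G\cong\Cy_2$ and $G\cong\Sym_3$ feature an orbit representative $r$ with $\alpha_r,\beta_r\notin\K(r)$, so the identity $(a_r,-b_r)=(\alpha_r,\beta_r)$ only becomes available after base-change to $\F=\K(r)(\sqrt{D})$ with $D=(r_j-r_k)^2$. The projection formula together with $N_{\F/\K(r)}(\alpha_r)=\alpha_r\beta_r=b_r$ yields $(\alpha_r,\beta_r)_\F=(\alpha_r,-b_r)_\F=(b_r,-b_r)_{\K(r)}=1$, so $(a_r,-b_r)_{\K(r)}$ lies in $\ker(\Res_{\K(r)\to\F})$ and therefore equals $(D,y)_{\K(r)}$ for some $y\in\K(r)^*$. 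Combining this with the second factor $(-2a_r,D)_{\K(r)}$ collapses $h$ into a single symbol $(D,-2a_r y)_{\K(r)}$, which after corestriction to $\K$ (and, in the $\Cy_2$ case, combined with the ``easy'' second-orbit contribution $(\alpha_2,\beta_2)_\F$ coming from the quadratic orbit) must then be matched against $(-1,-1)_\K$.

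The main obstacle will be executing this last step cleanly. One must simultaneously juggle symbols in the three nested fields $\K\subset\K(r)\subset\F$, identify the precise class $y\in\K(r)^*$ for which $(a_r,-b_r)_{\K(r)}=(D,y)_{\K(r)}$, and verify that the combined corestricted symbol yields exactly $(-1,-1)_\K$ and not a twisted variant --- something that cannot be read off from the $G=1$ computation over $\F$ alone, since that only pins the symbols down modulo $\ker(\Res_{\K\to\F})$. Everything else should be routine Hilbert-symbol bookkeeping.
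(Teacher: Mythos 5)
Your treatment of the $G=\{1\}$ and $G\cong \Cy_3$ cases is correct and matches the paper's argument in substance: the collapse $(a_\alpha,-b_\alpha)=(\alpha_\alpha,\beta_\alpha)$ via $(u+v,-uv)=(u,v)$, the sum-to-zero Hilbert-symbol identity for the triple product, and the reduction of the cubic cyclic case via Galois-equivariance and the fact that an odd-degree extension preserves the class of $(-1,-1)$ are all exactly what the paper does in its Cases 1 and 3.

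However, there is a genuine gap in the $\Cy_2$ and $\Sym_3$ cases, and the strategy you sketch for filling it is not sound. Your key step is to deduce that $(a_\alpha,-b_\alpha)_{\K(\alpha)}$ lies in $\ker\bigl(\Res_{\K(\alpha)\to\F}\bigr)$ (since its restriction to $\F$ becomes $(\alpha_\alpha,\beta_\alpha)_\F=1$) and therefore is of the form $(D,y)_{\K(\alpha)}$. But over a non-Archimedean local field, \emph{every} quaternion algebra splits over \emph{every} quadratic extension (restriction multiplies the Brauer invariant by $2$), so $\ker\bigl(\Res_{\K(\alpha)\to\F}\bigr)$ is the full $2$-torsion $\{\pm 1\}$, and the map $y\mapsto(D,y)_{\K(\alpha)}$ is surjective onto it. The ``lies in the kernel'' observation therefore carries no information, and the task of pinning down $y$ is not a small side issue but the entire content of the computation — it is exactly the hard part you defer to ``routine bookkeeping.'' The paper instead handles $\Cy_2$ by an explicit hands-on expansion in the two root-differences $s=\alpha_2-\alpha_1$, $t=\alpha_3-\alpha_1$, applying the projection formula $(a,Nb)_\K=(a,b)_\F$ for specific norms like $N(2s)=4st$ and $N(s-t)=-(s-t)^2$ and then simplifying; and it handles $\Sym_3$ by invoking the formula over the quadratic and over a cubic intermediate field of $\F/\K$ and multiplying, reducing it to the already-proved $\Cy_2$ and $\Cy_3$ cases. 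Your proposal never reaches a computation of comparable concreteness, so the $\Cy_2$ (and hence $\Sym_3$) case remains unproved.
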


\begin{proof}
In view of Lemma \ref{abcont}, we can assume that all $a_r\ne 0$.
Write $N$ for the norm map from $\F$ to $\K$.
Recall that the Hilbert symbol $(,)=(,)_\F$ is bilinear, symmetric,
and satisfies
\beq
  (a,1-a)=(a,-a)=1, \cr
  (a+b,-ab)=(1+\tfrac ba,-\tfrac ba)(a,-ab)=(a,b),\cr
  (a,b)(b,c)(a,c)(-1,abc)=(-1,-1) & \text{if }a+b+c=0,\cr
  (a,-b)(b,-c)(c,-a)=(-1,-1) & \text{if }a+b+c=0,\cr
  (a,b)(-a,-c)(b,-c)=1, & \text{if }a+b+c=0,\cr
 (a,Nb)_\K = (a,b) & \text{for }a\in \K, b\in \F,
\eeq
whenever the constituents are non-zero.

Case 1: $\F=\K$.
Say $\alpha=\alpha_1$. Then $a_\alpha^2-4b_\alpha=(\alpha_2-\alpha_3)^2$ is a square in $\K$, so
the second Hilbert symbol is trivial. The first one is
$$
  \bigl((\alpha_1-\alpha_2)+(\alpha_1-\alpha_3),-(\alpha_1-\alpha_2)(\alpha_1-\alpha_3)\bigr)
     =
  (\alpha_1-\alpha_2,\alpha_1-\alpha_3),
$$
and similarly for $\alpha=\alpha_2$ and $\alpha_3$. The total product is
\beq
  (\alpha_1-\alpha_2,\alpha_1-\alpha_3) \cdot (\alpha_2-\alpha_1,\alpha_2-\alpha_3) \cdot (\alpha_3-\alpha_1,\alpha_3-\alpha_2) = %\cr
  (-1,-1).
\eeq

Case 2: $\alpha_1\in \K, \alpha_2,\alpha_3\notin \K$,
and the product is taken over $\alpha\in\{\alpha_1,\alpha_2\}$.
Let $\beta=\alpha_2-\alpha_1, \gamma=\alpha_3-\alpha_1$, and expand
\beq
  f(x+\alpha_1)=x(x-\beta)(x-\gamma)=x^3+(-\beta-\gamma)x^2+\beta\gamma x,\cr
  f(x+\alpha_2)=x(x+\beta)(x+\beta-\gamma)=x^3+(2\beta-\gamma)x^2+(\beta^2-\beta\gamma)x.
\eeq
The total product of Hilbert symbols is
\beq
  (\!-\!\beta\!-\!\gamma,\!-\!\beta\gamma)_\K \cdot (2\beta\!+\!2\gamma,(\beta\!-\!\gamma)^2)_\K \cdot (\beta\!+\!(\beta\!-\!\gamma),\!-\!\beta(\beta\!-\!\gamma)) \cdot (\!-\!4\beta\!+\!2\gamma,\gamma^2)=\cr
  (\!-\!\beta\!\!-\!\!\gamma,\!-\!\tfrac 14)_\K (\!-\!\beta\!\!-\!\!\gamma,N(2\beta))_\K \cdot
    (2\beta\!+\!2\gamma,\!-\!1)_\K(2\beta\!+\!2\gamma,N(\beta\!\!-\!\!\gamma))_\K \cdot (\beta,\beta\!\!-\!\!\gamma) \cdot 1 = \cr
  (-\beta-\gamma,-1)_\K (-\beta-\gamma,2\beta)
   (2,-1)_\K (\beta+\gamma,-1)_\K (2\beta+2\gamma,\beta-\gamma) (\beta,\beta-\gamma) = \cr
  (-1,-1)_\K (-\beta-\gamma,2\beta)(\beta+\gamma,\beta-\gamma)(2,\beta-\gamma)(\beta,\beta-\gamma) = \cr
  (-1,-1)_\K (-\beta-\gamma,2\beta)(\beta+\gamma,\beta-\gamma)(2\beta,\beta-\gamma) = (-1,-1)_\K. \cr
\eeq

Case 3: $[\F:\K]=3$. The claim follows from that for $f$ over $\F$,
because $(-1,-1)_\K=(-1,-1)_\F$ and the terms in the left-hand side
of the formula also remain the same for each $r$.

Case 4: $[\F:\K]=6$. Invoke the formula over the quadratic and over a cubic
intermediate field of $\F/\K$, and multiply the two.
\end{proof}

\endgroup

\end{document}